\newtheorem{thm}{Theorem}
\newtheorem{lem}[thm]{Lemma}
\newtheorem{prop}[thm]{Proposition}
\theoremstyle{definition}
\newtheorem{defn}[thm]{Definition}
\theoremstyle{remark}
\numberwithin{equation}{section}
\newcommand{\To}{\longrightarrow}
\begin{document}
\setcounter{tocdepth}{1}


\title[]{Compact spaces of the first Baire class that have open finite degree}
\author{Antonio Avil\'es and Stevo Todorcevic}
\address{Universidad de Murcia, Departamento de Matem\'{a}ticas, Campus de Espinardo 30100 Murcia, Spain.} \email{avileslo@um.es}
\address{Institut de Math\'{e}matiques de Jussieu, CNRS UMR 7586, Case 247, 4 place Jussieu, 75252 Paris Cedex, France. Department of Mathematics, University of Toronto, Toronto, Canada, M5S 3G3.}%
\email{stevo@math.jussieu.fr, stevo@math.toronto.edu}
\thanks{First author supported by MINECO and FEDER (MTM2014-54182-P) and by Fundaci\'{o}n S\'{e}neca - Regi\'{o}n de Murcia (19275/PI/14). Second author partially supported by grants from NSERC and CNRS}

\begin{abstract}
We introduce the open degree of a compact space, and we show that for every natural number $n$, the separable Rosenthal compact spaces of degree $n$ have a finite basis.
\end{abstract}

\maketitle

\section{Introduction}

A compact space is called a Rosenthal compactum if it is homeomorphic to a pointwise compact set of first Baire class functions on a Polish space. The study of this class arose in connexion with Banach space theory, and particularly with Banach spaces not containing $\ell_1$ \cite{OdeRos}. We refer to the survey~\cite{Debs} for more information on the subject. 

\begin{thm}\label{basisof3}
There exist three separable non-metrizable Rosenthal compact spaces such that every separable non-metrizable Rosenthal compact space contains a homeomorphic copy of one of these three.
\end{thm}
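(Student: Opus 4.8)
The plan is to deduce this three-element basis from the finite-basis theorem for the classes of fixed open degree (the result announced in the abstract), by showing that the truly minimal members of the whole class already live in the lowest strata of the degree hierarchy. First I would fix a separable non-metrizable Rosenthal compactum $K$, realized as a pointwise-compact $K \subseteq \mathcal{B}_1(X)$ with $X$ Polish, together with a countable dense sequence $(f_n)$. By the Bourgain--Fremlin--Talagrand theorem $K$ is Fr\'echet--Urysohn and angelic, so its topology is entirely recovered from convergent subsequences of $(f_n)$. Since $K$ is not metrizable it is not second countable, so the traces $\{n : f_n \in U\}$ of basic neighbourhoods produce an \emph{uncountable} family of distinct local `limit types'; this uncountable combinatorial family, together with its almost-inclusion/tree structure on $\mathbb{N}$, is the object that carries all the non-metrizable information and that the argument must canonize. (Note that one cannot reduce to a non-$G_\delta$ point: the split interval is first countable yet non-metrizable, so the witness of non-metrizability is uncountable weight, encoded in the trace family, rather than local character.)

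Second, I would reduce to finitely many degree strata. The open degree, as defined earlier, quantifies the local complexity of the way the dense set accumulates, and finite open degree is the relevant tameness condition. The key structural step is to pass to a closed subspace $K' \subseteq K$ that is still non-metrizable but has \emph{minimal} open degree among non-metrizable closed subspaces; a Cantor--Bendixson-type derivative on the accumulation structure, controlled by angelicity, should show that this minimal degree is finite (indeed small). Granting this, the finite-basis theorem for fixed degree supplies a finite basis $\mathcal{B}_n$ of each degree-$n$ class, and Theorem~\ref{basisof3} is reduced to two tasks: bounding the degree attained by minimal non-metrizable subspaces, and identifying the finitely many minimal spaces that actually occur, checking that their number is exactly three.

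The heart of the argument is the canonization of the uncountable trace family. Working in the Ellentuck space and applying parametrized Ramsey theory, I would stabilize the way the approaching subsequences split and recombine around the accumulation structure; after passing to an infinite subset of $\mathbb{N}$ and a homeomorphic recoordinatization, the possible \emph{stable} patterns of an uncountable accumulation to a minimal non-metrizable subspace form a short finite list. I expect three surviving patterns, each realized as a genuine closed copy of one explicit model compactum built over $2^{<\mathbb{N}}$ (or $[0,1]$): a split-interval type space, together with two further `tree' models distinguished by the geometry of the splitting (one-sided versus two-sided). The final step is to verify that each stabilized combinatorial copy is truly \emph{homeomorphic} to its model---closed in the pointwise topology, with precisely the prescribed convergence structure and no spurious limit points---by transporting the pointwise topology through the stabilized traces.

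The main obstacle is this canonization together with its topological faithfulness. The difficulty is to run the Ramsey-theoretic stabilization and the topological control \emph{simultaneously}: a subset of $\mathbb{N}$ that canonizes the traces may well destroy compactness or create extra accumulation points, so the infinite set must be chosen so that the pointwise closure of the selected dense subsequence is exactly the model space. Proving that only three minimal patterns survive---and that higher open degrees yield nothing genuinely new, i.e.\ that any minimal configuration of large degree already contains one of the three models as a closed subspace---is where the per-degree bases $\mathcal{B}_n$ must be shown to collapse, after bookkeeping and elimination of redundancies, to a single list of length three.
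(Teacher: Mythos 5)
Your overall instinct---to obtain the three-space basis as the lowest case of the degree-stratified finite-basis theorem---is the right one, and it is exactly how this paper views Theorem~\ref{basisof3} (historically the theorem is from \cite{1BC}; here it is recovered as the case $n=2$ of Theorem~\ref{finitebasis}). But your execution contains a genuine gap, namely your entire second step. You propose to pass to a closed non-metrizable subspace $K'\subseteq K$ of minimal open degree and then to prove that this minimal degree is finite via ``a Cantor--Bendixson-type derivative controlled by angelicity''. No such argument is available: for a non-scattered $K$ the Cantor--Bendixson derivative gives no information, and the assertion that some non-metrizable closed subspace has finite open degree is not easier than the basis theorem itself---it is essentially what the canonization over the tree (Lemmas~\ref{findseparatedK} and~\ref{findstandardK}) exists to prove. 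As written, this reduction is either unproved or circular.

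Moreover, the step is unnecessary, because you have misread the quantifier in Theorem~\ref{finitebasis}: it applies to every separable Rosenthal compactum of open degree $\geq n$ (finite or infinite), not only to those of degree exactly $n$. The actual deduction is short: a nonempty compact space satisfies $odeg(K)=1$ if and only if it is metrizable, so a non-metrizable separable Rosenthal compactum has $odeg(K)\geq 2$; Theorem~\ref{mainthm} with $n=2$ then yields a homeomorphic copy of $K_\infty(\mathfrak{Q}_2)$ or of $K_1(\mathfrak{P}_{f^\alpha})$ for some dense strong type $\alpha$ with $|\alpha|=2$; and the enumeration in Section~\ref{sectionlow} shows that, up to permutation, there are exactly two such $\alpha$, giving the three spaces $K_1(\mathfrak{P}_2^0)$, $K_1(\mathfrak{P}_2^1)$ and $K_\infty(\mathfrak{Q}_2)$ (a split-interval type space, the separable Alexandroff duplicate of the Cantor set, and the separable one-point compactification of a discrete set of size continuum). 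In particular, your worries about ``collapsing the per-degree bases'' and about minimal configurations of large degree evaporate: the $n=2$ statement already covers all degrees $\geq 2$, including infinite degree. Finally, note that your proposed canonization ``in the Ellentuck space'' over traces on $\mathbb{N}$ amounts to re-proving the main theorem rather than using it, and it is not the right Ramsey framework in any case: the paper's canonization takes place on the $m$-adic tree via the Milliken-type partition theorem (Theorem~\ref{strongRamsey}) combined with an infinite game argument (Lemma~\ref{findseparatedK}), because the objects that must be stabilized are combs and double combs in $n^{<\omega}$, not subsets of $\mathbb{N}$.
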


The three critical examples identified in \cite{1BC} are the split interval (also known as double arrow space), the Alexandroff duplicate of the Cantor set and the one-point compactification of the discrete set of size continuum. The two latter spaces are not separable, but there is a natural way to supplement them with countably many isolated points to make them separable Rosenthal compacta, and in this way we obtain the basis of three spaces of Theorem~\ref{basisof3}. Although not explictly mentioned in \cite{1BC}, these separable versions arise in the proofs. We can refer also to~\cite{ADK}, where the authors approach this topic emphasizing the role of dense sets indicated in the dyadic tree, and the basis of separable spaces is explicitly described. The main result of this paper is Theorem~\ref{finitebasis}, a multidimensional generalization of Theorem~\ref{basisof3}, that relies on the following topological index:

\begin{defn}\label{defodeg}
For a compact space $K$, the open degree of $K$, $odeg(K)$, is the least natural number $n$ such that there exists a countable family $\mathcal{F}$ of open sets such that for every different $x_0,\ldots,x_n\in K$ there exists $V_0,\ldots,V_n\in \mathcal{F}$ such that $x_i\in V_i$ and $\bigcap_i V_i = \emptyset$. If no such number $n$ exists, then $odeg(K) = \infty$.
\end{defn}

\begin{thm}\label{finitebasis}
For every natural number $n\geq 2$, there exists a finite list of separable Rosenthal compact spaces of open degree $n$ such that every separable Rosenthal compact space of open degree $\geq n$ contains a homeomorphic copy of one of the spaces of the list.
\end{thm}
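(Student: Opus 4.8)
The plan is to read the open degree off a canonical coding of $K$, convert the inequality $odeg(K)\ge n$ into a combinatorial pattern, canonize that pattern by Ramsey-theoretic means, and take Theorem~\ref{basisof3} as the case $n=2$. First I would represent a separable Rosenthal compactum $K$ in the style of the dyadic-tree codings of \cite{ADK}: realize $K$ as the pointwise closure of a countable set $D=\{f_t:t\in 2^{<\omega}\}$ of functions of the first Baire class on a Polish space $\Gamma$, with the non-isolated points arising as limits along branches of $2^{<\omega}$, and with a canonical countable family of basic open sets coming from evaluation conditions on a countable coordinate set $\Gamma_0\subseteq\Gamma$.

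Within this coding I would show that $odeg(K)\ge n$ is equivalent to the existence of a definable \emph{bad $n$-pattern}: a tree-indexed family of pairwise distinct $n$-tuples whose coordinates no fixed countable family of open sets can pull apart into sets of empty $n$-fold intersection. Unravelling Definition~\ref{defodeg} gives the implication from such a pattern to $odeg(K)\ge n$ at once; for the converse I would amalgamate the countably many candidate separating families into a single one and use a compactness argument to produce the defeating $n$-tuple, which is exactly the point where first countability and the Baire-class-$1$ structure are used.

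Then I would canonize. Feeding into the parametrized analytic Ramsey machinery behind Theorem~\ref{basisof3} the coloring that records how the coordinates of each finite configuration cluster and split, I expect to stabilize the bad $n$-pattern to one of finitely many normal forms on a subtree. Each normal form is realized topologically by a concrete closed subspace of $K$, and I would check that these subspaces are exactly the members of an explicit finite list of Rosenthal compacta of open degree $n$, obtained from the three critical degree-$2$ examples by the natural operations raising the multiplicity of the non-metrizable clustering from $2$ to $n$, together with the mixed intermediate types. The finiteness of the list is precisely the finiteness of the set of normal forms.

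Finally I would verify the two numerical endpoints for each listed space $L$: the lower bound $odeg(L)\ge n$ from its defining $n$-cluster pattern, and the upper bound $odeg(L)\le n$ by exhibiting a countable family that separates each $(n{+}1)$-tuple into open sets of empty intersection; monotonicity of the open degree under closed subspaces then confirms that any $K$ containing a copy of some $L$ has $odeg(K)\ge n$, closing the equivalence. I expect the main obstacle to be the canonization step itself: guaranteeing that the Ramsey analysis of the $n$-dimensional clustering terminates in \emph{finitely many} normal forms --- this is where the passage from the three one-dimensional examples to general $n$ is most delicate --- and that shrinking to a subtree preserves the first-Baire-class structure, so that the limiting subspace is again a Rosenthal compactum of the intended degree. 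Organizing the whole argument by induction on $n$, with the degree-$(n-1)$ classification governing the behaviour below the top cluster, should control both the construction of the list and its completeness.
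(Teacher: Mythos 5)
Your outline reproduces the general shape of the argument (a tree-indexed dense family, Ramsey canonization on a tree, a finite list of normal forms realized as subspaces), but it has a genuine gap at the single most critical step: converting $odeg(K)\ge n$ into a tree-indexed structure inside $K$. Your plan is to ``amalgamate the countably many candidate separating families into a single one and use a compactness argument to produce the defeating $n$-tuple.'' There are continuum many countable families of open sets, not countably many, so there is nothing to amalgamate; and even granting one defeating $n$-tuple for each fixed family, the theorem needs a single tree-indexed family $d:n^{<\omega}\To D$ of dense points satisfying the coherent separation property
$$\overline{\{d[t] : t^\frown i \leq z\}} \cap \overline{\{d[t] : t^\frown j \leq z\}} = \emptyset \qquad (i<j<n,\ z\in n^{\omega}),$$
that is, one object that defeats \emph{all} countable families simultaneously. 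A pointwise diagonalization over families cannot produce this. The paper's solution (Lemma~\ref{findseparatedK}) is an infinite game: Player I plays dense points together with finite approximations to branches, Player II plays labels in $n$; the payoff set is Borel, so Martin's determinacy applies, and a winning strategy for Player II is shown to yield a countable family witnessing $odeg(K)<n$, a contradiction. Hence Player I has a winning strategy, and running it against all plays of Player II produces exactly the family above. This game/determinacy argument is the paper's key new technique, and nothing in your proposal substitutes for it.

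Second, the finiteness of the list is not a formal consequence of Ramsey canonization, and you flag it yourself as the main obstacle without resolving it. In the paper, canonization (the Milliken-type Theorem~\ref{strongRamsey}, applied to the coanalytic family of index sets of convergent sequences, and working on the $n$-adic tree $n^{<\omega}$ rather than your dyadic tree) yields that the closure of the canonized family is one of the concrete spaces $K_1(\mathfrak{P})$, for a partition $\mathfrak{P}$ of $n\times n$ with $|\mathfrak{P}|\geq n$, or $K_\infty(\mathfrak{Q})$ with $\mathfrak{Q}$ essentially all singletons (Lemma~\ref{findstandardK}). But to get from there to a list of spaces of open degree exactly $n$, independent of $K$, one still needs the reduction theory imported from the classification of minimal analytic strong $n$-gaps: Lemmas~\ref{fprecg}, \ref{nsubset} and~\ref{minimal} reduce $K_1(\mathfrak{P})$ to some $K_1(\mathfrak{P}_{f^\alpha})$ with $\alpha$ a strong-dense-type, $|\alpha|=n$, and the lower bounds $odeg(K_1(\mathfrak{P}_{f^\alpha}))=n$ and $odeg(K_\infty(\mathfrak{Q}_n))=n$ rest on the fact that comb families in $m^{<\omega}$ are not countably separated (Lemma~\ref{notcountablyseparated}). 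Your ``natural operations raising the multiplicity from $2$ to $n$'' and the proposed induction on $n$ engage with none of these ingredients; in particular, the paper's proof is not an induction on $n$ and does not pass through the degree-$(n-1)$ classification at all.
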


It is an easy excercise that $odeg(K)= 1$ if and only if $K$ is metrizable, so for $n=2$, the list of Theorem~\ref{finitebasis} is the same as in Theorem~\ref{basisof3}. For higher numbers $n$, the list of compact spaces can also be explicitly described, and it is related to the list of minimal analytic strong $n$-gaps found in \cite{stronggaps}. Indeed, each compact space in the list of Theorem~\ref{finitebasis} corresponds to a \emph{dense} strong minimal analytic $n$-gap modulo permutations. These compact spaces are higher-dimensional analogues of the three critical spaces of Theorem~\ref{basisof3}, constructed using general $m$-adic trees $m^{<\omega}$ instead of  the dyadic tree $2^{<\omega}$. Our proof uses some of the ideas from \cite{1BC} but it also introduces some new techniques in the area, like the use of infinite games.

The paper is organized as follows: Section~\ref{sectionbiseq} collects well known or more or less elementary facts on Rosenthal compacta and the property of bisequentiality.  Section~\ref{sectiontree} introduces notation and facts that we shall need about the $n$-adic tree, that are taken from our previous work \cite{stronggaps,IHES,serbiansurvey}.  Sections~\ref{sectionK1} and \ref{sectionKinfty} describe the basic compact spaces $K_1(\mathfrak{P})$ and $K_\infty(\mathfrak{Q})$. In Section~\ref{sectionmin} we check when a space $K_1(\mathfrak{P})$ can be homeomorphic to a subspace of a space $K_1(\mathfrak{P}')$ and, using results from \cite{stronggaps}, the minimal spaces of this form are identified. 

Section~\ref{sectionopen} studies the open degree, and another related degree. It is proven in \cite{APT} that, for this other notion of degree, every separable Rosenthal compact space of degree $n$ contains one of two basic spaces. The differences with the results of this paper are discussed. Theorem~\ref{finitebasis} is finally proven in Section~\ref{sectionmain}, and Section~\ref{sectionlow} computes the minimal spaces for $n=2,3,5$ as illustration . Section~\ref{sectionclass} contains some further results like:

\begin{thm}
If $K$ is a non scattered Rosenthal compact space, then either $K$ contains a homeomorphic copy of the Cantor set or a homeomorphic copy of the split interval.
\end{thm}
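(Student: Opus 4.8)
The plan is to separate the metrizable and non-metrizable behaviour of a crowded subspace, reducing the non-metrizable case to the basis Theorem~\ref{basisof3}. First, since $K$ is non scattered, its Cantor--Bendixson analysis does not terminate, so the perfect kernel $P=\bigcap_\alpha K^{(\alpha)}$ is a nonempty closed set with no isolated points; being closed in a Rosenthal compactum, $P$ is itself Rosenthal. If $P$ is metrizable, the classical construction of a Cantor scheme $(U_s)_{s\in 2^{<\omega}}$ of nonempty open sets with $\overline{U_{s0}}\cup\overline{U_{s1}}\subseteq U_s$, $\overline{U_{s0}}\cap\overline{U_{s1}}=\emptyset$ and $\mathrm{diam}(U_s)\to 0$ along branches produces a homeomorphic copy of the Cantor set inside $P$, and we are done. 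We may therefore assume that $P$ is non metrizable and, arguing by contradiction, that $K$ (hence $P$) contains no copy of the Cantor set; the goal becomes to locate a split interval.

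Without the control on diameters I would still build a Cantor scheme $(U_s)_{s\in 2^{<\omega}}$ inside $P$ with nested, pairwise disjoint sibling closures, using only regularity and the absence of isolated points. Setting $L=\bigcap_n\bigcup_{|s|=n}\overline{U_s}$ gives a closed, hence Rosenthal, subspace together with a continuous surjection $\pi\colon L\to 2^{\omega}$ onto the Cantor set, whose fibres are the compacta $P_b=\bigcap_n\overline{U_{b\restriction n}}$. If the scheme can be chosen so that every fibre is a singleton, then $\pi$ is a homeomorphism and $L$ is a Cantor set, contrary to our assumption; so the non-metrizability must persist in the fibres. I would then use the angelic, Fr\'echet--Urysohn character of Rosenthal compacta to pass to a \emph{separable}, crowded, non-metrizable closed subspace $Y\subseteq P$, by selecting along the scheme a countable family of points whose closure is crowded and of uncountable weight, so that the defect preventing a Cantor set is concentrated on a separable piece.

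At this point I would appeal to a \emph{crowded refinement} of Theorem~\ref{basisof3}: among the three critical spaces, only the split interval and the Alexandroff duplicate of the Cantor set have nonempty crowded subspaces, the one-point compactification of the discrete continuum being scattered. For a crowded separable non-metrizable $Y$ this forces a copy of the split interval or of a Cantor set (the crowded core $C\times\{0\}$ of the Alexandroff duplicate); the latter is excluded by our standing assumption, leaving the split interval. The main obstacle is exactly to establish this crowded dichotomy while producing $Y$ separable: one must rule out the scattered critical example not as a subspace (it may well embed) but as the \emph{only} witness supplied by the basis theorem. This is where the finer structure of the earlier sections---the minimal spaces described through dense strong minimal analytic gaps and the spaces $K_1(\mathfrak{P})$, $K_\infty(\mathfrak{Q})$---together with the Fr\'echet--Urysohn property of Rosenthal compacta, is needed to guarantee that a crowded non-metrizable space genuinely contains the split interval rather than merely a copy of the discrete one-point compactification.
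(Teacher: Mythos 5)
There is a genuine gap, and you have in fact located it yourself: the ``crowded refinement'' of Theorem~\ref{basisof3} that your argument needs is not available, and it cannot be extracted from the basis theorem as stated. Applied to your separable, crowded, non-metrizable $Y$, Theorem~\ref{basisof3} only produces \emph{some} subspace of $Y$ homeomorphic to one of the three critical spaces, and nothing prevents that witness from being the scattered one (the separable supplement of the one-point compactification of a discrete set of size continuum): a crowded space has plenty of scattered subspaces, so crowdedness of $Y$ gives no control over \emph{which} critical space the basis theorem hands you. Worse, the statement you would need --- every crowded separable non-metrizable Rosenthal compactum contains a Cantor set or a split interval --- is, modulo the trivial remark that crowded compacta are non-scattered, exactly the theorem being proved, so the plan as written is circular. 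The intermediate step of passing to a separable, crowded, non-metrizable \emph{closed} $Y\subseteq P$ is also unjustified: the closure of a countable subset of $P$ need not be crowded, and non-metrizability need not be witnessed by any separable closed subspace (compare the one-point compactification of a discrete set of size continuum, a Rosenthal compactum all of whose separable closed subspaces are metrizable); some argument special to the crowded situation would be required even here.

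What the paper does instead, and what your outline is missing, is a mechanism that rules out the scattered alternative \emph{before} any structure or basis theorem is invoked. Non-scatteredness gives a continuous surjection $\psi: K\To [0,1]$; one picks points $d_t\in\psi^{-1}(z_t)$ indexed by the dyadic tree, where the $z_t$ enumerate a Cantor set compatibly with the tree structure. These points satisfy the separation property $(\clubsuit)$: for each $s$, the closures of $\{d_t : s^\frown 0\leq t\}$ and $\{d_t : s^\frown 1\leq t\}$ are disjoint, because $\psi$ maps them into disjoint portions of the Cantor set. The Ramsey-theoretic Lemma~\ref{findstandardK} then identifies the closure of $\{d_t\}$ (after passing to a first-move subtree) with some $K_1(\mathfrak{P})$ or $K_\infty(\mathfrak{Q})$; property $(\clubsuit)$ kills the $K_\infty(\mathfrak{Q})$ alternative outright, since $\mathbf{g}_\infty$ would have to lie in both of those disjoint closures, and Lemmas~\ref{containsthecantor} and~\ref{containsthesplit} finish: $K_1(\mathfrak{P})$ contains a Cantor set or a split interval according to whether $(0,1)$ and $(1,0)$ lie in the same piece of $\mathfrak{P}$ or not. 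The separation property inherited from $\psi$ is precisely the ingredient that excludes the scattered witness; crowdedness of a subspace, which is all your construction retains, cannot substitute for it.
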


\begin{thm}
If a Rosenthal compactum maps continuously onto the split interval, then it contains a copy of the split interval.
\end{thm}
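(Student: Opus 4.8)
The plan is to realize a copy of the split interval $S$ as the image of a continuous section of the given surjection. Write $\phi:K\to S$ for the continuous surjection, and recall that every closed subspace of a Rosenthal compactum is again a Rosenthal compactum, so by a routine Zorn's lemma argument applied to the closed subsets of $K$ that still map onto $S$ (compactness guarantees that the intersection of a decreasing chain of such sets again maps onto $S$, since each fiber meets each member of the chain) we may pass to a closed subspace on which $\phi$ is \emph{irreducible}: no proper closed subset maps onto $S$. From now on $\phi$ denotes this irreducible restriction, and the domain is still a Rosenthal compactum, hence Fr\'echet--Urysohn and bisequential by the facts collected in Section~\ref{sectionbiseq}.

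The first step is to produce a dense set of singleton fibers. Using the equivalent form of irreducibility, namely that for every nonempty open $U\subseteq K$ the set $\{y\in S:\phi^{-1}(y)\subseteq U\}$ is nonempty and open, together with the first countability of $S$, I would show that $D_0=\{y\in S:|\phi^{-1}(y)|=1\}$ is dense in $S$: intersecting a neighbourhood base at a prospective point with the open sets arising from a shrinking sequence of opens in $K$ pins the fiber down to a single point. Fix a countable dense $D\subseteq D_0$ and let $\sigma_0:D\to K$ be the forced assignment sending $y$ to its unique preimage.

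Next I would extend $\sigma_0$ to a section $\sigma:S\to K$ using the order structure of $S$. Each point of $S$ is either a \emph{left} limit point $t^-$ or a \emph{right} limit point $t^+$ under the monotone two-to-one projection $\pi:S\to[0,1]$, and accordingly is approached in $S$ by a sequence from $D$ lying strictly on one side. I would set $\sigma(t^-)$ to be the limit in $K$ of $\sigma_0(d_n)$ along any $d_n\nearrow t^-$ from $D$, and define $\sigma(t^+)$ along any $d_n\searrow t^+$. Since $\phi\circ\sigma_0=\mathrm{id}$ on $D$ and $\phi$ is continuous, any such limit lies in the correct fiber; in particular $\phi\circ\sigma=\mathrm{id}_S$, so $\sigma$ is automatically injective and $\sigma(t^-)\neq\sigma(t^+)$ whenever $t^-\neq t^+$. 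A continuous injection of the compact space $S$ into $K$ is an embedding, so $\sigma(S)$ is the desired copy of the split interval.

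The main obstacle is precisely the well-definedness and continuity of $\sigma$: one must show that the one-sided limits $\lim_{d_n\nearrow t^-}\sigma_0(d_n)$ exist in $K$ and are independent of the approaching sequence, and that the resulting map is continuous across the splitting of each point. This is where I expect bisequentiality to be indispensable, as it allows one to replace convergence of the relevant neighbourhood filters on the $S$-side by convergence of genuine sequences on the $K$-side, and, combined with the order-coherent way the clopen sets $\pi^{-1}(s,t]$ shrink, it forces the family $\{\sigma_0(d_n)\}$ to cluster at a single point rather than along a whole fiber. Irreducibility is used once more here to exclude the degenerate possibility that the two one-sided limits collapse, which would produce a continuous image of $S$ inside $K$ that is merely the Cantor set; ruling this out is exactly what distinguishes the present hypothesis of a surjection onto $S$ from the weaker non-scatteredness hypothesis of the preceding theorem.
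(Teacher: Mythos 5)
Your reduction to an irreducible map is fine, but the strategy built on it cannot work, and the step you yourself flag as ``the main obstacle'' (well-definedness of the one-sided limits) is precisely where it breaks --- not for lack of cleverness, but because it is false. Note first what a success of your plan would mean: if $\sigma:S\to K$ is continuous with $\phi\circ\sigma=\mathrm{id}_S$, then $\sigma(S)$ is a closed subset of $K$ mapping onto $S$, so irreducibility forces $\sigma(S)=K$, and then $\phi$ is a continuous bijection of compacta, i.e.\ a homeomorphism. So your proof amounts to the claim that every irreducible Rosenthal preimage of $S$ is homeomorphic to $S$ via the given map. Here is a counterexample. Fix $t$ and $u_k\nearrow t$, and let $I_k=[u_k^+,u_{k+1}^-]$ be the corresponding clopen intervals of $S$; put $A_1=\bigcup_{k\ \mathrm{even}}I_k$, $A_2=\bigcup_{k\ \mathrm{odd}}I_k$, and let $g=\chi_{A_1}$, which is continuous on $S\setminus\{t^-\}$. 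Then
$K'=\{(x,g(x)) : x\in S\setminus\{t^-\}\}\cup\{(t^-,0),(t^-,1)\}$
is closed in $S\times\{0,1\}$, hence a Rosenthal compactum, and the projection $\phi:K'\to S$ is onto, two-to-one exactly over $t^-$, and irreducible: any closed set mapping onto $S$ must contain the graph over $S\setminus\{t^-\}$, whose closure is all of $K'$. Yet $\phi$ admits no continuous section, and your $\sigma$ is genuinely ill-defined at $t^-$: sequences $d_n\nearrow t^-$ inside $A_1$ have $\sigma_0(d_n)\to(t^-,1)$, while sequences inside $A_2$ give $(t^-,0)$. Bisequentiality cannot repair this, since both limits really exist and really differ; and the role you assign to irreducibility at the end is backwards --- the danger is not that the two one-sided limits collapse, but that a single one-sided limit fails to be unique. (Your density-of-singleton-fibers step is also unproved as stated --- shrinking open sets in $K$ do not ``pin down'' a fiber, since $K$ need not be first countable --- but in this example that step happens to hold, so it is not the essential issue.)

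The defect is not reparable by localizing: splitting $S$ in this manner at every point of a countable dense set $Q\subseteq S$ (an inverse limit of such two-point splittings, which is still a Rosenthal compactum mapping irreducibly onto $S$) yields a preimage admitting no section even over any nonempty open subinterval of $S$. This explains the shape of the paper's proof of Theorem~\ref{injectiveonsplit}: one should not look for a copy of $S$ lying \emph{over all of} $S$, but for one lying over a thin, generically chosen subset. The paper picks points $d_t$ in the fibers over the left twins $z_t^-$ of a Cantor-set-indexed family, records the separation property $(\clubsuit)$, and then applies the tree/Ramsey dichotomy of Lemma~\ref{findstandardK} to identify the closure of $\{d_t\}$ with some $K_1(\mathfrak{P})$ (the $K_\infty(\mathfrak{Q})$ alternative being excluded by $(\clubsuit)$); the case where $(0,1)$ and $(1,0)$ lie in the same piece of $\mathfrak{P}$ is ruled out because it would produce an uncountable metrizable closed subspace of $S$, and in the remaining case the points $\mathbf{f}_{(x,P)}$, $\mathbf{f}_{(x,P')}$ form a copy of $S(2^\omega)$ on which $\psi$ is injective --- a copy whose image in $S$ is nowhere dense, exactly the object your section-based approach cannot see.
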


Some problems are proposed in Section~\ref{sectionprob}.

\section{Rosenthal and bisequential compact spaces}\label{sectionbiseq}

Rosenthal showed that Rosenthal compact spaces are sequentially compact and have countable tightness \cite{Rosenthal}. This was improved by Bourgain, Fremlin and Talagrand \cite{BFT} that showed that these are Fr\'{e}chet-Urysohn spaces (every point in the closure of a set $A$ is the limit of a sequence in $A$). This was further improved by Pol to the property of bisequentiality \cite{Pol}. If $K$ is a topological space, a sequence $\{A_n : n<\omega\}$ of subsets of $K$ is said to converge to a point $x\in K$ if for every neighborhood $W$ of $x$ there exists $n_0$ such that $A_n\subset W$ for all $n>n_0$.

\begin{defn}
A compact space $K$ is bisequential if for every ultrafilter $\mathcal{U}$ on $K$ that converges to a point $x\in K$ there exists a sequence of sets $A_1,A_2,\ldots\in\mathcal{U}$ that converges to $x$.
\end{defn}

\begin{thm}[Pol] Every separable Rosenthal compactum is bisequential.
\end{thm}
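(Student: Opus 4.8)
The plan is to fix an ultrafilter $\U$ on $K$ converging to a point $f_0$ and to manufacture a decreasing sequence $A_1\supseteq A_2\supseteq\cdots$ of members of $\U$ that converges to $f_0$. Before anything else I would record the trivial but useful observation that bisequentiality is automatic at any point of countable character: if $f_0$ has a decreasing neighbourhood base $(W_n)$, then since $\U\to f_0$ forces $W_n\in\U$ for every $n$, the sequence $A_n=W_n$ is decreasing, lies in $\U$, and converges to $f_0$. Thus the entire content of the theorem is concentrated at points whose character is uncountable, which in a separable space can certainly occur (e.g.\ the point at infinity of the one-point compactification that appears among the basic examples).

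Next I would carry out the standard reduction of the index space to a second countable one. Representing $K$ as a pointwise compact set of first Baire class functions on a Polish space $X$, and fixing a countable dense set $D=\set{d_m:m<\omega}\subseteq K$, I would consider the evaluation map $e\colon X\to\Real^D$, $e(t)=(d_m(t))_m$. If $e(s)=e(t)$ then $d(s)=d(t)$ for all $d\in D$, and since the coordinate evaluations are continuous on $K$ and $D$ is dense, $f(s)=f(t)$ for every $f\in\overline{D}=K$; hence every $f\in K$ factors through $e$, and $K$ may be realised (still as first Baire class functions, by the Bourgain--Fremlin--Talagrand analysis) on the separable metric space $X_1=e(X)\subseteq\Real^\omega$. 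From now on $X$ is second countable, and I would fix a countable set $T=\set{t_k:k<\omega}\subseteq X$ and form the natural candidate $A_n=\set{f\in K:\abs{f(t_k)-f_0(t_k)}<1/n \text{ for } k\le n}$. Each $A_n$ is a neighbourhood of $f_0$, hence lies in $\U$, the sequence is decreasing, and it converges to $f_0$ in the topology of pointwise convergence on $T$.

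The hard part is precisely that this last, weaker convergence is not the convergence we want: a countable $T$ can never determine the topology of $K$ unless $K$ is metrizable, since an injective continuous map from the compact $K$ into the metric $\Real^T$ would be a homeomorphism. So the candidate $A_n$ must be refined, and the gap between pointwise convergence on $T$ and genuine convergence in $K$ has to be closed using the full strength of the hypotheses: the Fr\'echet--Urysohn and angelic properties of $K$ (every point in a closure is a pointwise-everywhere sequential limit) together with the fact that the functions are of the \emph{first} Baire class. Concretely, I would combine the sequential compactness of $K$ with a fusion/diagonalisation over the countable base of $X$, extracting from $\U$-large sets sequences that converge pointwise on $T$ and then invoking the stability of first Baire class functions under the relevant limits to upgrade this to pointwise convergence on all of $X$, arranging that the resulting sets remain decreasing members of $\U$. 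I expect this passage---from a countable determining family to honest convergence, which is exactly where Baire class $1$ (as opposed to an arbitrary Borel class) is indispensable---to be the crux of the argument and the place where the Bourgain--Fremlin--Talagrand machinery does the real work.
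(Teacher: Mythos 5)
First, a point of comparison: the paper does not prove this theorem at all --- it is stated with a citation to Pol's paper, so there is no internal proof to measure your attempt against. Judged on its own, your proposal is not yet a proof; it is a setup followed by an explicit acknowledgement that the decisive argument is missing. Everything you carry out concretely --- the observation that points of countable character are harmless, the factorization of $K$ through the evaluation map $e\colon X\To\Real^D$, and the sets $A_n=\set{f\in K:\abs{f(t_k)-f_0(t_k)}<1/n,\ k\le n}$ --- produces only neighbourhoods of $f_0$ in the weak topology of pointwise convergence on a countable set $T$, and you yourself correctly prove that this can never suffice unless $K$ is metrizable. The entire content of Pol's theorem is the step you defer to ``fusion/diagonalisation'' and ``stability of first Baire class functions under the relevant limits'': nowhere do you exhibit a mechanism that converts the hypothesis that $\U$ converges to $f_0$ \emph{in the topology of $K$} into membership in $\U$ of sets that converge to $f_0$ \emph{in the topology of $K$}. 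Sequential compactness and the Fr\'echet--Urysohn property (themselves the deep BFT input) give you convergent sequences of \emph{points}; bisequentiality demands a decreasing sequence of \emph{sets belonging to the ultrafilter}, and the passage from the first to the second is precisely what has to be invented. Naming that passage ``the crux'' does not supply it, and nothing in your outline indicates how first Baire class would actually be used there.

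There is also a flaw in the one reduction you do perform. After factoring through $e$, the image $X_1=e(X)$ is in general only an \emph{analytic} subset of $\Real^\omega$, not a Polish one, because $e$ is merely a Borel map (its coordinates $d_m$ are Baire-1, not continuous); and for the same reason there is no automatic transfer of the first Baire class property to the induced functions on $X_1$ --- ``by the Bourgain--Fremlin--Talagrand analysis'' is an appeal, not an argument. This is not pedantry: handling analytic rather than Polish index sets is exactly the technical heart of Pol's actual proof (his paper is literally about pointwise convergence of \emph{analytic} sets). The paper you are reading sidesteps the same difficulty in the opposite direction, by refining the Polish topology of $X$ so that a countable dense subset of $K$ becomes continuous and then pulling back through a continuous surjection $\omega^\omega\To X$ (Lemma~\ref{densesetofcontinuous}); that device, or Pol's analytic-set machinery, is what your reduction would need to be replaced by before the outline could be completed.
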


The following fact noticed in \cite{Knaust} for Rosenthal compacta, can be obtained as an application of bisequentiality:

\begin{lem}\label{biseqblocks}
Let $K$ be a bisenquential compact space. Suppose that we have a sequence $\{x_k\}_{k<\omega}\subset K$ that clusters at $x$ and, for every $k$, a sequence $A_k = \{x_{kp}\}_{p<\omega}\subset K$ that clusters at $x_k$. Then there exist infinite sets $N,N_k\subset \omega$ for each $k$, such that the sets $\{\{x_{kp} : p\in N_k\} : k\in N\}$ converge to $x$.
\end{lem}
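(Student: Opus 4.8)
The plan is to encode the two levels of clustering into a single ultrafilter on $K$, apply bisequentiality to that ultrafilter, and then decode the resulting convergent sequence of sets back into the double-indexed array. The point of passing through an ultrafilter (rather than diagonalizing by hand) is that $K$ need not be first countable.

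First I would build the ultrafilters. Since $\{x_k\}$ clusters at $x$, the family $\{\{k : x_k \in W\} : W \text{ a neighbourhood of } x\}$ together with the cofinite subsets of $\omega$ has the finite intersection property (each of its members is infinite, because $x$ is a cluster point of the sequence), so it extends to a non-principal ultrafilter $\mathcal{V}$ on $\omega$ satisfying $\lim_{k\to\mathcal{V}} x_k = x$. In the same way, for each $k$ the clustering of $A_k$ at $x_k$ yields a non-principal ultrafilter $\mathcal{W}_k$ on $\omega$ with $\lim_{p\to\mathcal{W}_k} x_{kp} = x_k$. I would then form the iterated limit: define $\mathcal{U}$ on $K$ by declaring $B \in \mathcal{U}$ iff $\{k : \{p : x_{kp}\in B\}\in\mathcal{W}_k\}\in\mathcal{V}$. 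This is the image under the map $(k,p)\mapsto x_{kp}$ of the Fubini sum $\sum_{\mathcal{V}}\mathcal{W}_k$ on $\omega\times\omega$, hence an ultrafilter. It converges to $x$: given an open $W\ni x$, the set $\{k : x_k\in W\}$ lies in $\mathcal{V}$, and for each such $k$ we have $x_k\in W$ with $\lim_{p\to\mathcal{W}_k}x_{kp}=x_k$, so $\{p:x_{kp}\in W\}\in\mathcal{W}_k$; therefore $W\in\mathcal{U}$.

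Now I would apply bisequentiality of $K$ to $\mathcal{U}$ to obtain sets $B_1, B_2, \ldots \in \mathcal{U}$ converging to $x$, and, replacing $B_n$ by $B_1\cap\cdots\cap B_n$, I may assume $B_1\supseteq B_2\supseteq\cdots$ (the intersections stay in $\mathcal{U}$ and shrinking sets still converge to $x$). Put $S_n = \{k : \{p:x_{kp}\in B_n\}\in\mathcal{W}_k\}$; by definition of $\mathcal{U}$ each $S_n$ is in $\mathcal{V}$, hence infinite, and the $S_n$ decrease because the $B_n$ do. I then choose $k_1<k_2<\cdots$ with $k_n\in S_n$, set $N=\{k_n : n\ge 1\}$, and for each $n$ let $N_{k_n} = \{p : x_{k_n p}\in B_n\}$, which lies in $\mathcal{W}_{k_n}$ (precisely because $k_n\in S_n$) and is therefore infinite; for $k\notin N$ I take $N_k$ to be any infinite set. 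By construction the block $\{x_{k_n p} : p\in N_{k_n}\}$ is contained in $B_n$, so since $\{B_n\}$ converges to $x$ it follows that the family $\{\{x_{kp}:p\in N_k\}:k\in N\}$ converges to $x$, as required.

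The main obstacle is exactly the step that forces the use of bisequentiality: a neighbourhood-by-neighbourhood diagonalization would make the cut-off of each block depend on the chosen neighbourhood, and without first countability there is no single diagonal that works for all neighbourhoods simultaneously. Packaging both levels of clustering into one convergent ultrafilter and then extracting a single sequence of witnessing sets out of bisequentiality is what overcomes this, while the monotonicity trick of passing to decreasing $B_n$ (which makes the index sets $S_n$ decrease) is what allows the final monotone selection $k_n\in S_n$.
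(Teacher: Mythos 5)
Your proof is correct and takes essentially the same route as the paper's: package the two levels of clustering into a single ultrafilter on $K$ converging to $x$, apply bisequentiality to get a sequence of witnessing sets, and then diagonally extract $k_1<k_2<\cdots$ and $N_{k_n}$ from those sets. The only cosmetic difference is that you build the ultrafilter explicitly as the pushforward of a Fubini sum $\sum_{\mathcal{V}}\mathcal{W}_k$ (decoding via non-principality), while the paper extends the canonical filter $\mathcal{F}$ of sets containing cofinitely many $x_{kp}$ for cofinitely many $k$ to an ultrafilter converging to $x$ (decoding via $K\setminus Z_n\notin\mathcal{F}$); the two constructions are interchangeable here.
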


\begin{proof}
Let $$\mathcal{F} = \left\{ Z\subset K : |\{k<\omega : |\{p\in\omega : x_{kp}\not\in Z|=\omega\}|<\omega  \right\}.$$
This is a filter that clusters at the point $x$. By compactness, there exists an ultrafilter $\mathcal{U}$ that contains $\mathcal{F}$ and converges to $x$. Using bisequentiality we find a sequence $\{Z_n : n<\omega\}$ of elements of $\mathcal{U}$ that converges to $x$. The sets $K\setminus Z_n$ do not belong to $\mathcal{F}$, hence
$$M_n := |\{k<\omega : |\{p\in\omega : x_{kp}\in Z_n\}| = \omega\}|=\omega.$$
Choose inductively $k_1<k_2<k_3<\cdots$ such that $k_n\in M_n$ for every $n$. We take $N = \{k_1,k_2,\ldots\}$ and $N_{k_n} = \{p\in\omega : x_{k_n p}\in Z_n\}$.
\end{proof}

Notice that the above implies in particular that every bisequential space is Fr\'{e}chet-Urysohn, and therefore sequentially compact as well. The topology in a bisequential space is strongly determined by the behavior of convergent sequences. In particular, we have the following useful fact, that we shall frequently need:

\begin{lem}\label{continuousondense}
Let $K$ and $\tilde{K}$ be bisequential spaces, $D$ a dense subset $K$, and $\phi:D\To \tilde{K}$ a function. Then,
\begin{enumerate}
\item either there exists a continuous function $\Phi:K\To\tilde{K}$ that extends $\phi$,
\item or there exist two sequences $\{d_n\}$ and $\{d'_n\}$ in $D$ that converge to the same point, such that $\{\phi(d_n)\}$ and $\{\phi(d'_n)\}$ converge to two different points of $\tilde{K}$.
\end{enumerate}
\end{lem}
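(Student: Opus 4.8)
The plan is to prove the contrapositive: assuming that alternative (2) fails, I build the continuous extension required in (1). Throughout I use that bisequential compacta are Fr\'echet-Urysohn and sequentially compact, as noted after Lemma~\ref{biseqblocks}, and I take $K$ and $\tilde{K}$ to be compact Hausdorff, hence regular. First I define $\Phi$ pointwise. Fix $x\in K$. Since $D$ is dense and $K$ is Fr\'echet-Urysohn, there is a sequence $(d_n)$ in $D$ with $d_n\to x$, and by sequential compactness of $\tilde{K}$ some subsequence of $(\phi(d_n))$ converges; I set $\Phi(x)$ equal to its limit. The negation of (2) makes this well defined: if $(d_n)\to x$ and $(d'_n)\to x$ lie in $D$ and their $\phi$-images converge to $y$ and $y'$, then $y=y'$, for otherwise these two sequences would witness (2).

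The same observation yields the slightly stronger statement I will actually use: for \emph{every} sequence $(d_n)$ in $D$ with $d_n\to x$ one has $\phi(d_n)\to\Phi(x)$. Indeed, if this failed, some subsequence of $(\phi(d_n))$ would stay outside an open neighbourhood $W$ of $\Phi(x)$; by sequential compactness a further subsequence would converge to a point of the closed set $\tilde{K}\setminus W$, hence to a point different from $\Phi(x)$, again producing the configuration forbidden by the negation of (2). Taking constant sequences shows $\Phi|_D=\phi$, so $\Phi$ extends $\phi$.

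The heart of the argument is the continuity of $\Phi$. Since $K$ is Fr\'echet-Urysohn, hence sequential, it suffices to verify sequential continuity: given $x_n\to x$ in $K$, I must show $\Phi(x_n)\to\Phi(x)$. Suppose not; passing to a subsequence I may assume $\Phi(x_n)\notin W$ for all $n$, where $W$ is open and $\Phi(x)\in W$. By regularity pick an open $W'$ with $\Phi(x)\in W'\subset\overline{W'}\subset W$. For each $n$ choose, using the Fr\'echet-Urysohn property, a sequence $(d^{(n)}_p)_p$ in $D$ converging to $x_n$; by the previous paragraph $\phi(d^{(n)}_p)\to\Phi(x_n)\notin\overline{W'}$, so after discarding finitely many $p$ I may assume $\phi(d^{(n)}_p)\notin W'$ for all $p$. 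Now I invoke bisequentiality through Lemma~\ref{biseqblocks}, applied to the sequence $(x_n)$ clustering at $x$ together with the sequences $A_n=\{d^{(n)}_p\}_p$ clustering at $x_n$. It produces infinite sets $N$ and $N_n$ such that the blocks $B_n=\{d^{(n)}_p : p\in N_n\}$, for $n\in N$, converge to $x$. Choosing one point $e_n\in B_n$ for each $n\in N$ gives a sequence in $D$ that converges to $x$, since every neighbourhood of $x$ contains all but finitely many blocks $B_n$. On the other hand $\phi(e_n)\in\tilde{K}\setminus W'$ for every $n$, so after passing to a convergent subsequence its limit lies in the closed set $\tilde{K}\setminus W'$ and therefore differs from $\Phi(x)$. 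This contradicts the conclusion of the second paragraph that $\phi(e_n)\to\Phi(x)$. Hence $\Phi$ is sequentially continuous, and the proof is complete.

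The main obstacle is precisely this last diagonalization: in a general space one cannot extract from the array $(d^{(n)}_p)$ a single sequence in $D$ that converges to $x$, and it is exactly bisequentiality, packaged as Lemma~\ref{biseqblocks}, that permits the simultaneous choice of the tails $N_n$ and the index set $N$ for which the diagonal sequence behaves correctly. The well-definedness of $\Phi$, by contrast, is routine and rests only on sequential compactness of $\tilde{K}$ together with the failure of (2).
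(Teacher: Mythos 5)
Your proof is correct, and its skeleton agrees with the paper's: assume (2) fails, define $\Phi(x)$ as the limit of the $\phi$-images along a sequence in $D$ converging to $x$ (well defined precisely because (2) fails), and then verify continuity on convergent sequences via the diagonalization supplied by Lemma~\ref{biseqblocks}. Where you diverge is in how the target space is treated. The paper takes $x_k\to x$ with $\Phi(x_k)\to\tilde{x}$, picks sequences $(d_{kp})_p\to x_k$ with $\phi(d_{kp})_p\to\Phi(x_k)$, and applies Lemma~\ref{biseqblocks} \emph{twice} --- once in $K$ and once in $\tilde{K}$ --- to extract a single diagonal sequence $(d_{kp[k]})$ converging to $x$ whose images converge to $\tilde{x}$; the failure of (2) then forces $\tilde{x}=\Phi(x)$. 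You instead first upgrade well-definedness to the stronger statement that \emph{every} sequence in $D$ converging to $x$ has images converging to $\Phi(x)$ (using only sequential compactness of $\tilde{K}$ and the failure of (2)), and then argue continuity by contradiction, applying Lemma~\ref{biseqblocks} only once, in $K$; the role of the second application is played by your shrunken neighbourhood $W'$ with $\overline{W'}\subset W$, i.e.\ by regularity of the compact Hausdorff space $\tilde{K}$, which pins the limit of $(\phi(e_n))$ outside $W'$ and clashes with the upgraded claim. Both arguments are sound. The paper's is shorter; yours is slightly more economical in hypotheses, since it invokes bisequentiality only of $K$ and needs nothing from $\tilde{K}$ beyond Hausdorffness, regularity and sequential compactness, so it would prove the lemma with the assumption on $\tilde{K}$ correspondingly weakened --- a distinction that is immaterial for the applications in this paper, where both spaces are Rosenthal compacta.
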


\begin{proof}
Suppose that (2) does not hold. For every $x\in K$ choose a sequence $(d_k)\subset D$ that converges to $x$ and such that $(\phi(d_k))$ is convergent, and then define $\Phi(x) = \lim_k \phi(d_k)$. Since bisequential spaces are Frechet-Urysohn and sequentially compact, such a sequence $d_k$ always exist. Moreover, by the failure of (2), the value $\Phi(x)$ does not depend on the choice of the sequence $(d_k)$. It remains to show that $\Phi$ is continuous. It is enough to check that it preserves limits of convergent sequences. So pick $(x_k)$ that converges to $x$ in $K$, $\Phi(x_k)$ converges to some $\tilde{x}\in \tilde{K}$, and we want to check hat $\tilde{x} = \Phi(x)$.  For every $k$ let $(d_{kp})_p$ be a sequence that converges to $x_k$ and $\phi(d_{kp})_p$ converges to $\Phi(x_k)$. Using Lemma~\ref{biseqblocks} twice, once in $K$ and once in $\tilde{K}$ we get a sequence of the form $(d_{kp[k]})$ that converges to $x$, and such that 
$(\phi(d_{kp[k]})$ converges to $\tilde{x}$. This implies that $\Phi(x) = \tilde{x}$.
\end{proof}

Here is another application of bisequentiality concerning scattered spaces. $K^{(\alpha)}$ denothes the $\alpha$-th Cantor-Bendixson derivative of $K$.

\begin{thm}
Let $K$ be a scattered bisequential compactum and $\alpha<\omega_1$ such that $K^{(\alpha)} \neq \emptyset$ . Then there exists a countable closed subset $L\subset K$ such that $L^{(\alpha)}\neq \emptyset$.
\end{thm}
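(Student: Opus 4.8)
The plan is to prove, by transfinite induction on $\alpha$, the statement $H(\alpha)$: for every $x\in K^{(\alpha)}$ there is a countable closed set $L\subseteq K$ with $x\in L^{(\alpha)}$. Applying $H(\alpha)$ to any point of the nonempty set $K^{(\alpha)}$ then gives the theorem. To keep the induction self-sustaining I would carry a stronger hypothesis: together with $L$ one obtains a decomposition $L=\{x\}\cup\bigcup_i C_i$, where $(C_i)_i$ is a sequence of countable closed sets with $x\notin C_i$ that converges to $x$ in the sense defined before Lemma~\ref{biseqblocks}, together with selectors $y_i\in C_i$ satisfying $y_i\in C_i^{(\alpha_i)}$ and $\sup_i(\alpha_i+1)=\alpha$ (so $\alpha_i=\beta$ for all $i$ when $\alpha=\beta+1$, and $\alpha_i\nearrow\alpha$ when $\alpha$ is a limit). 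Since the $C_i$ converge to $x$, the selectors satisfy $y_i\to x$; as $y_i\in C_i^{(\alpha_i)}\subseteq L^{(\alpha_i)}$ (derivatives are monotone under inclusion of subspaces) and each $L^{(\gamma)}$ is closed, one reads off $x\in L^{(\gamma)}$ for every $\gamma<\alpha$, hence $x\in L^{(\alpha)}$. The point of this formulation is that the rank of $x$ is witnessed in a way that survives passing to any infinite subfamily of the blocks $C_i$. The cases $\alpha=0,1$ are immediate from Fr\'echet-Urysohnness (the blocks are singletons), so the content is in $\alpha\geq 2$.

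Both the successor and the limit step reduce to the same configuration. Using that $K$ is Fr\'echet-Urysohn, I would first produce distinct points $x_k\to x$ with $x_k\in K^{(\delta_k)}$, where $\delta_k=\beta$ for all $k$ if $\alpha=\beta+1$, and $\delta_k\nearrow\alpha$ if $\alpha$ is a limit (using that $x$ accumulates $K^{(\gamma)}$ for each $\gamma<\alpha$). Applying $H(\delta_k)$ to each $x_k$ gives witnesses $L_k=\{x_k\}\cup\bigcup_p C^k_p$, where $(C^k_p)_p$ converges to $x_k$. The naive candidate $L=\{x\}\cup\bigcup_k L_k$ has the correct rank at $x$, but its closure need not be countable: points drawn from infinitely many different $L_k$ could accumulate at uncountably many new points, and, with no metric or first countability available, there is no obvious way to shrink the $L_k$ towards $x$.

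This is exactly the obstacle that bisequentiality removes, and it is the only nontrivial step. I would apply Lemma~\ref{biseqblocks} in the routine set-valued form obtained by rerunning its proof with the filter $\{Z : |\{k : |\{p : C^k_p \not\subseteq Z\}| = \omega\}| < \omega\}$ in place of the point filter, to the array $(C^k_p)$ with $x_k\to x$ and $(C^k_p)_p\to x_k$. This returns infinite sets $N$ and $N_k$ ($k\in N$) such that the blocks $\{C^k_p : k\in N,\ p\in N_k\}$ converge to $x$. Setting $L=\{x\}\cup\bigcup_{k\in N,\,p\in N_k}C^k_p$, I claim $\overline{L}$ is countable: if a sequence drawn from these blocks converges to some $y\neq x$, then, separating $x$ from $y$ by disjoint open sets and using that all but finitely many blocks lie inside the neighbourhood of $x$, the sequence is eventually contained in a finite union of the (countable, closed) blocks, so $y$ lies in that finite union. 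Hence $\overline{L}\subseteq\{x\}\cup\bigcup_{k\in N,\,p\in N_k}C^k_p$ is countable and closed.

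It then remains to verify that this $L$ realizes the strengthened $H(\alpha)$. Keeping each $N_k$ infinite retains, for every $k\in N$, enough blocks of $L_k$ that the selector data $y^k_p\to x_k$ with $\sup_{p\in N_k}(\alpha^k_p+1)=\delta_k$ still holds, so $x_k\in\bigl(\{x_k\}\cup\bigcup_{p\in N_k}C^k_p\bigr)^{(\delta_k)}\subseteq L^{(\delta_k)}$; since $x_k\to x$ and the $\delta_k$ meet the required sup condition, $x\in L^{(\alpha)}$ as explained in the first paragraph. Re-indexing $\{C^k_p : k\in N,\ p\in N_k\}$ as a single sequence of blocks converging to $x$, with the selectors $y^k_p$ of ranks $\alpha^k_p$, supplies the decomposition demanded by $H(\alpha)$. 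The main work, and the only place where bisequentiality is essential, is the set-valued application of Lemma~\ref{biseqblocks} that forces the blocks to converge to $x$ and thereby keeps the closure countable.
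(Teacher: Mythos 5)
Your reduction of the theorem to a ``set-valued form'' of Lemma~\ref{biseqblocks} is exactly where the proof breaks, and that step is neither routine nor, as you state it, even true. First, the stated conclusion is unachievable: you must keep infinitely many $p\in N_k$ for each $k\in N$ (otherwise the rank of $x_k$ is not witnessed in $L$), but then for any fixed $k\in N$ and any open $W\ni x$ with $x_k\notin \overline{W}$ (regularity), the blocks $C^k_p\To x_k$ eventually lie inside $K\setminus\overline{W}$, so infinitely many blocks of your combined family fail to be contained in $W$. The convergence you can hope for (and the one your closure argument actually needs, using that each $D_k=\{x_k\}\cup\bigcup_{p\in N_k}C^k_p$ is closed) is convergence of the per-$k$ unions $D_k$, $k\in N$, to $x$. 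Second, and more seriously, rerunning the proof of Lemma~\ref{biseqblocks} with your filter $\{Z : |\{k : |\{p : C^k_p\not\subseteq Z\}|=\omega\}|<\omega\}$ collapses at the negation step: from $K\setminus Z_n$ not belonging to the filter you only get infinitely many $k$ with infinitely many $p$ such that $C^k_p\cap Z_n\neq\emptyset$, i.e.\ blocks that \emph{meet} $Z_n$. For singleton blocks ``meets'' and ``is contained in'' coincide, which is precisely what the original proof exploits; for set-valued blocks they do not, so the selected blocks need not sit inside $Z_n$ and you cannot conclude that they converge to $x$ from $Z_n\To x$. Thus the single step where bisequentiality enters is the step that is missing. (There is also a minor slip, fixable: the points $x_k$, $k\in N$, are limits of the retained blocks, so they belong to $\overline{L}$ and must be added to $L$; your claim $\overline{L}\subseteq\{x\}\cup\bigcup_{k,p}C^k_p$ is false as written.)

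The paper's proof avoids this obstruction by building the filter out of Cantor--Bendixson rank rather than out of block containment. Having reduced to $K^{(\alpha)}=\{\infty\}$ and chosen countable compact $L_n$ with $L_n^{(\beta_n)}=\{x_n\}$, it takes the filter generated by those $A\subseteq L=\bigcup_n L_n$ with $\overline{L_n\setminus A}^{(\beta_n)}=\emptyset$ for all but finitely many $n$. Now the negation step gives exactly what is needed: for each $F_i$ in the extending ultrafilter (decreasing, converging to $\infty$), one gets $\overline{F_i\cap L_n}^{(\beta_n)}\neq\emptyset$ for infinitely many $n$, i.e.\ the part of $L_n$ \emph{inside} $F_i$ retains full rank. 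Then $\tilde{L}=\{\infty\}\cup\bigcup_i\overline{F_i\cap L_{n[i]}}$ is countable, closed, and has $\infty\in\tilde{L}^{(\alpha)}$, because these rank-$\beta_{n[i]}$ pieces lie in $\overline{F_i}$ and hence converge to $\infty$. The point is that rank is robust under intersecting with arbitrary ultrafilter sets (removing a low-rank piece cannot kill it), whereas containment of a prescribed block is not; your argument would have to be rebuilt around such a rank-based filter, at which point it becomes the paper's proof.
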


\begin{proof}
By picking a neighborhood of a point in the $\alpha$-th level that isolates it in its level, we can suppose that $K^{(\alpha)} = \{\infty\}$ is a singleton. We shall proceed by induction on $\alpha$, the initial case being trivial. We consider a sequence $\{x_n\}\subset K$ of different points belonging to $K^{(\beta_n)}\setminus K^{(\beta_n+1)}$ where $\beta_n=\beta$ if $\alpha=\beta+1$ is successor, or $\lim_n\beta_n = \alpha$ if $\alpha$ is limit. By the inductive hypothesis, we can find countable compact sets $L_n$ of height $\beta_n$ with $L_n^{(\beta_n)} = \{x_n\}$. Let $L = \bigcup_n L_n$. We can consider $\mathcal{F}_0$ a filter on $K$ generated by all subsets $A\subset L$ for which $\overline{L_n\setminus A}^{(\beta_n)}=\emptyset$ for all but finitely many $n$'s. It is clear that $\mathcal{F}_0$ clusters at $\infty$. Let $\mathcal{F}$ be an ultrafilter that contains this filter and converges to $\infty$. By bisequentiality we can find a sequence of sets $L\supset F_1\supset F_2\supset \cdots$ in $\mathcal{F}$ that converges to $\infty$. Each $F_i$ has the property that $\overline{F_i\cap L_n}^{(\beta_n)} \neq\emptyset$ for infinitely many $n$'s, or otherwise $L\setminus F_i\in\mathcal{F}_0$ that would contradict that $F_i\in \mathcal{F}$. Inductively, choose $n[1]<n[2]<\cdots$ such that $\overline{F_{i}\cap L_{n[i]}}^{(\beta_{n[i]})} \neq\emptyset$. The closed set $\tilde{L} = \{\infty\}\cup \bigcup_i\overline{F_{i}\cap L_{n[i]}}$ is the one we are looking for.
\end{proof}

Finally, another well known fact on Rosenthal compacta.

\begin{lem}\label{densesetofcontinuous}
Let $K$ be a separable Rosenthal compact space, and $D\subset K$ a countable dense subset of $K$. Then there exists a countable familiy $\tilde{D}$ of continuous functions on $\omega^\omega$ whose pointwise closure consits of first Baire class functions on $\omega^\omega$, and a bijection $\phi:D\To \tilde{D}$ that extends to a homeomorphism between the closures.
\end{lem}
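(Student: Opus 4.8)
The plan is to first put $K$ in its defining form as a pointwise compact set of Baire-one functions on \emph{some} Polish space, and then to change the topology of that domain so that exactly the countably many functions coming from $D$ become continuous, while all the remaining functions stay of the first Baire class. The key observation that makes this possible is that, for a set of functions sitting inside $\Real^P$, the pointwise topology depends only on $P$ as a set and not on any topology we put on $P$; hence we are free to replace the topology of the domain without altering $K$ or its dense subset $D$ as topological spaces. Here the separability of $K$ enters precisely through the countability of $D$, which is what keeps the refinement below manageable.

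So I would begin with a homeomorphism identifying $K$ with a pointwise compact set of first Baire class functions on a Polish space $(P,\tau)$, and write $\{f_n : n<\omega\}$ for the functions corresponding to $D$. For each $n$ and each $U$ in a fixed countable basis of $\Real$, the set $f_n^{-1}(U)$ is $F_\sigma$, hence Borel. By the standard change-of-topology theorem of descriptive set theory I can find a finer zero-dimensional Polish topology $\tau'$ on $P$, with the same Borel sets as $\tau$, in which all of these countably many $F_\sigma$ sets are clopen. Each $f_n$ then becomes $\tau'$-continuous, since $f_n^{-1}(U)$ is a $\tau'$-open union of basic preimages for every open $U\subseteq\Real$. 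At the same time, passing to a finer topology only enlarges the supply of closed, hence of $F_\sigma$, sets, so every $\tau$-Baire-one function remains $\tau'$-Baire-one; in particular every function belonging to $K$ is still of the first Baire class with respect to $\tau'$.

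Next I would move from $(P,\tau')$ to $\omega^\omega$. Since $(P,\tau')$ is zero-dimensional Polish, I embed it as a closed subspace $C\subseteq\omega^\omega$ and fix a retraction $r:\omega^\omega\To C$ (closed subsets of $\omega^\omega$ are retracts, e.g.\ via the leftmost-branch map along the tree defining $C$). Composition $g\mapsto g\circ r$ sends $\tau'$-continuous functions to continuous functions on $\omega^\omega$ and $\tau'$-Baire-one functions to Baire-one functions on $\omega^\omega$ (preimages of $F_\sigma$ sets under the continuous map $r$ are again $F_\sigma$); it is injective because $r|_C$ is the identity, and it is a homeomorphism for the pointwise topologies, its inverse being restriction to $C$. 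I then set $\tilde D = \{f_n\circ r : n<\omega\}$ and $\phi(f_n) = f_n\circ r$. This $\phi$ is the desired bijection, its extension to the closures being the homeomorphism induced by $g\mapsto g\circ r$; as $K$ is compact, the image of $K$ is closed in the pointwise topology, so it coincides with the pointwise closure of $\tilde D$ and consists of first Baire class functions.

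The hard part is the refinement step, where one must simultaneously force the countably many functions of $D$ to be continuous and preserve the Polish (indeed zero-dimensional Polish) character of the domain, so that $K$ survives as an honest Rosenthal representation. This works precisely because only countably many Borel sets are turned clopen and the Borel $\sigma$-algebra is left unchanged, which is exactly the content of the change-of-topology machinery; the inheritance of the Baire-one property by the finer topology, and its transport along the retraction $r$, are then routine verifications.
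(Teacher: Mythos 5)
Your proof is correct and follows essentially the same route as the paper: put $K$ in its defining form on a Polish space, use Kechris's change-of-topology machinery to refine the domain's topology so that the countably many functions of $D$ become continuous while every function of $K$ stays of the first Baire class, and then transfer everything to $\omega^\omega$ by composing with a continuous map, which is a homeomorphism for the pointwise topologies. The only cosmetic difference is that the paper composes with an arbitrary continuous surjection $\pi:\omega^\omega\To X$ (which exists for any nonempty Polish $X$), whereas you take a zero-dimensional refinement, embed it as a closed set $C\subseteq\omega^\omega$, and compose with a retraction $r:\omega^\omega\To C$ --- a special case of the same device.
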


\begin{proof}
Suppose that $K$ is a compact set of first Baire class functions on the Polish space $X$. There is a finer Polish topology on $X$ for which all the functions in $D$ are continuous, cf. \cite[13A]{Kechris}. Find a continuous surjection $\pi:\omega^\omega\To X$ from $\omega^\omega$ onto $X$ with this new Polish topology. Just take $\tilde{D} = \{f\circ \pi : f\in D\}$ and $\phi(f) = f\circ \pi$. 
\end{proof}

\section{The $m$-adic tree}\label{sectiontree}

We consider a natural number as an ordinal, that equals its set of predecessors $m = \{0,1,\ldots,m-1\}$. Thus, $m\times m = \{0,1,\ldots,m-1\}\times \{0,1,\ldots,m-1\}$ denotes the cartesian product.

The $m$-adic tree is the set $m^{<\omega}$ of finite sequences of numbers from $m = \{0,1,\ldots,m-1\}$. The set $m^{\omega}$ is the set of infinite sequences of numbers from $m$, and $m^{\leq \omega} = m^{<\omega}\cup m^{\omega}$. The length of $s\in m^{\leq \omega}$, denoted as $|s|$, is the cardinality of its domain. Given $s=(s_0,\ldots,s_p)\in m^{<\omega}$ and $t=(t_0,t_1,\ldots)$ in $m^{\leq\omega}$, we can construct the concatenation $s^\frown t = (s_0,\ldots,s_p,t_0,t_1,\ldots)$. We write $s\leq t$ if there exists $r\in m^{\leq\omega}$ such that $t = s^\frown r$. When $r=(i)$ is a sequence with a single number, we write $s^\frown i$ instead of $s^\frown (i)$ for short. For any $s,t\in m^{\leq \omega}$, we denote by $s\wedge t$ the largest $r$ such that $r\leq t$ and $r\leq s$. If $r = s\wedge t \not\in \{s,t\}$ then there exist two different $i,j\in m$ such that $r^\frown i \leq s$ and $r^\frown j \leq t$. In that case, we define the incidence as $inc(s,t) = (i,j)$. If we have $i\in m$ such that $s^\frown i \leq t$, then we define $inc(t,s) = (i,i)$. The well order $\prec$ on $n^{<\omega}$ is given by $s\prec t$ if either $|s|<|t|$, or $|s|=|t| = p$ and $n^{p} s_0 + n^{p-1}s_1 + \ldots < n^p t_0 + n^{p-1}t_1 + \ldots$.

We are going to follow the approach in \cite{serbiansurvey} to study the \emph{first-move combinatorics} of $m^{<\omega}$. A set $A\subset m^{<\omega}$ will be said to be meet-closed if $t\wedge s\in A$ whenever $t\in A$ and $s\in A$. The meet-closure of $A$ is $\langle\langle A\rangle\rangle = \{s\wedge t : s,t\in A\}$ is the least meet-closed set that contains $A$.\\

For $A,B\subset m^{<\omega}$, a bijection $f:A\To B$ is a first-move-equivalence if it is the restriction of a bijection $f:\langle\langle A\rangle\rangle\To \langle\langle B\rangle\rangle$ such that for every $t,s\in \langle\langle A\rangle\rangle$
\begin{enumerate}
\item $f(t\wedge s) = f(t)\wedge f(s)$
\item $f(t) \prec f(s)$ if and only if $t\prec s$
\item If $i\in n$ is such that $t^\frown i \leq s$, then $f(t)^\frown i \leq f(s)$.
\end{enumerate}

For the third condition, we can also write that $inc(s,t) = inc(f(s),f(t))$ for all $s,t$. The sets $A$ and $B$ are called first-move-equivalent if there is a first-move-equivalence between them. In this case, we write $A\approx B$. There are several equivalence classes for this relation that we will use at some point:

A set $A\subset m^{<\omega}$ is a first-move subtree if $A\approx m^{<\omega}$.

An $(i,j)$ comb is a subset $A\subset m^{<\omega}$ such that
$$A\approx \{(j), (iij), (iiiij), (iiiiiij),\ldots\}.$$
An $(i,j,k,l)$-double comb is a subset $A\subset m^{<\omega}$ such that
$$A \approx \{(j), (iikkj), (iikkiikkj),\ldots\}\cup \{(iil),(iikkiil),(iikkiikkiil),\ldots\}$$
Notice that an $(i,j,k,l)$-comb is a union of an $(i,j)$-comb and a $(k,l)$-comb lying on the same branch.
An $(u,v)$-splitted $(i,j,k,l)$-double comb is a subset $A\subset m^{<\omega}$ such that
$$ A \approx \{(uj), (uiij), (uiiiij),\ldots\}\cup \{(vl), (vkkl), (vkkkkl),\ldots\}.$$
This is now the union of an $(i,j)$-comb and a $(k,l)$-comb lying on different branches. Let us also say that a function $f:A\To B$ is a first-move embedding if it is a first-move equivalence between $A$ and the range $f(A)$.

The following theorem is related to Milliken's partition theorem for trees\cite{Milliken}. A proof of it can be found in \cite{serbiansurvey} based on \cite{IHES}:

\begin{thm}\label{strongRamsey}
Fix a set $A_0\subset n^{<\omega}$, and a partition $\{A\subset n^{<\omega} : A\approx A_0\} = P_1\cup\cdots\cup P_k$ into finitely many sets with the property of Baire. Then there exists a first-move subtree $T\subset n^{<\omega}$ such that the family $\{A\subset T : A\approx A_0\}$ is contained in a single piece of the partion.
\end{thm}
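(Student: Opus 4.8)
The plan is to prove Theorem~\ref{strongRamsey} as a consequence of Milliken's partition theorem for strong subtrees, using the notion of first-move-equivalence as the bridge between the two settings.

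First I would recall the statement of Milliken's theorem: if one colours the collection of $k$-element strongly embedded subtrees (or level sets) of a fixed finitely-branching tree with finitely many pieces, each having the property of Baire, then there is a strong subtree on which the colouring is monochromatic. The key observation I would establish is that the first-move-equivalence type of a finite set $A \subset m^{<\omega}$ is completely determined by the relative meet-structure of $\langle\langle A\rangle\rangle$: namely by the isomorphism type of the finite meet-closed set together with the data recording, at each meet point, which immediate successor directions the elements of $A$ pass through, and how the elements are ordered by $\prec$. Conditions (1)--(3) in the definition say precisely that a first-move-equivalence preserves exactly this combinatorial skeleton. The point is that within a strong subtree $T$ (one that is first-move-equivalent to $m^{<\omega}$), any two copies of a fixed pattern $A_0$ that agree on this skeleton are first-move-equivalent, and conversely.

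The main step is to translate the hypothesis into a colouring to which Milliken applies. Given the partition $\{A : A \approx A_0\} = P_1 \cup \cdots \cup P_k$ into Baire pieces, I would define a colouring $c$ on the relevant class of configurations inside $m^{<\omega}$ by sending each $A \approx A_0$ to the index $i$ with $A \in P_i$. Since first-move-equivalent sets are parametrized by a finite amount of combinatorial data, there are only finitely many ``shapes'' of configurations realizing the type $A_0$ relative to a strong subtree, so after passing to a strong subtree (via Theorem~\ref{strongRamsey}'s analogue for shapes, or by a finite iteration) I may assume all copies of $A_0$ have a common shape. Then the colouring $c$ descends to a finite Baire-measurable colouring of strongly embedded subtrees of a fixed size, and Milliken's theorem produces a strong subtree $T \subset m^{<\omega}$, i.e. a first-move subtree, on which $c$ is constant. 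On such $T$ every $A \subset T$ with $A \approx A_0$ receives the same colour, which is exactly the conclusion that $\{A \subset T : A \approx A_0\}$ lies in a single piece $P_i$.

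The hard part will be the bookkeeping that reduces the first-move-equivalence classes to the ``level-set'' or ``strong-subtree'' configurations to which Milliken's theorem directly applies, since a first-move-equivalence class need not consist of elements sitting at prescribed levels, and the meet-structure may be spread across many levels. This requires carefully encoding the skeleton of $\langle\langle A_0\rangle\rangle$ — its meet points, the incidence data $inc(s,t)$, and the $\prec$-ordering — as a finite template, and verifying that passing to a strong subtree preserves templates while keeping the colouring Baire-measurable. Once this encoding is set up, the argument is a routine application of Milliken's theorem; I would therefore lean on the detailed treatment in \cite{serbiansurvey} and \cite{IHES} for the verification that first-move-equivalence types correspond exactly to strong-subtree configurations of the Milliken framework.
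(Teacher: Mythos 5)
You should first be aware that the paper contains no proof of Theorem~\ref{strongRamsey} at all: it is quoted as a known result, with the proof attributed to \cite{serbiansurvey} (based on \cite{IHES}) and described only as ``related to'' Milliken's theorem. So there is no internal argument to compare against, and your proposal has to stand on its own; it does not, because of the following gap. Your entire reduction treats a copy of $A_0$ as a \emph{finite} configuration: you speak of colouring ``$k$-element strongly embedded subtrees'', of first-move-equivalent sets being ``parametrized by a finite amount of combinatorial data'', and of ``finitely many shapes'', so that the colouring ``descends to a finite Baire-measurable colouring of strongly embedded subtrees of a fixed size''. But $A_0$ is an arbitrary subset of $n^{<\omega}$, and in every application in this paper it is \emph{infinite}: $(i,j)$-combs, double combs, and split double combs. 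For infinite $A_0$ the family $\{A : A\approx A_0\}$ is a genuinely uncountable subset of the Polish space $2^{n^{<\omega}}$, the copies cannot be encoded by strong subtrees of a fixed finite height, and the finite Milliken theorem gives nothing. What is needed is the infinite-dimensional Milliken theorem for the topological Ramsey space of infinite strong subtrees, applied to Souslin/Baire-measurable colourings; indeed, the Baire-property hypothesis in the statement is exactly the signature of that infinite-dimensional setting (for finite $A_0$ the hypothesis would be essentially vacuous, since the space of finite configurations is countable). Your sketch never engages with this, which is where all the difficulty lies: one must exhibit a continuous (or at least measurable) canonical correspondence between infinite strong subtrees and copies of the infinite pattern $A_0$, compatible with meets, incidences and the $\prec$-order, and check that monochromaticity transfers back.

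The second problem is that the step you label as ``the hard part'' --- the verification that first-move-equivalence types correspond to configurations in the Milliken framework --- is precisely the content of the theorem, and you resolve it by leaning on \cite{serbiansurvey} and \cite{IHES}, which are the very references where the theorem is proved. As an independent argument this is circular: what remains after the deferral is only the statement of a strategy, not a proof. To make the proposal genuine you would need either to carry out the encoding of infinite first-move types into the strong-subtree Ramsey space explicitly (handling the fact that meet-closures of copies of $A_0$ spread across infinitely many levels and need not be level-aligned), or to prove the partition result directly by a fusion/Halpern-L\"{a}uchli style argument, as is done in \cite{IHES}.
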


Here, the property of Baire means that the sets $P_i$ belong to the $\sigma$-algebra generated by the open sets and the meager sets of $2^{n^{<\omega}}$, endowed with the product topology that makes it homeomorphic to the Cantor set.

Let $i,j\in m$, $x\in m^\omega$, and $\{s^0,s^1,\ldots\}\subset m^{\leq\omega}$. We say that $\{s^0,s^1,\ldots\}$ is an $(i,j)$-sequence over $x$ if the two following conditions hold:
\begin{enumerate}
\item $\lim_{n\rightarrow\infty} |s^n \wedge x| = \infty$,
\item $inc(x,x_n) = (i,j)$ for all $n$. 
\end{enumerate}

If $\{s^0,s^1,\ldots\}\subset m^\omega$ is an $(i,j)$-sequence over $x$, then it converges to $x$ in the natural product topology of $m^\omega$. Notice that an $(i,j)$-comb is always an $(i,j)$-sequence over some $x\in m^\omega$.

\begin{lem}[Lemma 7 of \cite{stronggaps}]\label{densecombs} Every infinite subset of $m^{<\omega}$ contains an infinite $(i,j)$-comb for some $(i,j)\in m\times m$.
\end{lem}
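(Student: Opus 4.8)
The plan is to first locate a single branch that $S$ accumulates on, and then to read off a comb from the incidences of the members of $S$ with that branch; the strong Ramsey statement of Theorem~\ref{strongRamsey} will not be needed, only elementary pigeonhole. So let $S\subseteq m^{<\omega}$ be infinite. I would build a branch $x\in m^\omega$ by the usual finitely-branching pigeonhole: having chosen the initial segment $x|_n$ of length $n$ so that infinitely many members of $S$ extend it, note that all but one of these have length $>n$ and hence extend $(x|_n)^\frown c$ for some $c\in m$; since $m$ is finite, one value of $c$ is used by infinitely many of them, and I set $x|_{n+1}=(x|_n)^\frown c$. This produces $x$ with the property that infinitely many elements of $S$ pass through every $x|_n$.

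Then I would split into two cases according to how $S$ sits relative to $x$. If infinitely many elements of $S$ are initial segments of $x$, they form a chain $x|_{n_0}<x|_{n_1}<\cdots$; here every incidence $inc(x|_{n_p},x|_{n_q})$ (for $p>q$) equals $(x_{n_q},x_{n_q})$, so after a pigeonhole fixing the value $x_{n_q}=i$ on an infinite subset I obtain a chain all of whose incidences are $(i,i)$, which is an $(i,i)$-comb. Otherwise only finitely many members of $S$ lie on $x$, and I can choose distinct $s^k\in S$ that branch off $x$ at strictly increasing heights $m_k=|s^k\wedge x|\to\infty$. Each such $s^k$ has a well-defined off-diagonal incidence $inc(x,s^k)=(i_k,j_k)$ with $i_k\neq j_k$, and a pigeonhole on the finite set $m\times m$ lets me pass to an infinite subset on which $(i_k,j_k)=(i,j)$ is constant. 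At this point the chosen points form an $(i,j)$-sequence over $x$, and their meet structure already matches that of the canonical comb: for $p<q$ one computes $s^p\wedge s^q=x|_{m_p}$ and $inc(s^q,s^p)=(i,j)$.

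The one genuine subtlety, and the step I expect to need the most care, is that an $(i,j)$-sequence over $x$ need not literally be first-move-equivalent to the canonical comb, because condition (2) in the definition of $\approx$ demands that the bijection preserve the well-order $\prec$, which orders nodes first by length. The lengths $|s^k|$ of the teeth and the heights $m_k$ of the branch points are so far uncontrolled, whereas in the canonical comb the spine nodes $i^{2r}$ and the teeth $i^{2r}j$ have lengths perfectly interleaved as $0<1<2<\cdots$. I would therefore pass to one further subsequence, chosen greedily so that $m_{p_0}<|s^{p_0}|<m_{p_1}<|s^{p_1}|<\cdots$, which is possible precisely because $m_k\to\infty$ while each $|s^k|$ is finite. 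Along this subsequence all nodes of the meet-closure, namely the teeth together with the spine points $x|_{m_{p_r}}$, have strictly increasing and pairwise distinct lengths interleaved exactly as in the canonical comb, so $\prec$ coincides with the length order on both sides and the bijection $s^{p_r}\mapsto i^{2r}j$, $x|_{m_{p_r}}\mapsto i^{2r}$ verifies conditions (1)--(3). This exhibits an $(i,j)$-comb inside $S$ and completes the argument; note that the chain case genuinely requires the diagonal $(i,i)$-combs, since an infinite chain contains no infinite antichain and hence no off-diagonal comb.
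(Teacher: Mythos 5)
Your proof is correct. A point of comparison worth making explicit: this paper does not prove the lemma at all --- it is imported verbatim as Lemma 7 of \cite{stronggaps} --- so there is no internal argument to measure yours against; what you have written is a legitimate self-contained proof, and it follows the natural route (essentially that of the cited source): a K\"onig-type pigeonhole produces a branch $x$ on which $S$ accumulates, the dichotomy ``infinitely many points of $S$ on $x$ / off $x$'' yields respectively the diagonal combs (a chain with constant first move $i$ is exactly an $(i,i)$-comb) and, after a second pigeonhole on incidences, an $(i,j)$-sequence over $x$ with $i\neq j$. You were also right to single out the genuinely delicate step: an $(i,j)$-sequence over $x$ need not be first-move-equivalent to the canonical comb, because condition (2) of the definition of $\approx$ requires the bijection on meet-closures to preserve the length-first well-order $\prec$. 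Your final thinning to a subsequence with $m_{p_0}<|s^{p_0}|<m_{p_1}<|s^{p_1}|<\cdots$ is exactly what is needed so that the spine points $x|_{m_{p_r}}$ and teeth $s^{p_r}$ interleave in $\prec$ as $i^{2r}$ and $i^{2r}j$ do, making the bijection satisfy conditions (1)--(3); many write-ups of this kind of statement silently skip that verification.
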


The families of sets $\mathcal{I}_1,\ldots,\mathcal{I}_{n}$ are said to be countably separated if there exists a countable family of sets $\mathcal{C}$ such that for every $a_1\in \mathcal{I}_,\ldots,a_n\in \mathcal{I}_n$ there exist $b_1,\ldots,b_n\in \mathcal{C}$ such that $a_i\setminus b_i$ is finite for all $i$, and $\bigcap_i b_i=\emptyset$.

\begin{lem}[Proposition 6 of \cite{stronggaps}]\label{notcountablyseparated}
Let $\mathfrak{P}$ be a partition of $m\times m$. For every $P\in\mathfrak{P}$, let $\mathcal{I}_P$ be the family of all $(i,j)$-combs of $m^{<\omega}$ such that $(i,j)\in P$. The families $\{\mathcal{I}_P : P\in\mathfrak{P}\}$ are not countably separated.

\end{lem}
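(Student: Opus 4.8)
The plan is to argue by contradiction. Suppose the families $\{\mathcal{I}_P : P\in\mathfrak{P}\}$ were countably separated, and fix a countable family $\mathcal{C}=\{C_k : k<\omega\}$ witnessing this; after intersecting with $m^{<\omega}$ we may assume each $C_k\subseteq m^{<\omega}$. The guiding idea is to use the strong Ramsey theorem (Theorem~\ref{strongRamsey}) to pass to a first-move subtree on which the behaviour of every $C_k$ is trivialized, and then to observe that on such a subtree no finite selection of the $C_k$'s can have empty intersection while still almost covering a comb of each type.

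First I would homogenize one $C_k$ at a time. For fixed $k$, colour the nodes of $m^{<\omega}$ according to membership in $C_k$. Since a single node is the only representative of its first-move class, Theorem~\ref{strongRamsey} applied with $A_0$ a singleton (a clopen, hence Baire, two-colouring) produces a first-move subtree on which either every node lies in $C_k$ or no node does. Next I would iterate this over all $k<\omega$ and fuse the resulting decreasing sequence of first-move subtrees into a single first-move subtree $T$, using the Milliken-type fusion techniques underlying \cite{IHES,serbiansurvey}. The outcome I aim for is: for every $k$ there is a level $N_k$ such that all nodes of $T$ lying above the $N_k$-th level of $T$ either all belong to $C_k$ or all avoid $C_k$. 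Let $I$ be the set of those $k$ for which the former happens, so that $k\in I$ means $C_k$ contains every sufficiently deep node of $T$.

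Finally I would derive the contradiction. Since $T\approx m^{<\omega}$, for each piece $P$ we may choose $(i,j)\in P$ and an $(i,j)$-comb $a_P\subseteq T$. By the separation hypothesis there are $b_P=C_{k_P}\in\mathcal{C}$ with $a_P\setminus b_P$ finite and $\bigcap_P b_P=\emptyset$. As $a_P$ is an infinite subset of $T$ whose nodes reach arbitrarily deep levels, and $a_P\setminus C_{k_P}$ is finite, some node of $T$ above the $N_{k_P}$-th level lies in $C_{k_P}$; by homogeneity this forces $k_P\in I$, so $C_{k_P}$ contains every node of $T$ above level $N_{k_P}$. Putting $N=\max_P N_{k_P}$, which is finite because $\mathfrak{P}$ is finite, every node of $T$ above level $N$ lies in $\bigcap_P C_{k_P}$, contradicting $\bigcap_P b_P=\emptyset$.

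The step I expect to be the main obstacle is the fusion: Theorem~\ref{strongRamsey} homogenizes for one colouring at a time, and amalgamating countably many applications into a single first-move subtree while preserving node-homogeneity above a level is exactly where the Halpern--L\"auchli/Milliken-type machinery of \cite{IHES,serbiansurvey} is indispensable. A technical point to verify there is that a first-move subtree can be taken entirely inside a prescribed node-monochromatic set and that the fusion keeps control of the thresholds $N_k$. Should the singleton colouring prove awkward to fuse, I would instead colour the combs $A\approx A_0$ (for each fixed comb type in $P$ and each $C_k$) by whether $A\setminus C_k$ is finite; this is a Borel colouring, and on the homogeneous subtree Lemma~\ref{densecombs} then yields that $k\in I$ forces $C_k$ to almost contain the entire comb region of its type over every branch of $T$, at the cost of an additional argument to locate a single common node of the $b_P$'s.
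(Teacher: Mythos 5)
First, note that the paper itself gives no proof of this lemma; it is imported verbatim as Proposition~6 of \cite{stronggaps}. So your proposal must stand on its own, and there it has a genuine gap: the fusion outcome announced in your second paragraph is false. You claim that for an \emph{arbitrary} countable family $\{C_k\}$ there is a single first-move subtree $T$ such that, for every $k$, all nodes of $T$ above some level $N_k$ either all belong to $C_k$ or all avoid $C_k$. Take $C_k=\{u\in m^{<\omega} : |u|>k,\ u_k=0\}$. If $T=\{t_s : s\in m^{<\omega}\}$ is \emph{any} first-move subtree with root $r$ and $k=|r|$, then condition (3) of first-move equivalence gives $t_s\geq r^\frown s_0$ for every $s\neq\emptyset$, so $t_s\in C_k$ exactly when $s_0=0$; hence \emph{every} level of $T$ beyond the root contains both nodes inside and outside $C_k$, and no $N_k$ exists. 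The obstruction is structural, not a defect of the Halpern--L\"{a}uchli/Milliken machinery: fusion must commit to the finitely many low nodes of $T$ before later colourings are treated, so Theorem~\ref{strongRamsey} can only be applied separately inside each of the finitely many subtrees sitting above already-committed nodes, and these finitely many pieces may receive \emph{different} colours. What fusion genuinely yields is this piecewise homogeneity, and with only that your final count collapses: the set $I$ is no longer well defined, and $C_{k_P}$ is only guaranteed to contain all deep nodes of the particular piece housing the tail of $a_P$; for different $P$ these pieces can be pairwise disjoint subtrees, so no common point of $\bigcap_P C_{k_P}$ is produced. Your fallback paragraph (colouring combs by almost-containment) runs into exactly the same problem, which you acknowledge but defer --- and that deferred step is the entire content of the lemma.

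The good news is that the repair is short: keep the piecewise homogenization, but do not pick the combs $a_P$ arbitrarily in $T$. Fix a branch $z\in m^\omega$ using every direction infinitely often and let $a_P$ consist of the nodes $t_{(z|_l)^\frown j_P}$ (resp.\ $t_{z|_l}$ when $i_P=j_P$) for those $l$ with $z_l=i_P$; these are genuine $(i_P,j_P)$-combs since first-move embeddings preserve incidences. Now the tail of every $a_P$ lies in the pieces along the single branch $z$, which are linearly nested; piecewise homogeneity together with the finiteness of $a_P\setminus C_{k_P}$ forces $C_{k_P}$ to contain every node of $T$ above the index $z|_{N_{k_P}}$, and then $t_{z|_N}$ with $N=\max_P N_{k_P}$ lies in $\bigcap_P C_{k_P}$, the desired contradiction. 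Alternatively, one can dispense with Ramsey machinery altogether: enumerate the countably many tuples $(C_{k_P})_{P\in\mathfrak{P}}$ with empty intersection and build the combs $(a_P)_P$ over a common branch by diagonalization, at each stage adjoining to some comb a node outside the corresponding set of the current tuple; such a node must exist, since any node positioned so that it could join every comb would otherwise witness that the tuple's intersection is nonempty.
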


\section{The spaces $K_1(\mathfrak{P})$}\label{sectionK1}

Given a partition $\mathfrak{P}$ of $m\times m$, we will construct a first countable Rosenthal compact space $K_1(\mathfrak{P})$. For this, we consider the Polish space $X_{\mathfrak{P}} := m^{<\omega}\cup m^{\omega} \times \mathfrak{P}$, where

\begin{itemize}
\item $m^{<\omega}$ is considered as a countable discrete space,
\item $m^{\omega}$ is considered with its product topology as the countable power of a finite discrete space, which makes it homeomorphic to the Cantor set,
\item $\mathfrak{P}$ is considered as a finite discrete space,
\item $ m^{\omega} \times \mathfrak{P}$ is given the product topology,
\item  $X_{\mathfrak{P}} = m^{<\omega}\cup m^{\omega} \times \mathfrak{P}$ is given the topology where each of the two sets $m^{<\omega}$ and $ m^{\omega} \times \mathfrak{P}$ are open, and endowed with the respective topologies indicated above.
\end{itemize}

Now, for every $s\in m^{<\omega}$ we consider a function $\mathbf{f}_s: X_\mathfrak{P}\To \{0,1\}$ given by
$$\mathbf{f}_s(t) =  \begin{cases} 1 \text{ if } t\leq s \\ 0 \text{ otherwise } \end{cases} \text{ for } s,t\in m^{<\omega},$$
$$\mathbf{f}_{s}(y,Q) = \begin{cases}
 1 \text{ if }  inc(y,s) \in Q\\
 0 \text{ if } inc(y,s) \not\in Q
   \end{cases} \text{ for } s\in m^{<\omega}, \ (y,Q)\in m^\omega\times \mathfrak{P}$$
   
\begin{defn}
The compact space $K_1(\mathfrak{P})$ is the pointwise closure of the functions $\{\mathbf{f}_s : s\in m^{<\omega}\}$ in $\mathbb{R}^{X_{\mathfrak{P}}}$.
\end{defn}

In order to describe all the points of $K_1(\mathfrak{P})$, for every $(x,P)\in m^\omega \times \mathfrak{P}$, we consider a new function $\mathbf{f}_{(x,P)}:X_\mathfrak{P} \To \{0,1\}$ given by

$$\mathbf{f}_{(x,P)}(t) = \begin{cases} 1 \text{ if } t\leq x \\ 0 \text{ otherwise } \end{cases} \text{ for } (x,P)\in m^\omega\times \mathfrak{P}, t\in m^{<\omega}$$
$$\mathbf{f}_{(x,P)}(y,Q) = \begin{cases}
 1 \text{ if } x=y, \ P = Q \\
 0 \text{ if } x=y, \ P\neq Q\\
 1 \text{ if } x\neq y, inc(y,x) \in Q\\
 0 \text{ if } x\neq y, inc(y,x) \not\in Q
 
  \end{cases} \text{ for } (x,P), (y,Q)\in m^\omega\times \mathfrak{P}$$

\begin{prop}\label{convergenceK1} Fix $(i,j)\in P\in \mathbb{P}$.
\begin{enumerate}
\item If $\{s_0,s_1,\ldots\}\subset m^{<\omega}$ is an $(i,j)$-sequence over $x\in m^\omega$, then $$\lim_k \mathbf{f}_{s_k} = \mathbf{f}_{(x,P)}.$$
\item  If $\{x_0,x_1,\ldots\}\subset m^{\omega}$ is an $(i,j)$-sequence over $x\in m^\omega$, and we choose any $P_k\in \mathbb{P}$, then $$\lim_k \mathbf{f}_{(x_k,P_k)} = \mathbf{f}_{(x,P)}.$$
\end{enumerate}
\end{prop}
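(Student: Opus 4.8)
The plan is to verify pointwise convergence directly, evaluating the functions at each of the two kinds of points of $X_{\mathfrak{P}} = m^{<\omega}\cup m^\omega\times\mathfrak{P}$. Since both limits are claimed to equal $\mathbf{f}_{(x,P)}$ and the two statements are of the same flavour, I would treat them in parallel: write $g_k$ for the approximating function (so $g_k=\mathbf{f}_{s_k}$ in (1) and $g_k=\mathbf{f}_{(x_k,P_k)}$ in (2)), and write $u_k$ for the corresponding member of the $(i,j)$-sequence over $x$ (so $u_k=s_k\in m^{<\omega}$ in (1) and $u_k=x_k\in m^\omega$ in (2)). The two hypotheses I will lean on are the depth condition $|u_k\wedge x|\to\infty$ and the incidence condition $inc(x,u_k)=(i,j)$ for all $k$, together with the fact that $\mathfrak{P}$ is a partition, so that $(i,j)\in Q$ holds if and only if $Q=P$.

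First I would check convergence at a node $t\in m^{<\omega}$. Here $g_k(t)$ is $1$ or $0$ according as $t\leq u_k$ or not, and the target $\mathbf{f}_{(x,P)}(t)$ is $1$ or $0$ according as $t\leq x$ or not. Using $|u_k\wedge x|\to\infty$, for all large $k$ the words $u_k$ and $x$ agree on their first $|t|$ coordinates (and $|u_k|\geq|u_k\wedge x|\geq|t|$), whence $t\leq u_k$ is equivalent to $t\leq x$; this gives $g_k(t)\to\mathbf{f}_{(x,P)}(t)$.

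Next I would check convergence at a point $(y,Q)\in m^\omega\times\mathfrak{P}$, splitting into $y\neq x$ and $y=x$. When $y\neq x$, let $r=y\wedge x$ and let $a\neq b$ be the coordinates with $r^\frown a\leq y$ and $r^\frown b\leq x$, so $inc(y,x)=(a,b)$. For large $k$ we have $|u_k\wedge x|>|r|$, so $u_k$ follows $x$ past the splitting node $r$; consequently $u_k\wedge y=r$ and $inc(y,u_k)=(a,b)=inc(y,x)$, and in particular $u_k\neq y$. Reading off the defining formulas, $g_k(y,Q)$ and $\mathbf{f}_{(x,P)}(y,Q)$ both equal the truth value of $inc(y,x)\in Q$, so they agree for large $k$; note that in case (2) the auxiliary labels $P_k$ never enter, precisely because $u_k=x_k\neq y$. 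When $y=x$, the incidence condition gives $inc(x,u_k)=(i,j)$ for every $k$, so $g_k(x,Q)$ equals the truth value of $(i,j)\in Q$, which by the partition property equals the truth value of $Q=P$; this is exactly $\mathbf{f}_{(x,P)}(x,Q)$, and here the two sides agree for all $k$, not merely eventually.

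The genuinely load-bearing step is the evaluation at $(x,Q)$: this is the only place where the label $P$ of the limit function is recovered, and it is recovered solely from the incidence $(i,j)$ together with $\mathfrak{P}$ being a partition; everywhere else the convergence is forced by $|u_k\wedge x|\to\infty$ in a way insensitive to the partition. I expect no real obstacle beyond bookkeeping; the one point needing a moment of care is confirming, in case (2), that each $x_k$ genuinely differs from $x$, so that the ``$x_k=y$'' clause in the definition of $\mathbf{f}_{(x_k,P_k)}$ is never triggered at $y=x$. This is immediate because $inc(x,x_k)=(i,j)$ is assumed to be defined, which already forces $x_k\neq x$; indeed an infinite word cannot be a proper initial segment of another infinite word, so the incidence is of branching type and $i\neq j$.
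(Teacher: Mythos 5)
Your proof is correct and takes essentially the same approach as the paper's: a direct pointwise verification at the nodes $t\in m^{<\omega}$, at the points $(x,Q)$ on the branch $x$ (where the incidence hypothesis and the fact that $\mathfrak{P}$ is a partition pin down the label $P$), and at the points $(y,Q)$ with $y\neq x$ (where the incidence $inc(y,u_k)$ eventually stabilizes to $inc(y,x)$). The only differences are organizational: you run (1) and (2) in parallel with unified notation, and you make explicit the small point that $x_k\neq x$ in case (2), which the paper uses implicitly.
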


\begin{proof}
For statement (1), first notice that $\mathbf{f}_{s_k}(t) \To 1$ if $t\leq x$, and $\mathbf{f}_{s_k}(t)\To 0$ otherwise. So $\mathbf{f}_{s_k}(t) \To \mathbf{f}_{(x,P)}(t)$ for $t\in m^{<\omega}$. Second, notice that $\mathbf{f}_{s_k}(x,P) = 1$ because $inc(x,s_k) = (i,j) \in P$, so we also have $\mathbf{f}_{s_k}(x,P) \To \mathbf{f}_{(x,P)}(x,P)$. Third, if $Q\neq P$, we have $\mathbf{f}_{s_k}(x,Q) =0$ because  $inc(x,s_k) = (i,j) \not\in Q$, so again $\mathbf{f}_{s_k}(x,Q) \To \mathbf{f}_{(x,P)}(x,Q)$. Fourth, if we consider $(y,Q)$ with $x\neq y$, then $\mathbf{f}_{s_k}(y,Q)$ depends on the incidence of $y$ and $s_k$. But from a moment on, this incidence coincides with that of $y$ and $x$, and this makes $\mathbf{f}_{s_k}(y,Q) \To \mathbf{f}_{(x,P)}(y,Q)$ work again. Statement (2) is proven in a similar way. First, $\mathbf{f}_{(x_k,P_k)}(t)$ is 1 if $t\leq x_k$ and 0 otherwise. Notice if $t\leq x$, then $t\leq x_k$ for all but finitely many $k$'s, and if $t\not\leq x$, then $t\not\leq x_k$ for all but finitely many $k$'s. Therefore $\mathbf{f}_{(x_k,P_k)}(t) \To \mathbf{f}_{(x,P)}(t)$. Second, $\mathbf{f}_{(x_k,P_k)}(x,P) = 1$ because $inc(x,x_k) = (i,j) \in P$, so $\mathbf{f}_{(x_k,P_k)}(x,P)\To \mathbf{f}_{(x,P)}(x,P)$.  Third, if $Q\neq P$, then  $\mathbf{f}_{(x_k,P_k)}(x,Q) = 0$ because $inc(x,x_k) = (i,j)\not\in Q$, so again $\mathbf{f}_{(x_k,P_k)}(x,Q) \To \mathbf{f}_{(x,P)}(x,Q)$. Finally, for $y\neq x$, $\mathbf{f}_{(x_k,P_k)}(y,Q)$ depends on $inc(y,x_k)$, but $inc(y,x_k) = inc(y,x)$ for all but finitely many $k$'s. 
\end{proof}

\begin{prop}\label{K1isRosenthal}
$K_1(\mathfrak{P})$ is a separable Rosenthal compactum.
\end{prop}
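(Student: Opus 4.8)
The plan is to verify the two defining properties separately: that $K_1(\mathfrak{P})$ is separable, and that it is a Rosenthal compactum, i.e. a pointwise compact set of first Baire class functions on a Polish space.

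\emph{Separability.} This should be the easy half. By construction $K_1(\mathfrak{P})$ is the pointwise closure of the countable set $\{\mathbf{f}_s : s\in m^{<\omega}\}$ in $\mathbb{R}^{X_{\mathfrak{P}}}$. So the countable set $\{\mathbf{f}_s : s\in m^{<\omega}\}$ is by definition dense in $K_1(\mathfrak{P})$, and separability is immediate. (One should also note that $K_1(\mathfrak{P})$ is genuinely compact, being a pointwise-closed subset of the compact cube $\{0,1\}^{X_{\mathfrak{P}}}$, since all the functions $\mathbf{f}_s$ take values in $\{0,1\}$ and this is a closed condition preserved under pointwise limits.)

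\emph{Rosenthal.} Here the space $X_{\mathfrak{P}} = m^{<\omega}\cup(m^{\omega}\times\mathfrak{P})$ is Polish: it is the topological sum of a countable discrete space and a product of the Cantor set with a finite discrete space, hence a locally compact, second countable, metrizable space. The main work is to show that every function in $K_1(\mathfrak{P})$ is of the first Baire class on $X_{\mathfrak{P}}$, i.e. a pointwise limit of a sequence of continuous functions. First I would identify exactly which functions lie in $K_1(\mathfrak{P})$: I claim they are precisely the $\mathbf{f}_s$ for $s\in m^{<\omega}$ together with the $\mathbf{f}_{(x,P)}$ for $(x,P)\in m^{\omega}\times\mathfrak{P}$. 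That these are all limit points follows from Proposition~\ref{convergenceK1}, together with Lemma~\ref{densecombs}, which guarantees that every infinite subset of $m^{<\omega}$ contains an $(i,j)$-comb, hence that any cluster point of the $\mathbf{f}_s$ arises as a limit along such a comb and so has the form $\mathbf{f}_{(x,P)}$; conversely one checks no other functions appear, so the listed functions form a closed set.

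It remains to check each such function is first Baire class on $X_{\mathfrak{P}}$. The functions $\mathbf{f}_s$ with $s\in m^{<\omega}$ are in fact continuous: on the open piece $m^{<\omega}$ (which is discrete) every function is continuous, and on the open piece $m^{\omega}\times\mathfrak{P}$ the value $\mathbf{f}_s(y,Q)$ depends only on $inc(y,s)$ and $Q$, which is a locally constant, hence continuous, function of $(y,Q)$ since $s$ is finite. For the limit functions $\mathbf{f}_{(x,P)}$, I would exhibit each as a pointwise limit of continuous functions: indeed, by Proposition~\ref{convergenceK1}(1), choosing any $(i,j)$-sequence $\{s_k\}$ over $x$ with $(i,j)\in P$ (which exists because $P$ is nonempty), we have $\mathbf{f}_{(x,P)} = \lim_k \mathbf{f}_{s_k}$ pointwise, a limit of continuous functions, so $\mathbf{f}_{(x,P)}$ is of the first Baire class. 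This completes the verification that $K_1(\mathfrak{P})\subset\mathcal{B}_1(X_{\mathfrak{P}})$ is a pointwise compact set of first Baire class functions on the Polish space $X_{\mathfrak{P}}$.

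The step I expect to require the most care is the precise determination of the point-set of $K_1(\mathfrak{P})$ — showing that the closure adds exactly the functions $\mathbf{f}_{(x,P)}$ and nothing else. This needs a careful bookkeeping argument: one takes a net (or, using countable tightness, a sequence) $\mathbf{f}_{s_\alpha}$ converging to some $g$, extracts via Lemma~\ref{densecombs} a sub-comb on which the relevant incidences stabilize, and reads off from the four-case definition of $\mathbf{f}_{(x,P)}$ that $g$ must coincide with one of the listed functions. The convergence computations of Proposition~\ref{convergenceK1} do the heavy lifting once the comb is fixed, so the genuine obstacle is ensuring no stray cluster points escape this description.
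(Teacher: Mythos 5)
Your separability observation, the continuity of the $\mathbf{f}_s$, and the fact that each $\mathbf{f}_{(x,P)}$ is of the first Baire class (being a pointwise limit of a sequence of continuous functions along a comb) are all correct. The genuine gap is in the step you yourself flag as the crux: identifying the closure. The closure is taken in the product topology of $\mathbb{R}^{X_{\mathfrak{P}}}$, where a point in the closure of a countable set need not be the limit of any sequence from that set; it is in general only a net (ultrafilter) limit. Your comb-extraction argument applies only to convergent sequences, and your parenthetical appeal to ``countable tightness'' is circular: what you actually need is the Fr\'echet--Urysohn property of the closure, and for pointwise compact sets of functions this is precisely a consequence of the Bourgain--Fremlin--Talagrand theorem, which becomes available only after one knows the closure consists of first Baire class functions --- the very thing being proved. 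The danger is real, not bookkeeping: the coordinate projections $f_n(x)=x_n$ on the Polish space $\{0,1\}^{\omega}$ form a countable family of continuous functions whose pointwise closure contains a copy of $\beta\omega$; the extra points are indicators of non-measurable sets, hence not Baire-1, and none of them is a sequential limit of the $f_n$. So ``countable set of continuous functions on a Polish space'' by itself gives no control whatsoever over the closure, and no amount of careful case analysis of sequential limits can rule out such stray cluster points.

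The paper closes exactly this gap by running the argument in the opposite order. It uses the BFT theorem as a criterion: if a countable, pointwise bounded family of continuous functions on a Polish space has the property that every sequence in it admits a subsequence converging pointwise in $\mathbb{R}^{X}$, then its pointwise closure is a Rosenthal compactum. Lemma~\ref{densecombs} together with Proposition~\ref{convergenceK1} supplies precisely this sequential precompactness (every infinite set of indices contains an $(i,j)$-comb, along which the $\mathbf{f}_s$ converge to some $\mathbf{f}_{(x,P)}$), and that already finishes Proposition~\ref{K1isRosenthal}; no description of the point-set is needed at this stage. The exact identification of the closure --- what you call the main work --- is deliberately postponed to Proposition~\ref{K1description}, where the Fr\'echet--Urysohn property can be legitimately invoked because the space is by then known to be Rosenthal. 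If you insist on your order of argument, you must compute arbitrary ultrafilter limits of the $\mathbf{f}_s$ by hand (for instance, observing that $\{t : \{s : t\leq s\}\in\mathcal{U}\}$ is a chain, which is either finite, forcing $\mathcal{U}$ to be principal, or determines a branch $x$ and then a unique $P$ with $\{s : inc(x,s)\in P\}\in\mathcal{U}$); that is a workable but genuinely different proof from the one you wrote.
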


\begin{proof}
The functions $\mathbf{f}_s$ for $s\in m^{<\omega}$ are clearly continuous on the Polish space $X_{\mathfrak{P}}$. Combining this with Lemms~\ref{densecombs} and Proposition~\ref{convergenceK1},  we get that $\{\mathbf{f}_s : s\in m^{<\omega}\}$ is a countable family of continuous functions on a Polish space with the property that every subsequence contains a further subsequence that converges in $\mathbb{R}^{X_\mathfrak{P}}$. This implies that its closure is a Rosenthal compactum~\cite{BFT}.
\end{proof}

\begin{prop}\label{K1description} The function $X_\mathfrak{P} \To K_1(\mathfrak{P})$ given by $\xi\mapsto \mathbf{f}_\xi$ is a bijection. Thus,
$$K_1(\mathfrak{P}) = \{\mathbf{f}_s : s\in m^{<\omega}\} \cup \{\mathbf{f}_{(x,P)} : (x,P)\in m^\omega\times \mathfrak{P}\}.$$
The points $\mathbf{f}_s$ are isolated in $K_1(\mathfrak{P})$ and the points $\mathbf{f}_{(x.P)}$ are $G_\delta$-points, so $K_1(\mathfrak{P})$ is a first-countable space.
\end{prop}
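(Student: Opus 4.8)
The plan is to prove the statement in four steps: injectivity of $\xi\mapsto\mathbf{f}_\xi$, the identification of its image with $K_1(\mathfrak{P})$, the fact that each $\mathbf{f}_s$ is isolated, and the fact that each $\mathbf{f}_{(x,P)}$ is a $G_\delta$-point (whence first countability). I would dispatch injectivity by separating coordinates case by case. Two distinct $\mathbf{f}_s,\mathbf{f}_{s'}$ already differ on $m^{<\omega}$, since $\{t:t\leq s\}\neq\{t:t\leq s'\}$ when $s\neq s'$. Any $\mathbf{f}_s$ differs from every $\mathbf{f}_{(x,P)}$ because the former is supported on the finite set of initial segments of $s$, while the latter is $1$ on the infinitely many initial segments of $x$. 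Finally, for $\mathbf{f}_{(x,P)}$ versus $\mathbf{f}_{(x',P')}$: if $x\neq x'$ they differ on $m^{<\omega}$, while if $x=x'$ but $P\neq P'$ they differ at the coordinate $(x,P)$, where $\mathbf{f}_{(x,P)}(x,P)=1$ but $\mathbf{f}_{(x,P')}(x,P)=0$.

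For the description of $K_1(\mathfrak{P})$, recall that by Proposition~\ref{K1isRosenthal} and the Bourgain--Fremlin--Talagrand theorem the space is Fr\'echet--Urysohn and sequentially compact. Hence every point of the closure is the limit of a sequence $(\mathbf{f}_{s_k})$, and if this sequence is not eventually constant we may take the $s_k$ distinct. By Lemma~\ref{densecombs} the infinite set $\{s_k\}$ contains an infinite $(i,j)$-comb, which is an $(i,j)$-sequence over some $x\in m^\omega$; taking the block $P\in\mathfrak{P}$ with $(i,j)\in P$, Proposition~\ref{convergenceK1}(1) shows this subcomb converges to $\mathbf{f}_{(x,P)}$, so the full limit must be $\mathbf{f}_{(x,P)}$. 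This gives $K_1(\mathfrak{P})\subseteq\{\mathbf{f}_s\}\cup\{\mathbf{f}_{(x,P)}\}$. The reverse inclusion is the step I expect to be the main obstacle: the $\mathbf{f}_s$ lie in $K_1(\mathfrak{P})$ by definition, but to place a given $\mathbf{f}_{(x,P)}$ in the closure I must produce an $(i,j)$-sequence over $x$ with $(i,j)\in P$ and invoke Proposition~\ref{convergenceK1}(1). The delicate point is that the first coordinate $i$ of such a pair is forced to be a direction that $x$ takes at arbitrarily deep levels, so realizing $\mathbf{f}_{(x,P)}$ as a genuine limit of the $\mathbf{f}_s$ rests on the combinatorial compatibility of the block $P$ with the branch $x$; this is the step I would treat most carefully.

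To see that $\mathbf{f}_s$ is isolated I would use the finite-coordinate set $U_s=\{g\in K_1(\mathfrak{P}):g(s)=1\text{ and }g(s^\frown i)=0\text{ for all }i\in m\}$. Each condition $g(c)=1$ or $g(c)=0$ is clopen in $K_1(\mathfrak{P})$, since all functions take values in $\{0,1\}$ and evaluation is continuous, and there are only finitely many such conditions. Clearly $\mathbf{f}_s\in U_s$. Conversely, if $g=\mathbf{f}_{s'}$ or $g=\mathbf{f}_{(x,P)}$ satisfies $g(s)=1$ and is not $\mathbf{f}_s$, then $s$ is a proper initial segment of $s'$ (respectively of $x$), so $s^\frown i\leq s'$ (respectively $s^\frown i\leq x$) for exactly one $i\in m$, forcing $g(s^\frown i)=1$, a contradiction. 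Hence $U_s=\{\mathbf{f}_s\}$.

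For the last step I would write $\{\mathbf{f}_{(x,P)}\}=\bigcap_{t\in m^{<\omega}}\{g:g(t)=\mathbf{f}_{(x,P)}(t)\}\cap\{g:g(x,P)=1\}$, a countable intersection of clopen sets. The conditions indexed by $t\in m^{<\omega}$ force the support on $m^{<\omega}$ to be exactly the set of initial segments of $x$, which excludes every $\mathbf{f}_{s'}$ and fixes $x$ among the points $\mathbf{f}_{(x',P')}$; the remaining condition $g(x,P)=1$ then selects the block $P$, since $\mathbf{f}_{(x,P')}(x,P)=1$ precisely when $P'=P$. This exhibits $\mathbf{f}_{(x,P)}$ as a $G_\delta$-point. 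First countability follows at once: isolated points trivially have a countable neighbourhood base, and in a compact Hausdorff space a $G_\delta$-point has countable character, so the decreasing finite intersections of the clopen sets above provide a countable base at $\mathbf{f}_{(x,P)}$.
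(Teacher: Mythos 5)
Your injectivity argument, your derivation that every point of $K_1(\mathfrak{P})$ has one of the two displayed forms (Bourgain--Fremlin--Talagrand plus Lemma~\ref{densecombs} plus Proposition~\ref{convergenceK1}), and your treatment of the isolated points and of the $G_\delta$ property all coincide with the paper's proof, which does exactly these things. The gap is the step you yourself flagged and then left open: you never prove that each $\mathbf{f}_{(x,P)}$ actually belongs to the closure $K_1(\mathfrak{P})$, that is, the inclusion $\supseteq$ in the displayed equality. As written, your proposal only shows that $\xi\mapsto\mathbf{f}_\xi$ is an injection whose range \emph{contains} $K_1(\mathfrak{P})$, not that it is a bijection onto it.

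Your caution about that step was exactly right; in fact the step cannot be completed in general. An $(i,j)$-sequence over $x$ inside $m^{<\omega}$ exists only when the direction $i$ occurs at infinitely many levels of the branch $x$, and membership of $\mathbf{f}_{(x,P)}$ in the closure forces such a pair to exist in $P$: if $\mathbf{f}_{s_n}\to\mathbf{f}_{(x,P)}$, then convergence on the coordinates $t\in m^{<\omega}$ gives $|x\wedge s_n|\to\infty$, convergence at the coordinate $(x,P)$ gives $inc(x,s_n)\in P$ eventually, and the first coordinate of $inc(x,s_n)$ is always the value of $x$ at level $|x\wedge s_n|$. Concretely, take $m=2$, $\mathfrak{P}=\{\{(1,0)\},\{(0,0),(0,1),(1,1)\}\}$, $P=\{(1,0)\}$ and $x=(0,0,0,\ldots)$: every $s\in 2^{<\omega}$ has $inc(x,s)\in\{(0,0),(0,1)\}$, so $\mathbf{f}_s(x,P)=0$ for all $s$, hence every function in the pointwise closure vanishes at $(x,P)$, while $\mathbf{f}_{(x,P)}(x,P)=1$; thus $\mathbf{f}_{(x,P)}\notin K_1(\mathfrak{P})$. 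So the literal statement overshoots: the correct description is that $K_1(\mathfrak{P})$ consists of the $\mathbf{f}_s$ together with those $\mathbf{f}_{(x,P)}$ for which some $(i,j)\in P$ has $i$ occurring infinitely often in $x$. You should know that the paper's own proof does not close this gap either---it proves injectivity and the inclusion $\subseteq$ and then simply declares the map surjective---so your honest flag points at an imprecision in the source rather than at an idea you failed to find; the imprecision is harmless later in the paper, where only the inclusion $\subseteq$ and the convergence description of Proposition~\ref{convergenceK1} are ever used.
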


\begin{proof}
It is easy to see that the function $\xi\mapsto \mathbf{f}_\xi$ is one-to-one. In fact, notice that $\mathbf{f}_{(x,P)} \neq \mathbf{f}_{(x,Q)}$ if $P\neq Q$ because $\mathbf{f}_{(x,P)}(x,Q) = 0$ but $\mathbf{f}_{(x,Q)}(x,Q) = 1$. In any other cases, if $\xi\neq\xi'$ then we can find $t\in m^{<\omega}$ where $\mathbf{f}_\xi(t) \neq \mathbf{f}_{\xi'}(t)$. Since $K_1(\mathfrak{P})$ is a Rosenthal compactum, by the Bourgain-Fremlin-Talagrand theorem, every point $z$ in it is the limit of a sequence in $\{\mathbf{f}_s : s\in m^{<\omega}\}$. By Lemma~\ref{densecombs}, if this sequence is infinite, then it  contains a further subsequence of the form $\{\mathbf{f}_{s_0},\mathbf{f}_{s_1},\ldots\}$, where $\{s_0,s_1,\ldots\}$ is an $(i,j)$-sequence over some $x\in m^\omega$, for some $(i,j)\in P \in \mathbb{P}$. By Proposition~\ref{convergenceK1}, the limit of such sequence is $z = \mathbf{f}_{(x,P)}.$ This proves that the range of the function $\xi\mapsto \mathbf{f}_\xi$  is the full $K_1(\mathbb{P})$. 

The function $\mathbf{f}_s$ is isolated because it is the unique function in $K_1(\mathfrak{P})$ that satisfies $\mathbf{f}_s(s) = 1$, and $\mathbf{f}_s(s^\frown i) = 0$ for all $i\in m$. On the other hand, $\mathbf{f}_{(x,P)}$ is a $G_\delta$-point, because it is the only function in $f\in K_1(\mathfrak{P})$ that satisfies $f(x|_k) = 1$ for all $k<\omega$, and moreover $f(x,P) = 1$.\end{proof}

Finally, we observe that if $T\subset m^{<\omega}$ is a first-move subtree, then the closure of $\{f_t : t\in T\}$ is naturally homeomorphic to the whole $K_1(\mathfrak{P})$.

\section{The spaces $K_\infty(\mathfrak{Q})$}\label{sectionKinfty}

Now, instead of a partition $\mathfrak{P}$ of $m\times m$ we will consider a family $\mathfrak{Q}$ of disjoint subsets of $m=\{0,1,\ldots,m-1\}$. We will construct a separable Rosenthal compact space $K_\infty(\mathfrak{Q})$ with a non-$G_\delta$-point. For this, we consider the Polish space $X_{\mathfrak{Q}} := m^{<\omega}\cup m^{\omega} \times \mathfrak{Q}$, endowed with the same topology that we gave to $X_\mathfrak{P}$ in the previous section.

Now, for every pair $s\in m^{<\omega}$ we consider a function $\mathbf{g}_s: X_{\mathfrak{Q}}\To \{0,1\}$ given by
$$\mathbf{g}_s(t) =  \begin{cases} 1 \text{ if } t= s \\ 0 \text{ otherwise } \end{cases} \text{ for } s,t\in m^{<\omega},$$
$$\mathbf{g}_{s}(y,Q) = \begin{cases}
 1 \text{ if } inc(y,s)=(i,i) \text{ for some } i\in Q\\
 0 \text{ otherwise }
   \end{cases} \text{ for } s\in m^{<\omega}, \ (y,Q)\in m^\omega\times \mathfrak{Q}$$
   
\begin{defn}
The compact space $K_\infty(\mathfrak{Q})$ is the pointwise closure of the functions $\{\mathbf{g}_s : s\in m^{<\omega}\}$ in $\mathbb{R}^{X_{\mathfrak{Q}}}$.
\end{defn}

In order to describe this closure, we consider the function $\mathbf{g}_\infty: X_{\mathfrak{Q}}\To \{0,1\}$ that is constant equal to 0, and, for every $(x,P)\in m^\omega \times \mathfrak{Q}$, the function $\mathbf{g}_{(x,P)}:X_\mathfrak{Q} \To \{0,1\}$ that takes the value 0 at all points except at $(x,P)$, where it takes value 1.

\begin{prop}\label{convergenceKinfty}
Fix $i,j\in m$, and $\{s_0,s_1,\ldots\}\subset m^{<\omega}$ an $(i,j)$-sequence over $x\in m^\omega$
\begin{enumerate}
\item If $i=j\in P\in \mathfrak{Q}$, then $\lim_k \mathbf{g}_{s_k} = \mathbf{g}_{(x,P)}.$
\item If either $i\neq j$ or $i=j\not\in \bigcup \mathfrak{Q}$, then $\lim_k \mathbf{g}_{s_k} = \mathbf{g}_{\infty}.$
\end{enumerate}
On the other hand, the only accumulation point of the set $\{\mathbf{g}_{(x,P)} : x\in m^\omega, P\in\mathfrak{Q}\}$ is $\mathbf{g}_\infty$.
\end{prop}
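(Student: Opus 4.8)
The plan is to prove parts (1) and (2) by a coordinatewise computation of the pointwise limit $\lim_k \mathbf{g}_{s_k}(\xi)$ at each $\xi \in X_{\mathfrak{Q}}$, since $K_\infty(\mathfrak{Q})$ sits inside $\{0,1\}^{X_{\mathfrak{Q}}}$ and convergence there is coordinatewise. The one structural input I use throughout is that an $(i,j)$-sequence over $x$ satisfies $|s_k \wedge x| \to \infty$, whence $|s_k| \to \infty$. This settles the coordinates $\xi = t \in m^{<\omega}$ at once: eventually $|s_k| > |t|$, so $s_k \neq t$ and $\mathbf{g}_{s_k}(t) = 0$, in agreement with both candidate limits $\mathbf{g}_{(x,P)}$ and $\mathbf{g}_\infty$, which vanish on $m^{<\omega}$. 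Hence the whole content lies in the coordinates $\xi = (y,Q) \in m^\omega \times \mathfrak{Q}$, where $\mathbf{g}_{s_k}(y,Q) = 1$ precisely when $inc(y,s_k) = (l,l)$ for some $l \in Q$.

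The decisive step for those coordinates is a splitting observation for $y \neq x$. As $x,y \in m^\omega$ are distinct branches, $N := |y \wedge x|$ is finite, and once $|s_k \wedge x| > N$ the string $s_k$ agrees with $x$ beyond the point where $y$ and $x$ separate; therefore $y \wedge s_k = y \wedge x$ and $inc(y,s_k) = inc(y,x)$, a fixed off-diagonal pair $(a,b)$ with $a \neq b$. Such a pair is never of the form $(l,l)$, so $\mathbf{g}_{s_k}(y,Q) = 0$ for all large $k$; this handles every coordinate over a branch $y \neq x$ simultaneously, consistently with both candidate limits. What remains are the coordinates $(x,Q)$, where $inc(x,s_k) = (i,j)$ is constant in $k$. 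For part (1), $i = j \in P$ gives $inc(x,s_k) = (i,i)$, so $\mathbf{g}_{s_k}(x,Q) = 1$ iff $i \in Q$; since $\mathfrak{Q}$ is a disjoint family and $i \in P$, this occurs iff $Q = P$, which is exactly $\mathbf{g}_{(x,P)}$. For part (2): if $i \neq j$ the pair is off-diagonal and the value is $0$, whereas if $i = j \notin \bigcup \mathfrak{Q}$ then $i$ lies in no member of $\mathfrak{Q}$ and the value is again $0$; either way the limit is $\mathbf{g}_\infty$.

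For the final statement I will argue directly from the definition of an accumulation point, avoiding sequential arguments (that $K_\infty(\mathfrak{Q})$ is Rosenthal, hence Fr\'{e}chet-Urysohn, is not yet available here). Writing $S = \{\mathbf{g}_{(x,P)} : (x,P) \in m^\omega \times \mathfrak{Q}\}$, any accumulation point $z$ of $S$ is $\{0,1\}$-valued because $K_\infty(\mathfrak{Q}) \subseteq \{0,1\}^{X_{\mathfrak{Q}}}$, and $z$ vanishes on $m^{<\omega}$ since every member of $S$ does. Fix $(y_0,Q_0)$ and set $U = \{f : f(y_0,Q_0) > 1/2\}$; the unique member of $S$ nonzero at $(y_0,Q_0)$ is $\mathbf{g}_{(y_0,Q_0)}$, so $U \cap S = \{\mathbf{g}_{(y_0,Q_0)}\}$. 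This shows $\mathbf{g}_{(y_0,Q_0)}$ is isolated in $S$ and thus not an accumulation point, so $z \neq \mathbf{g}_{(y_0,Q_0)}$. If $z(y_0,Q_0)$ were $1$, I would pick a coordinate $\xi_0$ at which $z$ and $\mathbf{g}_{(y_0,Q_0)}$ differ and shrink $U$ to $U \cap \{f : |f(\xi_0) - z(\xi_0)| < 1/2\}$, a neighborhood of $z$ whose only possible point of $S$ is $\mathbf{g}_{(y_0,Q_0)}$---excluded by its value at $\xi_0$---so the neighborhood misses $S$, contradicting that $z$ accumulates $S$. Hence $z(y_0,Q_0) = 0$ for every $(y_0,Q_0)$, that is $z = \mathbf{g}_\infty$; and $\mathbf{g}_\infty$ is genuinely an accumulation point, as any sequence of distinct $\mathbf{g}_{(x_k,P_k)}$ converges to it.

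The main obstacle is careful bookkeeping with the two-case definition of $inc$: the diagonal values $(l,l)$ arise exactly when one string is an initial segment of the other, so I must verify that $inc(y,s_k)$ is off-diagonal whenever $y \neq x$ and isolate the on-diagonal behavior along $x$ alone. The subtler point is in the accumulation argument: since $z$ and $\mathbf{g}_{(y_0,Q_0)}$ must agree at $(y_0,Q_0)$, no single coordinate separates them, so excluding $z(y_0,Q_0) = 1$ requires both the two-coordinate neighborhood above and the preliminary remark that each $\mathbf{g}_{(y_0,Q_0)}$ is isolated in $S$.
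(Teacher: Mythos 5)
Your proof is correct and takes essentially the same route as the paper's: a coordinatewise computation of the pointwise limit, splitting the coordinates into those in $m^{<\omega}$, those over branches $y\neq x$ (where the incidence stabilizes to the off-diagonal pair $inc(y,x)$), and those over $x$ itself, where the dichotomy $i=j\in P$ versus $i\neq j$ or $i=j\notin\bigcup\mathfrak{Q}$ decides between $\mathbf{g}_{(x,P)}$ and $\mathbf{g}_\infty$. The only difference is one of detail: the paper dismisses the final statement on accumulation points as evident, whereas you supply the (correct) direct argument that any accumulation point must vanish at every coordinate $(y_0,Q_0)$ and hence equal $\mathbf{g}_\infty$.
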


\begin{proof}
The last statement is evident. It is also clear that any of the two cases considered above, $\lim \mathbf{g}_{s_k}(t) = 0$ for $t\in m^{<\omega}$. On the other hand, $\mathbf{g}_{s_k}(y,Q) = 1$  only if $s_k^\frown \xi \leq y$ with $\xi\in Q$. Hence, $\mathbf{g}_{s_k}(y,Q)$ converges to 0 unless $y=x$ and $i=j\in Q=P$, in which case it converges to 1.
\end{proof}

\begin{prop}\label{KinftyisRosenthal}
$K_\infty(\mathfrak{Q})$ is a separable Rosenthal compactum.
\end{prop}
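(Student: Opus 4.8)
The plan is to mirror exactly the proof of Proposition~\ref{K1isRosenthal}, since the two constructions are parallel and the relevant machinery has already been set up. Concretely, I would show that $\{\mathbf{g}_s : s\in m^{<\omega}\}$ is a countable family of continuous functions on the Polish space $X_{\mathfrak{Q}}$ with the property that every infinite subsequence admits a further subsequence converging pointwise in $\mathbb{R}^{X_{\mathfrak{Q}}}$. By the Bourgain--Fremlin--Talagrand criterion~\cite{BFT}, the pointwise closure of such a family is then a Rosenthal compactum, and separability is automatic because the countable set $\{\mathbf{g}_s\}$ is dense in it by construction.

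First I would verify continuity of each $\mathbf{g}_s$ on $X_{\mathfrak{Q}}$. On the open discrete piece $m^{<\omega}$ continuity is trivial. On the open piece $m^\omega\times\mathfrak{Q}$, the value $\mathbf{g}_s(y,Q)$ depends only on whether $inc(y,s)=(i,i)$ for some $i\in Q$; since this condition is determined by the finitely many initial coordinates of $y$ (those up to $|s|$) together with the discrete coordinate $Q$, the preimages of $\{0\}$ and $\{1\}$ are clopen, so $\mathbf{g}_s$ is continuous exactly as the $\mathbf{f}_s$ were.

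Next I would extract the convergent subsequences. Given any infinite $\{s_0,s_1,\ldots\}\subset m^{<\omega}$, Lemma~\ref{densecombs} yields a further subsequence that is an $(i,j)$-comb, hence an $(i,j)$-sequence over some $x\in m^\omega$ for some pair $(i,j)\in m\times m$. Proposition~\ref{convergenceKinfty} then tells us precisely what the limit is: either $\mathbf{g}_{(x,P)}$ when $i=j\in P\in\mathfrak{Q}$, or $\mathbf{g}_\infty$ otherwise. In every case the subsequence converges pointwise in $\mathbb{R}^{X_{\mathfrak{Q}}}$, which is exactly the hypothesis needed to invoke \cite{BFT}.

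I do not anticipate a genuine obstacle here, as every ingredient has already been established; the only point requiring a little care is to note that the property ``every sequence has a pointwise convergent subsequence'' is what the Bourgain--Fremlin--Talagrand theorem requires, and that this is delivered uniformly by Lemma~\ref{densecombs} and Proposition~\ref{convergenceKinfty} regardless of which comb type arises. The proof is therefore essentially a one-line citation paralleling Proposition~\ref{K1isRosenthal}, and the bulk of the real content lives in the already-proved convergence proposition.
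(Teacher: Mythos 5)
Your proposal is correct and follows exactly the paper's approach: the paper's own proof is literally ``Similar to Proposition~\ref{K1isRosenthal}'', and that earlier proof is precisely your argument --- continuity of the dense family of functions on the Polish space, extraction of convergent subsequences via Lemma~\ref{densecombs} together with the convergence proposition, and then the Bourgain--Fremlin--Talagrand criterion. You have simply spelled out the details (with Proposition~\ref{convergenceKinfty} in place of Proposition~\ref{convergenceK1}) that the paper leaves implicit.
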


\begin{proof}
Similar to Proposition~\ref{K1isRosenthal}
\end{proof}

\begin{prop}\label{Kinftydescription}
The function $X_{\mathfrak{Q}} \cup\{\infty\} \To K_\infty(\mathfrak{Q})$ given by $\xi\mapsto \mathbf{g}_\xi$ is a bijection. Thus,$$K_\infty(\mathfrak{P}) = \{\mathbf{g}_s : s\in m^{<\omega}\} \cup \{\mathbf{g}_{(x,P)} : (x,P)\in m^\omega\times \mathfrak{Q}\}\cup \{\mathbf{g}_\infty\}.$$
This is a scattered space of height 3, whose Cantor-Bendixson derivates are
\begin{eqnarray*}
K_\infty(\mathfrak{P})' &=&  \{\mathbf{g}_{(x,P)} : (x,P)\in m^\omega\times \mathfrak{Q}\}\cup \{\mathbf{g}_\infty\}\\
K_\infty(\mathfrak{P})'' &=&  \{\mathbf{g}_\infty\}
\end{eqnarray*}
Thus, the points $\mathbf{g}_s$ are isolated in $K_\infty(\mathfrak{Q})$, the points $\mathbf{g}_{(x.P)}$ are $G_\delta$-points, but if $\mathfrak{Q} \neq\emptyset$, then $\mathbf{g}_\infty$ is not a $G_\delta$-point of $K_\infty(\mathfrak{Q})$.
\end{prop}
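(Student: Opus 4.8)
The plan is to follow the template of Proposition~\ref{K1description} for the bijection and description, then read off the Cantor--Bendixson derivatives and the character of each point directly from the defining formulas, isolating the non-$G_\delta$-ness of $\mathbf{g}_\infty$ as the only substantial step.

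First I would check that $\xi\mapsto\mathbf{g}_\xi$ is injective. The function $\mathbf{g}_s$ takes the value $1$ only at $s\in m^{<\omega}$, the function $\mathbf{g}_{(x,P)}$ only at $(x,P)\in m^\omega\times\mathfrak{Q}$, and $\mathbf{g}_\infty$ nowhere; these supports distinguish any two functions of the list. Surjectivity is exactly as in Proposition~\ref{K1description}: since $K_\infty(\mathfrak{Q})$ is Rosenthal (Proposition~\ref{KinftyisRosenthal}), every point is a limit of a sequence of $\mathbf{g}_s$'s by the Bourgain--Fremlin--Talagrand theorem; an infinite such sequence contains, by Lemma~\ref{densecombs}, an $(i,j)$-comb and hence an $(i,j)$-sequence over some $x\in m^\omega$, whose limit is $\mathbf{g}_{(x,P)}$ or $\mathbf{g}_\infty$ by Proposition~\ref{convergenceKinfty}.

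Next, the scattered structure is obtained from the clopen sets $\{f:f(\xi)=1\}$. The identity $\{f:f(s)=1\}=\{\mathbf{g}_s\}$ shows each $\mathbf{g}_s$ is isolated, so $K_\infty(\mathfrak{Q})'$ is the stated set. Within it, the identity $\{f\in K_\infty(\mathfrak{Q})':f(x,P)=1\}=\{\mathbf{g}_{(x,P)}\}$ shows each $\mathbf{g}_{(x,P)}$ is isolated in the first derivative, whereas $\mathbf{g}_\infty$ is not: any sequence of \emph{distinct} points $\mathbf{g}_{(x_k,P_k)}$ converges to $\mathbf{g}_\infty$, since each is supported at the single coordinate $(x_k,P_k)$ and a finite set of coordinates contains only finitely many of these. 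Hence $K_\infty(\mathfrak{Q})''=\{\mathbf{g}_\infty\}$ and $K_\infty(\mathfrak{Q})'''=\emptyset$, giving height $3$. The same bookkeeping shows $\mathbf{g}_{(x,P)}$ is a $G_\delta$-point, via the countable intersection of clopen sets $\{f:f(x,P)=1\}\cap\bigcap_{t\in m^{<\omega}}\{f:f(t)=0\}=\{\mathbf{g}_{(x,P)}\}$ (the second factor kills every $\mathbf{g}_s$, and the first then selects $\mathbf{g}_{(x,P)}$ among the remaining points).

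The main obstacle is proving that $\mathbf{g}_\infty$ is \emph{not} $G_\delta$ when $\mathfrak{Q}\neq\emptyset$. Because every $f\in K_\infty(\mathfrak{Q})$ is $\{0,1\}$-valued and $\mathbf{g}_\infty$ is the zero function, every neighborhood of $\mathbf{g}_\infty$ contains a basic one of the form $\{f:f(\xi)=0\text{ for all }\xi\in F\}$ with $F\subset X_{\mathfrak{Q}}$ finite. If $\mathbf{g}_\infty$ were $G_\delta$, shrinking the members of the representing sequence to such basic neighborhoods and collecting their coordinates would yield a \emph{countable} set $F\subset X_{\mathfrak{Q}}$ with $\{f:f(\xi)=0\text{ for all }\xi\in F\}=\{\mathbf{g}_\infty\}$. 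But $m^\omega\times\mathfrak{Q}$ is uncountable (as $m\geq 2$ and $\mathfrak{Q}\neq\emptyset$), so one may choose $(x,P)\notin F$; then $\mathbf{g}_{(x,P)}$ vanishes on all of $F$ yet differs from $\mathbf{g}_\infty$, a contradiction. I expect this reduction---pinning down the basic neighborhoods of the zero function and exploiting that each $\mathbf{g}_{(x,P)}$ is supported at a single, freely chosen coordinate---to be the crux, with everything else a direct verification from the formulas.
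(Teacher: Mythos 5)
Your proposal is correct and follows essentially the same route as the paper: injectivity from the supports, surjectivity via Bourgain--Fremlin--Talagrand together with Lemma~\ref{densecombs} and Proposition~\ref{convergenceKinfty}, and the Cantor--Bendixson structure read off from the clopen sets $\{f : f(\xi)=1\}$. The only difference is that you spell out the $G_\delta$ and non-$G_\delta$ claims (the countable intersection isolating $\mathbf{g}_{(x,P)}$, and the countable-coordinates-versus-uncountably-many-points argument for $\mathbf{g}_\infty$), which the paper's proof treats as evident and leaves implicit; both verifications are correct.
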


\begin{proof}
It is clear that the assignment $\xi\mapsto \mathbf{g}_\xi$ is one-to-one. The fact that this is surjective is deduced from Proposition~\ref{convergenceKinfty} using the same argument as in the proof of Proposition~\ref{K1description}. Each function $\mathbf{g}_s$ is isolated in $K_\infty(\mathfrak{Q})$ as it is the only function such that $\mathbf{g}_s(s) = 1$. The rest of points are not isolated by Proposition~\ref{convergenceKinfty}. On the other hand, it is clear that each $\mathbf{g}_{(x,P)}$ is isolated in  $K_\infty(\mathfrak{Q})'$.
\end{proof}

\section{Minimal spaces}\label{sectionmin}

If we have a finite set $N$ and a surjective function $f:m\times m\To N$, we can associate to it a partition $\mathfrak{P}_f = \{f^{-1}(i) : i\in N\}$ of $m\times m$ into $|N|$ pieces. The following definition is taken from~\cite{stronggaps},

\begin{defn}
For natural numbers $m_0$ and $m_1$, we say that $\varepsilon:m_0^2\To m_1^2$ is a reduction map if there exists $k<\omega$ a one-to-one function $e:m_0 \To m_1^k$ and an element $x\in m_1^{<\omega}$ such that $|x|<k$ and for all $u,v\in m_0$
\begin{enumerate}
\item $\varepsilon(u,v) = inc(e(u),e(v))$ if $u\neq v$,
\item $\varepsilon(u,u) = inc(e(u),x)$ .\\
\end{enumerate}
\end{defn}

\begin{defn}
For two surjective functions $f:m_0^2\To N_0$ and $g:m_1^2\To N_1$, we write $f\prec g$ if there exists a reduction map $\varepsilon:m_0^2\To m_1^2$ such that $f = g\circ \varepsilon$.
\end{defn}

\begin{lem}\label{fprecg}
If $f\prec g$, then $K_1(\mathfrak{P}_f)$ is homeomorphic to a closed subspace of $K_1(\mathfrak{P}_g)$.
\end{lem}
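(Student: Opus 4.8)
The plan is to build the embedding directly from the reduction-map data. Write $e:m_0\To m_1^k$ and $z\in m_1^{<\omega}$ with $|z|<k$ for the witnesses of $\varepsilon$ (I rename the paper's ``$x$'' in the reduction map to $z$). Let $E:m_0^{\leq\omega}\To m_1^{\leq\omega}$ be the blockwise extension of $e$, i.e. $E(s_0,s_1,\ldots)=e(s_0)^\frown e(s_1)^\frown\cdots$, and on tree nodes set $\Psi(s)=E(s)^\frown z$ for $s\in m_0^{<\omega}$. Since $f=g\circ\varepsilon$, every $P=f^{-1}(\ell)\in\mathfrak{P}_f$ satisfies $P=\varepsilon^{-1}(g^{-1}(\ell))$, so I define $\iota:\mathfrak{P}_f\To\mathfrak{P}_g$ by $\iota(f^{-1}(\ell))=g^{-1}(\ell)$; it is well defined and injective, and $\varepsilon^{-1}(\iota(P))=P$, i.e. $\varepsilon(a)\in\iota(P)$ iff $a\in P$. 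The candidate embedding $\Phi:K_1(\mathfrak{P}_f)\To K_1(\mathfrak{P}_g)$ sends $\mathbf{f}_s\mapsto\mathbf{f}_{\Psi(s)}$ and $\mathbf{f}_{(x,P)}\mapsto\mathbf{f}_{(E(x),\iota(P))}$.

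The heart of the argument is the incidence identity $inc(E(x),\Psi(s))=\varepsilon(inc(x,s))$ for every $x\in m_0^\omega$ and $s\in m_0^{<\omega}$, together with the fact that $|\Psi(s)\wedge E(x)|\to\infty$ whenever $|s\wedge x|\to\infty$ (clear, since $E$ multiplies meet-lengths by $k$). For the identity I distinguish two cases. If $inc(x,s)=(i,j)$ with $i\neq j$, then $x$ and $s$ split after $r=x\wedge s$ into the blocks $e(i)$ and $e(j)$; as $e$ is injective these blocks differ, so $inc(E(x),\Psi(s))=inc(e(i),e(j))=\varepsilon(i,j)$ and the trailing $z$ is irrelevant. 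If $inc(x,s)=(i,i)$, then $s\leq x$ with $s^\frown i\leq x$, hence $E(s)\leq E(x)$ with $E(x)$ continuing by $e(i)$ while $\Psi(s)$ continues by $z$; since $|z|<k=|e(i)|$ the comparison takes place inside this single block and yields $inc(E(x),\Psi(s))=inc(e(i),z)=\varepsilon(i,i)$. \emph{This diagonal case is the main obstacle}: the naive map $s\mapsto E(s)$ records the wrong self-incidence $(e(i)_0,e(i)_0)$, and appending the reference point $z$ is exactly what forces clause (2) of the reduction map to hold.

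Next I establish continuity. First I record the convergence criterion read off from the definition of $\mathbf{f}_{(x,P)}$: for $a_n\in m_0^{<\omega}$ one has $\mathbf{f}_{a_n}\to\mathbf{f}_{(x,P)}$ iff $|a_n\wedge x|\to\infty$ and $inc(x,a_n)\in P$ eventually (the first condition controls the coordinates in $m_0^{<\omega}$ and those $(y,Q)$ with $y\neq x$, the second controls the coordinates $(x,Q)$). Combining this with the incidence identity and the equivalence $\varepsilon(a)\in\iota(P)\iff a\in P$, any sequence with $\mathbf{f}_{a_n}\to\mathbf{f}_{(x,P)}$ satisfies $|\Psi(a_n)\wedge E(x)|\to\infty$ and $inc(E(x),\Psi(a_n))=\varepsilon(inc(x,a_n))\in\iota(P)$ eventually, hence $\mathbf{f}_{\Psi(a_n)}\to\mathbf{f}_{(E(x),\iota(P))}$. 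Since $K_1(\mathfrak{P}_f)$ and $K_1(\mathfrak{P}_g)$ are separable Rosenthal compacta, they are bisequential by Pol's theorem, so I may apply Lemma~\ref{continuousondense} to $\phi(\mathbf{f}_s)=\mathbf{f}_{\Psi(s)}$ on the dense set of isolated points. Alternative (2) of that lemma is ruled out precisely by the previous computation: two sequences of isolated points with a common limit (necessarily some $\mathbf{f}_{(x,P)}$, the isolated-limit case being trivial) have images converging to the single point $\mathbf{f}_{(E(x),\iota(P))}$. Thus $\phi$ extends to a continuous $\Phi$, whose value at $\mathbf{f}_{(x,P)}$ is forced to be $\mathbf{f}_{(E(x),\iota(P))}$.

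Finally, injectivity and closedness. The maps $\Psi$, $E$ (on $m_0^\omega$) and $\iota$ are all injective; $\Phi$ sends isolated points to isolated points and $G_\delta$ limit points to $G_\delta$ limit points (Proposition~\ref{K1description}), and these two kinds of points are disjoint, so $\Phi$ is injective. A continuous injection from the compact space $K_1(\mathfrak{P}_f)$ into the Hausdorff space $K_1(\mathfrak{P}_g)$ is a homeomorphism onto its image, and that image, being a continuous image of a compact set, is closed. This produces the desired closed copy of $K_1(\mathfrak{P}_f)$ inside $K_1(\mathfrak{P}_g)$.
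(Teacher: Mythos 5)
Your proof is correct and follows essentially the same route as the paper: the same blockwise maps $\Psi(s)=e(s_0)^\frown\cdots^\frown e(s_k)^\frown z$ and $E$ on branches, the same use of Pol's theorem plus Lemma~\ref{continuousondense} to extend the map on the dense set of isolated points, and the same identification $\mathbf{f}_{(x,P)}\mapsto \mathbf{f}_{(E(x),\iota(P))}$ yielding injectivity and hence a closed homeomorphic copy. The only difference is one of detail, not of method: you verify explicitly the incidence identity $inc(E(x),\Psi(s))=\varepsilon(inc(x,s))$ (including the diagonal case that motivates appending $z$) and work with a direct convergence criterion, where the paper invokes Lemma~\ref{densecombs} and Proposition~\ref{convergenceK1} and calls the incidence computation ``easy to see.''
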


\begin{proof}
We have a reduction map $\varepsilon:m_0^2\To m_1^2$ such that $f = g\circ \varepsilon$. Let $e$ and $x$ be as in the definition of reduction map. As in the proof of \cite[Lemma~11]{stronggaps}, we consider $\phi:m_0^{<\omega}\To m_1^{<\omega}$ defined as
$$\phi(u_0,\ldots,u_k) = e(u_0)^\frown\cdots^\frown e(u_k)^\frown x.$$
and $\psi:m_0^\omega \To m_1^\omega$ defined as
$$\psi(u_0,\ldots,u_k,\ldots) = e(u_0)^\frown\cdots^\frown e(u_k)^\frown \cdots.$$

It is easy to see that $\phi$ and $\psi$ are both injective and that if $\{s_n\}$ is an $(i,j)$-sequence over $x$, then $\{\phi(s_n)\}$ is an $\varepsilon(i,j)$-sequence over $\psi(x)$. The map $\phi$ induces a function $\hat{\phi}:\{\mathbf{f}_t : t\in m_0^{<\omega}\}\To \{\mathbf{f}_t : t\in m_1^{<\omega}\}$ between the dense sets of $K_1(\mathfrak{P}_f)$ and $K_1(\mathfrak{P}_g)$, given by $\hat{\phi}(\mathbf{f}_t) = \mathbf{f}_{\phi(t)}$. We claim that this function extends to a continuous function $\tilde{\phi}:K_1(\mathfrak{P}_f) \To K_1(\mathfrak{P}_g)$. If not, by Lemma~\ref{continuousondense}, we would have two sequences $\{\mathbf{f}_{s_n}\}$ and $\{\mathbf{f}_{t_n}\}$ that converge to the same point in $K_1(\mathfrak{P}_f)$ but $\{\mathbf{f}_{\phi(s_n)}\}$ and $\{\mathbf{f}_{\phi(t_n)}\}$ converge to different points in $K_1(\mathfrak{P}_g)$. By Lemma~\ref{densecombs}, we can suppose that $\{s_n\}$ is an $(i,j)$-sequence over some $y$, and $\{t_n\}$ is a $(u,v)$-sequence over some $z$. Since $\{\mathbf{f}_{s_n}\}$ and $\{\mathbf{f}_{t_n}\}$ converge to the same point, by Proposition~\ref{convergenceK1}, $y=z$ and $(i,j)$ and $(u,v)$ belong to the same piece of the partition $\mathfrak{P}_f$, that is $f(i,j) = f(u,v)$. But then, $\{\mathbf{f}_{\phi(s_n)}\}$ is an $\varepsilon(i,j)$-sequence over $\psi(y)$  and $\{\mathbf{f}_{\phi(t_n)}\}$ is an $\varepsilon(u,v)$-sequence over $\psi(y)=\psi(z)$. Since $$g(\varepsilon(i,j)) = f(i,j) = f(u,v) = g(\varepsilon(u,v)),$$ $\varepsilon(i,j)$ and $\varepsilon(u,v)$ belong to the same piece of the partition $\mathfrak{P}_g$. Therefore, $\{\mathbf{f}_{\phi(s_n)}\}$ and $\{\mathbf{f}_{\phi(t_n)}\}$ converge to the same point $K_1(\mathfrak{P}_g)$. This finishes the proof that we hava a continuous extension $\tilde{\phi}:K_1(\mathfrak{P}_f) \To K_1(\mathfrak{P}_g)$. Moreover, the above argument also proves that $\tilde{\phi}(\mathbf{f}_{(y,f^{-1}(i))}) = \mathbf{f}_{(\psi(y),g^{-1}(i))}$, so $\tilde{\phi}$ is clearly injective.
\end{proof}

The converse of the previous lemma is true, modulo permutations:

\begin{lem}\label{homeomorphiccopy}
Let $g^0:m_0^2\To N_0$ and $g^1: m_1^2\To N_1$ be two surjective functions. If $K_1(\mathfrak{P}_{g^0})$ is homeomorphic to a subspace of $K_1(\mathfrak{P}_{g^1})$, then there exists an injective $\sigma:N_0\To N_1$ such that\footnote{$\sigma \circ g^0$ is not surjective, but we view it as a surjection onto its range.} $\sigma \circ g^0 \prec g^1$. 
\end{lem}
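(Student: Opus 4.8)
The plan is to start from a homeomorphic embedding $h\colon K_1(\mathfrak{P}_{g^0})\to K_1(\mathfrak{P}_{g^1})$ with image $L=h(K_1(\mathfrak{P}_{g^0}))$, and to recover the data $e$ and $x$ of a reduction map by analysing how $h$ transports the dense set of isolated points $\{\mathbf f_s:s\in m_0^{<\omega}\}$. Since these points are isolated and dense, their images are isolated and dense in $L$. My first task is to show that, after passing to a first-move subtree $T\subseteq m_0^{<\omega}$ (harmless, because by the remark closing Section~\ref{sectionK1} the closure of $\{\mathbf f_t:t\in T\}$ is again homeomorphic to the whole $K_1(\mathfrak{P}_{g^0})$), we may assume $h(\mathbf f_s)=\mathbf f_{\theta(s)}$ for an injection $\theta\colon T\to m_1^{<\omega}$ into the \emph{isolated} points of $K_1(\mathfrak{P}_{g^1})$. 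To arrange this I would $2$-colour the nodes of $m_0^{<\omega}$ according to whether $h(\mathbf f_s)$ is isolated-type $\mathbf f_t$ or $G_\delta$-type $\mathbf f_{(y,Q)}$ in the ambient space, and apply Theorem~\ref{strongRamsey} with $A_0$ a singleton to obtain a monochromatic subtree.

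The next step analyses combs. Fix an $(i,j)$-comb $\{s_k\}\subseteq T$; it is an $(i,j)$-sequence over some $x\in m_0^\omega$, so by Proposition~\ref{convergenceK1}(1) $\mathbf f_{s_k}\to \mathbf f_{(x,P)}$ with $P=(g^0)^{-1}(g^0(i,j))$. Applying $h$ and then Lemma~\ref{densecombs} to the image sequence $\{\theta(s_k)\}\subseteq m_1^{<\omega}$, a subcomb of it is an $(i',j')$-sequence over some $w\in m_1^\omega$, whence $h(\mathbf f_{(x,P)})=\mathbf f_{(w,R)}$ with $R=(g^1)^{-1}(g^1(i',j'))$, again by Proposition~\ref{convergenceK1}(1). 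Thus every comb-type $(i,j)$ produces a target incidence type; to make this assignment uniform I would apply Theorem~\ref{strongRamsey} once more, colouring the $(i,j)$-combs by the resulting value $g^1(i',j')\in N_1$ (a finite palette), for each of the finitely many equivalence classes in turn, to reach a single first-move subtree on which the target value depends only on $(i,j)$. This defines $\varepsilon\colon m_0^2\to m_1^2$, $(i,j)\mapsto(i',j')$, and a candidate $\sigma\colon N_0\to N_1$ by $\sigma(g^0(i,j))=g^1(\varepsilon(i,j))$, i.e. $g^1\circ\varepsilon=\sigma\circ g^0$.

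It then remains to verify that $\sigma$ is well defined and injective and that $\varepsilon$ is genuinely a reduction map. Well-definedness of $\sigma$ says that $g^0(i,j)=g^0(i',j')$ forces $g^1(\varepsilon(i,j))=g^1(\varepsilon(i',j'))$, which I would read off from the fact that combs of both types over a common $x$ converge to the same point $\mathbf f_{(x,P)}$, so their images converge to the same $\mathbf f_{(w,R)}$ and hence fall in the same piece $R$. Injectivity of $\sigma$ is the dual statement: if $g^0(i,j)\neq g^0(i',j')$, the two combs over a common $x$ converge to distinct points $\mathbf f_{(x,P)}\neq\mathbf f_{(x,P')}$, whose images are distinct since $h$ is injective; one must rule out that they differ only in the branch coordinate, which then forces the target pieces to be distinct. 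Finally, to present $\varepsilon$ in the normal form $\varepsilon(u,v)=inc(e(u),e(v))$, $\varepsilon(u,u)=inc(e(u),x)$, I would extract from the regular behaviour of $\theta$ on the self-similar subtree $T$ the block substitution $e\colon m_0\to m_1^k$ sending each letter $u$ to the word describing how $\theta$ routes the corresponding edge, together with the common tail $x$ governing the diagonal incidences $\varepsilon(u,u)$; this is exactly the map $\phi$ of Lemma~\ref{fprecg} read backwards.

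The step I expect to be the main obstacle is the very first one: forcing $h$ to send isolated points to isolated points. Topology alone does not forbid an isolated point from landing on a $G_\delta$ point that happens to be isolated in $L$ (a countable relatively discrete set can perfectly well accumulate on a perfect set), so this must use the rigidity of convergence in $K_1(\mathfrak P)$. Concretely, in the bad monochromatic case one must show that an infinite family of $G_\delta$-type points, each isolated in $L$, cannot converge to the image of a $G_\delta$ point, exploiting Proposition~\ref{convergenceK1}(2), bisequentiality via Lemma~\ref{biseqblocks}, and ultimately the non-separation property of Lemma~\ref{notcountablyseparated}. A closely related difficulty is the branch-coherence needed for the injectivity of $\sigma$, since the branch map $\mathbf f_{(x,P)}\mapsto x$ is only finite-to-one and the first coordinate of $h(\mathbf f_{(x,P)})$ is not a priori independent of $P$.
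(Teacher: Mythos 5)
Your Case-1 skeleton (Ramsey on nodes to split isolated-type from $G_\delta$-type images, a second application of Theorem~\ref{strongRamsey} to uniformize the comb correspondence, and the definition of $\sigma$ from the limits of image combs) is essentially the paper's argument for that case. The genuine gap is in your treatment of what you yourself single out as the main obstacle. You propose to \emph{refute} the bad monochromatic alternative, i.e.\ to prove that an embedding cannot send the isolated points $\{\mathbf{f}^0_t\}$ into the $G_\delta$-type points $\{\mathbf{f}^1_{(y,Q)}\}$. That statement is false, so no combination of Proposition~\ref{convergenceK1}, Lemma~\ref{biseqblocks} and Lemma~\ref{notcountablyseparated} can establish it. Concretely, let $\mathfrak{P}=\{\{(0,0),(1,1),(1,0)\},\{(0,1)\}\}$. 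By Lemma~\ref{splitK1}, $K_1(\mathfrak{P})$ embeds into $S(2^\omega)$, and by the first part of the proof of Lemma~\ref{containsthesplit}, $S(2^\omega)$ is homeomorphic to the set $\{\mathbf{f}_{(z,P)} : z\in 2^\omega,\ P\in\mathfrak{P}\}$ of $G_\delta$-type points of $K_1(\mathfrak{P})$ itself. Composing the two gives a self-embedding of $K_1(\mathfrak{P})$ under which \emph{every} point, in particular every isolated point, lands on a $G_\delta$-type point; the images of the isolated points are isolated in the image $L$ and do converge to images of $G_\delta$ points. So the bad case is consistent and must be handled, not excluded.

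What the paper actually does in that case, writing $\phi(\mathbf{f}^0_t)=\mathbf{f}^1_{(x_t,P_t)}$, is a further dichotomy on the branches $x_t$. If for every $t$ and $i$ there is $r\geq t^\frown i$ with $\{x_t\wedge x_s : s\geq r\}$ finite, one passes to a first-move subtree $T$ on which the meets $\{x_t\wedge x_s : s\in T\}$ are finite for each $t\in T$, and then \emph{truncates}: replacing $\mathbf{f}^1_{(x_t,P_t)}$ by the isolated point $\mathbf{f}^1_{x_t|_{k_t}}$, with $k_t$ beyond all such meets, yields (via Proposition~\ref{convergenceK1} and Lemma~\ref{continuousondense}) a new embedding with the same convergence behaviour that lands in isolated points, reducing to Case 1. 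Otherwise the meets accumulate along a single branch $x_t$; one extracts a subtree on which $s\mapsto x_t\wedge x_s$ is injective, and then every cluster point of the image lies in the finite set $\{\mathbf{f}^1_{(x_t,P)} : P\in\mathfrak{P}_{g^1}\}$, which contradicts injectivity of $\phi$ because the domain subtree has uncountably many cluster points. Your proposal contains neither the truncation idea nor this dichotomy, and the contradiction you aim for only exists in the second horn. Two further, smaller, points: producing an actual reduction map in the normal form $(e,x)$ from the comb correspondence is a nontrivial combinatorial step --- the paper gets it by quoting Lemma~11 of \cite{stronggaps} (from $\Gamma_{\sigma\circ g^0}\leq\Gamma_{g^1}$ deduce $\sigma\circ g^0\prec g^1$), and ``Lemma~\ref{fprecg} read backwards'' is not an argument, the more so since your Ramsey colouring uniformizes only the value $g^1(\varepsilon(i,j))$ and not the pair $\varepsilon(i,j)$ itself; and the branch-coherence concern you raise for the injectivity of $\sigma$ is indeed a legitimate point that a complete write-up must address (for instance by uniformizing double combs as in the proof of Lemma~\ref{findstandardK}), but it is a refinement inside Case 1, not a substitute for the missing analysis of the $G_\delta$ case.
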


\begin{proof}
Let $\phi:K_1(\mathfrak{P}_{g^0}) \To K_1(\mathfrak{P}_{g^1})$ be a homeomorphic embedding. Let us call $\{\mathbf{f}^0_t : t\in m_0^{<\omega}\}$ and $\{\mathbf{f}^1_t : t\in m_1^{<\omega}\}$ the respective dense sets, and $\{\mathbf{f}^1_{(x,P)} : x\in m_1^{\omega},P\in\mathfrak{P}_{g^1}\}$ the other points of $K_1(\mathfrak{P}_{g^1})$.

Lemma~\ref{strongRamsey} applied to a singleton $A_0$ means that whenever we divide $m^{<\omega}$ into two pieces there exists a first-move subtree contained in one of the pieces. Thus, by passing to a first-move subtree, we can suppose that  
\begin{eqnarray*}\text{(case 1)} & & \text{ either } \{\phi(\mathbf{f}^0_t) : t\in m_0^{<\omega}\}\subset \{\mathbf{f}^1_t : t\in m_1^{<\omega}\}\\
\text{(case 2)} & & \text { or } \{\phi(\mathbf{f}^0_t) : t\in m_0^{<\omega}\}\subset \{\mathbf{f}^1_{(x,P)} : x\in m_1^{\omega},P\in\mathfrak{P}_{g^1}\}.
\end{eqnarray*}
Let us consider first case 1. If we fix $(i,j)$, then the $(i,j)$-combs $A$ of $m_0^{<\omega}$ can be partitioned into $m_1^2+1$ many Borel pieces, depending on whether $\{\phi(\mathbf{f}_t^0 : t\in A\}$ is a $(u,v)$-comb for some $u,v\in m_1$, or it is not a comb at all. By Theorem~\ref{strongRamsey}, after passing to a first-move subtree, we can suppose that all $(i,j)$-combs lie in the same piece of this partition. The piece cannot be that of non-combs because of Lemma~\ref{densecombs}. After repeated application of this argument, we can suppose that for every $(i,j)\in m_0^2$ there exists $\beta(i,j)$ such that if $\{\mathbf{f}^0_t : t\in A\}$ is an $(i,j)$-comb, then $\{\phi(\mathbf{f}^0_t) : t\in A\}$ is a $\beta(i,j)$-comb. The fact that $\phi$ is a homeomorphic embedding, combined with Proposition~\ref{convergenceK1}, implies that $g^0(i0,j0) = g^0(i1,j1)$ if and only if $g^1\beta(i0,j0)=g^1\beta(i1,j1)$. Therefore, we have a well defined injective function $\sigma:N_0\To N_1$ such that $\sigma(g^0(i,j)) = g^1(\beta(i,j))$ for every $(i,j)$. In the language of \cite{stronggaps} that would translate into the fact that $\Gamma_{\sigma\circ g^0}\leq \Gamma_{g^1}$, so by \cite[Lema 11]{stronggaps}, we obtain that $\sigma \circ g^0 \prec g^1$.
The second case,
$$\{\phi(\mathbf{f}^0_t) : t\in 2^{<\omega}\}\subset \{\mathbf{f}^1_{(x,P)} : x\in m_1^{\omega},P\in\mathfrak{P}_{g^1}\},$$
will be reduced to the first case, because we will find a new $\phi'$ that will satisfy the first option. Let $\phi(\mathbf{f}_t^0) = \mathbf{f}^1_{(x_t,P_t)}$. We distinguish two further cases now:

\begin{enumerate}

\item For every $t\in m_0^{<\omega}$ and for every $i\in m_0$ there exists $r\geq t^\frown i$ such that $\{x_t \wedge x_s : s\geq r\}$ is finite. Under this assumption, we can construct inductively a first-move subtree $T$ such that $\{x_t\wedge x_s : s\in T\}$ is finite for every $t\in T$. Then, we can define $\phi'(\mathbf{f}^0_t) = \mathbf{f}^1_{x_t|{k_t}}$ where $k_t<\omega$ is larger than $|x_t\wedge x_s|$ for all $s\in T\setminus\{t\}$. Using Proposition~\ref{convergenceK1} and Lemma~\ref{continuousondense}, we see that the map $\phi(t)\mapsto \phi'(t)$ for $t\in T$ induces a homeomorphism between closures, so $t\mapsto \phi'(t)$ is a new homeomorphic embedding that falls into the previous case, that is: $$\{\phi'(\mathbf{f}^0_t) : t\in T\}\subset \{\mathbf{f}^1_t : t\in m_1^{<\omega}\}.$$

\item There exist $t\in m_0^{<\omega}$ and $i\in m_0$ such that for all $r\geq t^\frown i$, the set $\{x_t\wedge x_s : s\geq r\}$ is infinite. In this case, it is possible to constuct inductively a first-move subtree $T\subset m_0^{<\omega}$ rooted at $t^\frown i$ such that the map $s\mapsto x_t\wedge x_s$ is injective on $T$. Then, using Proposition~\ref{convergenceK1} in combination with Lemma~\ref{densecombs}, we see that the only possible limits of subsequences of $\{\phi'(\mathbf{f}^0_t) : t\in T\}$ lie in the finite set $\{\mathbf{f}^1_{(x_t,P)} : P\in \mathfrak{P}_{g^1}\}$. This is impossible because $\phi$ is a homeomorphic embedding and $\{\mathbf{f}^0_t : t\in T\}$ has uncountably many cluster points.
\end{enumerate}

\end{proof}

\begin{lem}\label{nsubset}
For every surjective $g:m_1^2\To N_1$ and for every $n_0<|N_1|$, there exist $N_0\subset N_1$ of cardinality $n_0$ and a surjective $f:m_0^2\To N_0$ such that $f\prec g$.
\end{lem}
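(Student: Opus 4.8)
The plan is to unwind the definition of $\prec$ and build a reduction map by hand. Recall that $f\prec g$ means there is a reduction map $\varepsilon:m_0^2\To m_1^2$, presented by an injection $e:m_0\To m_1^k$ and a base point $x\in m_1^{<\omega}$ with $|x|<k$, such that $f=g\circ\varepsilon$; explicitly $\varepsilon(u,v)=inc(e(u),e(v))$ for $u\neq v$ and $\varepsilon(u,u)=inc(e(u),x)$. Thus a reduction map is simply a finite configuration of points $\{e(u)\}$ in the tree $m_1^{<\omega}$ together with a base point $x$, and the range of $f=g\circ\varepsilon$ is exactly the set of $g$-colors of all incidences realized inside this configuration. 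So the whole lemma reduces to the purely combinatorial assertion: for every $n_0<|N_1|$ there is a finite configuration in $m_1^{<\omega}$ whose incidences realize exactly $n_0$ of the colors of $g$. Once such a configuration is found, the induced $f$ is surjective onto the corresponding $n_0$-element $N_0\subset N_1$, and $f\prec g$ holds by construction (and $K_1(\mathfrak{P}_f)$ embeds in $K_1(\mathfrak{P}_g)$ by Lemma~\ref{fprecg}).

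The basic gadget I would use is a \emph{comb hanging from a spine}. Fix a finite branch $x=(x_0,\ldots,x_{L-1})$, and for $u=1,\ldots,m_0$ let $e(u)$ follow $x$ up to a position $p_u$, with $p_1<\cdots<p_{m_0}<L$, and then branch off in a direction $a_u\neq x_{p_u}$ (padding afterwards to a common length $k>L$). A direct computation with $inc$ shows that for $u<v$ one has $inc(e(u),e(v))=(a_u,x_{p_u})=inc(e(u),x)$ while $inc(e(v),e(u))=(x_{p_u},a_u)$; hence the range of the associated $f$ is exactly
$$\{\,g(a_u,x_{p_u}) : 1\le u\le m_0\,\}\cup\{\,g(x_{p_u},a_u) : 1\le u\le m_0-1\,\},$$
the ``forward'' colors of the chosen pairs together with all their ``reverses'' \emph{except} that of the topmost tooth. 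One can additionally bring in a diagonal color $g(c,c)$ by adjoining a point $x^\frown c^\frown(\cdots)$ that prolongs the whole spine, at the cost of also reactivating the suppressed top reverse, so this is a $+1$ or $+2$ move.

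With these gadgets the plan is to realize a color set of each cardinality $1,2,\ldots,|N_1|$ by building the comb one tooth at a time, arranging that each new tooth contributes exactly one new color. The subtlety to watch is that adding a tooth with forward pair $(i,j)$ throws in not only the intended color $g(i,j)$ but also the reverse color $g(j,i)$, and it un-freezes the reverse of the previous top tooth. I would maintain the invariant that the reverse color of the current topmost tooth already lies in the range realized so far; then, at each step, I would look for a still-missing color possessing a representative pair whose reverse is \emph{already present} (diagonal-only colors being introduced instead by the prolonging gadget), and append a tooth realizing it. Iterating from a single tooth up to the full square $m_1\times m_1$ (which realizes all of $N_1$) would produce an achievable color set of every intermediate size, and selecting the one of size $n_0$ completes the argument.

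The hard part is exactly this bookkeeping of reverse incidences. It is not clear a priori that at every stage some missing color has a representative whose reverse is already realized, and in degenerate colorings (for instance when $g$ is symmetric, or when the ``reverse graph'' on colors is disconnected) the naive one-tooth-at-a-time induction can stall, since then the smallest realizable increments may come only in entangled pairs. The clean way to overcome this, and the route I expect the authors to take, is to pass to the language of gaps: by Lemma~11 of \cite{stronggaps} one has $f\prec g$ if and only if $\Gamma_f\le\Gamma_g$, and in that language the statement becomes simply that the $|N_1|$-gap $\Gamma_g$ contains, up to a permutation of colors, a sub-gap on any prescribed $n_0<|N_1|$ colors. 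This is part of the structure theory of analytic $n$-gaps developed there; it both organizes the reverse-color bookkeeping and disposes of the degenerate cases uniformly.
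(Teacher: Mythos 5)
Your reduction of the lemma to a finite combinatorial problem in the tree is correct, and your comb gadget is exactly the one the paper uses: a spine $x=(j_0,\ldots,j_m)$ with teeth $e(r)=(j_0,\ldots,j_{r-1},i_r,0,\ldots,0)$ realizes all forward colors $g(i_r,j_r)$, all reverse colors $g(j_r,i_r)$ \emph{except} that of the topmost tooth, and diagonal colors only via points prolonging the spine. But your proof has a genuine gap, which you half-acknowledge yourself: the one-tooth-at-a-time induction with the invariant ``the reverse of the current top tooth is already realized'' really can stall, and your fallback is not a proof. Concretely, take $m_1=4$, $g(0,1)=a$, $g(1,0)=b$, $g(2,3)=c$, $g(3,2)=d$, all other values equal to $a$, and $n_0=3$. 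After the teeth $(0,2)$ and $(1,0)$ you have realized $\{a,b\}$, and every representative of each missing color ($c$ only from $(2,3)$, $d$ only from $(3,2)$) has its reverse still missing, so no admissible tooth exists and the prolonging gadget only re-adds $a$; yet $\{a,b,c\}$ is realizable. As for the fallback: invoking Lemma 11 of \cite{stronggaps} to translate $f\prec g$ into $\Gamma_f\le\Gamma_g$ merely rephrases the lemma in gap language, and asserting that the rephrased statement ``is part of the structure theory developed there'' cites no actual theorem. The present paper proves the lemma directly by an elementary construction precisely because no such off-the-shelf statement is being invoked.

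What the paper does instead, and what your argument is missing, is a two-case split on $M=\{p\in N_1 : p=g(i,j)\text{ for some }i\neq j\}$ that makes your invariant unnecessary. If $|M|\le n_0$, include \emph{all} of $M$: one tooth per color of $M$ hung along the spine; then the uncontrolled reverse and cross incidences are harmless, since every off-diagonal incidence has color in $M$ by the very definition of $M$. The remaining $n_0-|M|$ colors are diagonal ones $g(w_i,w_i)$, realized by points $x^\frown w_i$ prolonging the spine; since all the $e(i)$ have equal length, the only diagonal incidences are these $(w_i,w_i)$, so the range is exactly $N_0 = M\cup(n_0\setminus M)$. If $|M|>n_0$, work only with off-diagonal pairs and exploit parity: during the greedy phase each added pair contributes at most two new colors (forward plus reverse), so the running count must pass through $n_0-1$ or $n_0$ before exceeding $n_0$; at that moment one final topmost tooth --- whose reverse is suppressed by your own gadget --- adjusts the count by exactly $0$ (repeat the pair $(i_0,j_0)$) or exactly $1$ (a representative of any still-missing color of $M$, which exists since $|M|>n_0-1$). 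In my example above this gives spine $(1,3)$, teeth $(0,1)$ then $(2,3)$, realizing $\{a,b,c\}$. This ``stop at $n_0-1$ or $n_0$, then correct with the reverse-free top tooth'' step, together with the $|M|\le n_0$ case for diagonal colors, is the missing idea; no connectivity of your reverse graph is ever needed.
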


\begin{proof}
Consider $$M = \{p\in N_1 : \exists i\neq j : g(i,j) = p\}.$$ We distinguish two cases. The first case is that $|M|\leq n_0$.  Without loss of generality, we can suppose that $M$ is of the form $M=\{0,1,\ldots,\xi\}$. For $i\in M$, find $u_i,v_i\in m_1$, $u_i\neq v_i$ such $g(u_i,v_i) = i$. For $i\in n_0\setminus M$, find $w_i\in m_1$ such that $g(w_i,w_i)=i$. We consider $m_0=n_0= N_0$ and the reduction map $\varepsilon:m_0^2\To m_1^2$ induced by:
\begin{itemize}
\item $x = (v_0,\ldots, v_\xi)$,
\item $e(i) = (v_0,\ldots,v_{i-1},u_i,0,0,\ldots,0)$ for $i\in M$,
\item $e(i) =(v_0,\ldots, v_\xi,w_i)$, for $i\in m_0\setminus M$.
\end{itemize}
In this way, we get a function $f=g\circ \varepsilon\prec g$. Notice that all posible incidences of the $e(i)$'s and $x$ of the form $(u,u)$ are necessarily of the form $(w_i,w_i)$ for $i\in n_0\setminus M$. This implies that the range of $f$ is contained in $N_0$. On the other hand, if $i\in M$, then $f(i,i) = g(inc(e(i),x) = g(u_i,v_i) = i$, and if $i\in m_0\setminus M$, then $f(i,i) = g(inc(e(i),x) = (w_i,w_i) = i$, so $f:m_0^2\To N_0$ is onto. 

The second case is that $|M|>n_0$. In this case we only need to deal with pairs $(i,j)$ with $i\neq j$. We will need to find suitable $m_0$, $k$ and $x$ and $e:m_1\To m_0^{<k}$ so that the induced reduction map $\varepsilon$ has the propert that the range $g\circ \varepsilon$ has cardinality $n_0$. The point $x$ will be of the form $x=(j_0,j_1,\ldots,j_m)$, and the function $e$ of the form $e(r) = (j_0,\ldots,j_{r-1},i_r,0,\ldots,0)$. In this way, the range of $g\circ \varepsilon$ is exactly the set 
$$\{g(i_0,j_0),g(j_0,i_0), g(i_1,j_1), g(j_1,i_1),\ldots, g(i_{m-1},j_{m-1}), g(j_{m-1},i_{m-1}), g(i_m,j_m)\}.$$
Notice the exclusion of $g(j_m,i_m)$ from this set. It is easy to obtain integers making the above set to have cardinality exactly $n_0$. We can keep adding pairs $(i,j)$ till we get a set
$$\{g(i_0,j_0),g(j_0,i_0),\ldots, g(i_{m-1},j_{m-1}), g(j_{m-1},i_{m-1})\}$$ 
of cardinality either $n_0$ or $n_0-1$. If it is $n_0$, then declare $i_m=i_0$, $j_m=j_0$. If otherwise that cardinality is $n_0-1$, then pick $(i_m,j_m)$ such that $g(i_m,j_m)$ is a new value.
\end{proof}

Given a set $A$, we use the notation $\langle A\rangle ^2 = \{(i,j) \in A^2 : i\neq j\}$.

We fix a natural number $n$.  A strong-dense-type is a collection of data of the form $\alpha = (A,B,C,D,E,\psi,\mathcal{P},\gamma)$ where
\begin{enumerate}
\item $n=A\cup B\cup C\cup D\cup E$ is a partition of $n$.
\item $\psi: \langle A \rangle ^2 \To B$ is a surjective function.
\item $\mathcal{P}$ is a partition of $C$ into sets of cardinality either 1 or 2.
\item $\gamma: D\To B\cup E$ is a function such that $|\gamma^{-1}(k)|\geq 2$ for every $k\in E$.
\end{enumerate}
with the additional property that if $A=\emptyset$ then $B=D=E=\emptyset$ and all elements of $\mathcal{P}$ have cardinality 2. This definition is the same as \cite[Definition 19]{stronggaps} except that we do not allow the value $\infty$ for $\psi$ and $\gamma$ (this is because here we are only interested in dense gaps). We write $|\alpha| = n$.

To each such set of data $\alpha$, we can associate a surjective function $f^\alpha:m\times m \To n$, and the correspondig partition $\mathfrak{P}_{f^\alpha}$ of $m\times m$, for certain $m$. It is convenient to consider a finite set $M$ rather than a natural number $m$. The set $M$ can be later identified with $m = |M|$ through enumeration. This set is $M = A^\ast\cup \mathcal{P}\cup D$ where $A^\ast = A$ if $A\neq \emptyset$, and $A^\ast = \{0\}$ if $A=\emptyset$. In order to define $f^\alpha:M^2\To n$ we need some further notation. For $a\in \mathcal{P}$, let $\sigma(a) = \min(a)$ and $\tau(a) = \max(a)$ so that $a = \{\sigma(a),\tau(a)\}$ for every $a\in\mathcal{P}$. We define $\sigma(k) = k$ for $k\in A^\ast\cup D$, and $\tau(k) = \gamma(k)$ for $k\in D$. Notice that $\sigma:M\To n$ and $\tau:\mathcal{P}\cup D \To n$. The function $f = f^\alpha$ is defined as:
\begin{enumerate}
\item $f(i,i) = \sigma(i)$ for $i\in M$;
\item $f(i,j) = \psi(i,j)$ for $i,j\in A$, $i\neq j$;
\item $f(i,j) = \sigma(i)$ if $i\in M\setminus A^\ast$ and ($j\in A^\ast$ or $\sigma(i)<\sigma(j)$).
\item $f(i,j) = \tau(j)$ if $j\in M\setminus A^\ast$ and ($i\in A^\ast$ or $\sigma(i)>\sigma(j)$).\\
\end{enumerate}

\begin{lem}\label{minimal}
If $n\leq |\mathfrak{P}|$, then $K_1(\mathfrak{P})$ contains a homeomorphic copy of a space of the form $K_1(\mathfrak{P}_{f^\alpha})$ for some strong-dense-type $\alpha$ with $|\alpha| = n$.
\end{lem}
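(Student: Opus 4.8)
The plan is to reduce the statement to the combinatorial machinery already developed for surjections $f:m^2\to n$ and the reduction relation $\prec$. Since $K_1(\mathfrak{P})$ is built from a partition $\mathfrak{P}$ of $m\times m$, I would first write $\mathfrak{P} = \mathfrak{P}_g$ for the canonical surjection $g:m^2\to|\mathfrak{P}|$ that sends each pair to the index of the piece containing it. The hypothesis $n\leq|\mathfrak{P}|=|N_1|$ then puts us exactly in the situation of Lemma~\ref{nsubset}: I would apply it with $N_1=|\mathfrak{P}|$ and the chosen $n_0=n$ to obtain a subset $N_0\subset N_1$ of cardinality $n$ together with a surjection $f:m_0^2\to N_0$ satisfying $f\prec g$. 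By Lemma~\ref{fprecg}, $f\prec g$ yields a homeomorphic embedding of $K_1(\mathfrak{P}_f)$ into $K_1(\mathfrak{P}_g)=K_1(\mathfrak{P})$. So after this first move we have \emph{some} surjection $f:m_0^2\to N_0$ with $|N_0|=n$ whose associated space embeds into $K_1(\mathfrak{P})$.

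The remaining and genuinely delicate task is to upgrade this arbitrary $f$ to one of the \emph{normalized} form $f^\alpha$ coming from a strong-dense-type $\alpha$ with $|\alpha|=n$. Here I would invoke the structure theory of \cite{stronggaps}: the content of a strong-dense-type is precisely a combinatorial normal form for surjections $g:M^2\to n$ (equivalently for the associated strong $n$-gaps), where the partition $n=A\cup B\cup C\cup D\cup E$ records the different roles that a value $k\in n$ can play---whether it is attained off the diagonal by an asymmetric pair ($A,B,\psi$), whether it comes from a diagonal/singleton-or-pair configuration ($C,\mathcal P$), or whether it arises from the ``duplicated'' directions governed by $\gamma$ ($D,E$). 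The key step is therefore to show that every surjection $f:m_0^2\to N_0$ is $\prec$-equivalent (modulo a relabelling of $N_0$) to some $f^\alpha$; that is, $f^\alpha\prec f$ and $f\prec f^\alpha$ for a suitable $\alpha$, so that $K_1(\mathfrak{P}_{f^\alpha})$ and $K_1(\mathfrak{P}_f)$ are homeomorphic by two applications of Lemma~\ref{fprecg}. Concretely, I would read off $\alpha$ from the combinatorial invariants of $f$: sort each value $k\in N_0$ according to how its fibre $f^{-1}(k)$ meets the diagonal and the off-diagonal, set $A,B$ to capture the asymmetric off-diagonal behavior via $\psi$, group the symmetric diagonal values into $\mathcal P$, and encode the coincidences forced among directions into $\gamma:D\to B\cup E$.

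I expect the main obstacle to be verifying that the invariants extracted from $f$ actually satisfy all four clauses of the definition of a strong-dense-type simultaneously---in particular the side condition that $A=\emptyset$ forces $B=D=E=\emptyset$ and all members of $\mathcal P$ to have cardinality $2$, and the requirement $|\gamma^{-1}(k)|\geq 2$ for $k\in E$. These are exactly the constraints that single out the \emph{dense} minimal gaps among all analytic strong $n$-gaps, and matching them is where the hypothesis ``$\mathfrak{P}$ is a partition'' (as opposed to merely a family of sets, as in the $K_\infty$ construction) and the fact that we work only with finite values of $\psi,\gamma$ are used. Once the normal form $\alpha$ is in hand, the equivalence $f\approx f^\alpha$ and hence the homeomorphism $K_1(\mathfrak{P}_f)\cong K_1(\mathfrak{P}_{f^\alpha})$ should follow by matching reduction maps as in \cite[Lemma 11]{stronggaps}, completing the chain of embeddings $K_1(\mathfrak{P}_{f^\alpha})\cong K_1(\mathfrak{P}_f)\hookrightarrow K_1(\mathfrak{P})$.
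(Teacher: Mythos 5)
Your first step coincides with the paper's: write $\mathfrak{P}=\mathfrak{P}_g$ for the canonical surjection $g$, use Lemma~\ref{nsubset} to produce a surjection $f$ onto an $n$-element set with $f\prec g$ (when $n=|\mathfrak{P}|$ one simply takes $f=g$, since Lemma~\ref{nsubset} is stated only for $n_0<|N_1|$), and then Lemma~\ref{fprecg} embeds $K_1(\mathfrak{P}_f)$ into $K_1(\mathfrak{P})$. At that point the paper finishes by quoting the finite-basis results of \cite{stronggaps} for the one-directional statement that some strong-dense-type $\alpha$ with $|\alpha|=n$ satisfies $f^\alpha\prec f$; one more application of Lemma~\ref{fprecg} then gives $K_1(\mathfrak{P}_{f^\alpha})\subset K_1(\mathfrak{P}_f)\subset K_1(\mathfrak{P})$ up to homeomorphic embedding, which is all the lemma asserts.

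Your second step asks for much more than this, and that is where the proposal breaks down. You propose to show that \emph{every} surjection $f:m_0^2\To N_0$ is $\prec$-equivalent, modulo relabelling of $N_0$, to some normal form $f^\alpha$, so that $K_1(\mathfrak{P}_f)$ would be \emph{homeomorphic} to $K_1(\mathfrak{P}_{f^\alpha})$. This claim is false: the types $f^\alpha$ are the \emph{minimal} elements of the $\prec$-ordering, and a general $f$ lies above some minimal type without being equivalent to any. Concretely, take $m=3$ and the partition $\mathfrak{P}$ of $3\times 3$ into the two pieces $P_1=\{(0,1),(2,2)\}$ and $P_2=(3\times 3)\setminus P_1$. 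Since $(0,2)$ and $(2,0)$ lie in the same piece, $K_1(\mathfrak{P})$ contains a Cantor set by Lemma~\ref{containsthecantor}; since $(0,1)$ and $(1,0)$ lie in different pieces, it contains a split interval by Lemma~\ref{containsthesplit}. But, up to permutation, the only dense types with $|\alpha|=2$ yield the partitions $\mathfrak{P}_2^0$ and $\mathfrak{P}_2^1$ of Section~\ref{sectionlow}, and the same two lemmas (both are equivalences) show that $K_1(\mathfrak{P}_2^0)$ contains no Cantor set while $K_1(\mathfrak{P}_2^1)$ contains no split interval. Hence this $K_1(\mathfrak{P})$ is homeomorphic to neither normal-form space, even though its defining map is a surjection onto a two-element set; relabelling the range or permuting $m$ cannot repair this, since neither changes the homeomorphism type. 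For the same reason, your plan of ``reading off'' $\alpha$ from combinatorial invariants of $f$ cannot work: which minimal type sits below $f$ is not a matter of renaming fibres, and producing the reduction map $\varepsilon$ witnessing $f^\alpha\prec f$ is precisely the nontrivial content of the basis theorem of \cite{stronggaps}. The repair is simply to drop the equivalence claim: the lemma only asserts that $K_1(\mathfrak{P})$ \emph{contains a copy} of some $K_1(\mathfrak{P}_{f^\alpha})$, so the one-directional relation $f^\alpha\prec f$, cited from \cite{stronggaps}, suffices.
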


\begin{proof}
By Lemma~\ref{fprecg} and Lemma~\ref{nsubset}, $K_1(\mathfrak{P})$ contains $K_1(\mathfrak{P}_f)$ for some surjective $f:m^2\To n$. It follows from the results of~\cite{stronggaps}, that there exists $$\alpha = (A,B,C,D,E,\psi,\mathcal{P},\gamma)$$ with $|\alpha|=n$ such that $f^\alpha\prec f$.
\end{proof}

\section{The open degree}\label{sectionopen}

In this section, we study the open degree $odeg(K)$ of Definition~\ref{defodeg}, and we compute $odeg(K_1(\mathfrak{P}))$ and $odeg(K_\infty(\mathfrak{Q}))$. A first observation is that, by a standard compactness argument, one can substitute points by compact sets.

\begin{lem}\label{separationofclosed}
If $odeg(K)\leq n$, then there exists a countable family $\mathcal{F}$ of open sets such that for every pairwise disjoint compact subsets $L_0,\ldots,L_n\subset K$ there exists $V_0,\ldots,V_n\in \mathcal{F}$ such that $L_i\subset V_i$ and $\bigcap_i V_i = \emptyset$.
\end{lem}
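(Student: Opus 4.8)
The plan is to take the countable family $\mathcal{F}_0$ witnessing $odeg(K)\leq n$ on $(n+1)$-tuples of distinct points (Definition~\ref{defodeg}), and let $\mathcal{F}$ be the smallest family of open sets containing $\mathcal{F}_0$ and closed under finite unions and finite intersections; equivalently, $\mathcal{F}$ consists of all finite unions of finite intersections of members of $\mathcal{F}_0$ (together with $\emptyset$, viewed as the empty union). Since $\mathcal{F}_0$ is countable, so is $\mathcal{F}$, and every member of $\mathcal{F}$ is open. I claim this $\mathcal{F}$ works.

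First I would reduce to a finite problem. Given pairwise disjoint compact sets $L_0,\ldots,L_n$, the coordinates of any tuple $(x_0,\ldots,x_n)\in L_0\times\cdots\times L_n$ are automatically distinct (a repeat would force two of the $L_i$ to meet), so Definition~\ref{defodeg} supplies $V_0^x,\ldots,V_n^x\in\mathcal{F}_0$ with $x_i\in V_i^x$ and $\bigcap_i V_i^x=\emptyset$. The boxes $V_0^x\times\cdots\times V_n^x$ form an open cover of the compact product $L_0\times\cdots\times L_n$, so finitely many of them, indexed by $s\in R$, already cover it. Thus we are left with a purely combinatorial statement: given a finite family of open boxes $\prod_{i}V_i^s$ ($s\in R$) covering $\prod_i L_i$, each satisfying $\bigcap_i V_i^s=\emptyset$, produce open sets $W_i\supset L_i$ with $\bigcap_i W_i=\emptyset$ and $W_i\in\mathcal{F}$.

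It is worth stressing that the obvious attempt $W_i=\bigcup_s V_i^s$ fails, since the ``mixed'' intersections $\bigcap_i V_i^{s_i}$ with non-constant $s_i$ need not be empty; resolving this mismatch is the real content of the lemma. To get around it I would prove, by induction on the number $k$ of coordinates, the following sharper statement carrying an auxiliary constraint set: for any set $U$, compact sets $L_1,\ldots,L_k$, and finite family of boxes $\prod_{i=1}^k V_i^s$ covering $\prod_i L_i$ with $U\cap\bigcap_i V_i^s=\emptyset$ for every $s$, there are open $W_i\supset L_i$ lying in $\mathcal{F}$ with $U\cap\bigcap_i W_i=\emptyset$. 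The base case $k=1$ is immediate from $W_1=\bigcup_s V_1^s$. For the inductive step I fix $x\in L_1$, set $R_x=\{s:x\in V_1^s\}$ and $U''_x=\bigcap_{s\in R_x}V_1^s$ (an open neighbourhood of $x$), and apply the inductive hypothesis on coordinates $2,\ldots,k$ with the tightened constraint $U\cap U''_x$ to the slice cover over $x$; this is legitimate because $U\cap U''_x\subset U\cap V_1^s$ forces $(U\cap U''_x)\cap\bigcap_{i\geq2}V_i^s=\emptyset$ for every $s\in R_x$. This yields $W_2^x,\ldots,W_k^x$. Covering $L_1$ by finitely many of the $U''_x$, say for $x\in\{x^1,\ldots,x^p\}$, one sets $W_1=\bigcup_t U''_{x^t}$ and $W_i=\bigcap_t W_i^{x^t}$ for $i\geq 2$; any point of $U\cap\bigcap_i W_i$ would lie in some $U''_{x^t}$ and in $\bigcap_{i\geq2}W_i^{x^t}$, contradicting $(U\cap U''_{x^t})\cap\bigcap_{i\geq2}W_i^{x^t}=\emptyset$. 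Applying this with $k=n+1$ and $U=K$ gives the lemma.

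Finally I would check the bookkeeping that keeps everything inside $\mathcal{F}$: at each stage $W_1$ is a finite union of finite intersections of coordinate-$1$ boxes, and each $W_i$ for $i\geq2$ is a finite intersection of sets already of that form, which after distributing is again a finite union of finite intersections of members of $\mathcal{F}_0$; hence all the $W_i$ belong to $\mathcal{F}$. The main obstacle is exactly the failure of the naive construction and the need to find the right inductive invariant; once the auxiliary constraint set $U$ is built into the statement, the tube-lemma style shrinking runs smoothly and the closure bookkeeping is routine.
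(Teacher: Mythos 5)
Your proof is correct, and it differs from the paper's in how the induction is organized, though the key ingredients are the same. Both arguments begin identically (close the witnessing family under finite unions and intersections) and both end with the same mechanism: the final sets are a finite union in one coordinate and finite intersections in the others, and the verification that the resulting intersection is empty traces any point back to a single pointwise witness. The paper, however, inducts on the number of non-singleton sets among $L_0,\ldots,L_n$: for each $x$ in a chosen non-singleton $L_p$ it applies the inductive hypothesis to the tuple $(L_0,\ldots,L_{p-1},\{x\},L_{p+1},\ldots,L_n)$, so each pointwise witness $V_i^x$ already contains the \emph{whole} compact set $L_i$ for $i\neq p$; compactness of $L_p$ plus union/intersection then finishes the step, with no auxiliary invariant needed. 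You instead invoke compactness of the product $L_0\times\cdots\times L_n$ first, which leaves you with boxes that are pointwise in \emph{all} coordinates simultaneously; this is precisely why the naive union fails (as you correctly flag) and why you must strengthen the inductive statement with the constraint set $U$ and re-use compactness of $L_1$ inside each step of the coordinate induction. The two routes cost about the same; the paper's choice of inductive statement (compact sets in all coordinates, one replaced by a point) makes the globalization automatic and so avoids both the product-cover reduction and the auxiliary invariant, while your version isolates a purely finite combinatorial lemma about box covers, at the price of carrying $U$ through the recursion.
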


\begin{proof}
Start with a countable family $\mathcal{F}$ of open sets that satisfies the definition of $odeg(K)$, meaning that for every different $x_0,\ldots,x_n\in K$ there exists $V_0,\ldots,V_n\in \mathcal{F}$ such that $x_i\in V_i$ and $\bigcap_i V_i = \emptyset$. We can suppose that $\mathcal{F}$ is closed under finite unions and intersections. With this additional assumption, we see that the family $\mathcal{F}$ now satisfies the condition stated in the lemma. We proceed by induction on $p = |\{i<n : |L_i|>1\}|$. The case $p=0$ is clear. If we are in the case $p$, we are given $L_1,\ldots,L_n$ where $|L_i|=1$ for $i>p$, and we want to find $V_i\in \mathcal{F}$ with $L_i\subset V_i$ and $\bigcap V_i = \emptyset$. By the inductive hypothesis, for every $x\in L_p$ there exist $V_i^x\in \mathcal{F}$ such that $x\in V_p^x$, $L_i\subset V_i^x$ and $\bigcap_i V_i^x = \emptyset$. By compactness there exists a finite set $G\subset L_p$ such that $L_p\subset\bigcup_{x\in G} V^x_p$. It is finally enough to consider $V_p = \bigcup_{x\in G}V^x_p$ and $V_i = \bigcap_{x\in G} V_i^x$ for all $i\neq p$.
\end{proof}

Notice that $odeg(K) = 0$ only in the trivial case when $K=\emptyset$. The next case is the following:

\begin{prop}
If $K\neq \emptyset$, then $odeg(K) = 1$ if and only if $K$ is metrizable.
\end{prop}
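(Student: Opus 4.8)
The plan is to show that $odeg(K)=1$ is equivalent to $K$ having a countable base, which for compact Hausdorff spaces is the same as metrizability. I will treat ``compact'' as ``compact Hausdorff'' (as for Rosenthal compacta), though in the harder direction the Hausdorff property will in fact be recovered from the hypothesis. First I would dispose of the trivial bound: for $K\neq\emptyset$ one cannot have $odeg(K)=0$, since that would force a single point $x_0$ to lie in some $V_0\in\mathcal{F}$ with $\bigcap_i V_i=V_0=\emptyset$. Hence $odeg(K)\geq 1$, and it only remains to characterize when $odeg(K)\leq 1$.

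For the implication ``$K$ metrizable $\Rightarrow odeg(K)=1$'', I would take $\mathcal{F}$ to be a countable base of the (second countable) compact metric space $K$. Given distinct points $x_0,x_1$, Hausdorffness yields disjoint open neighborhoods, and shrinking each to a basic set produces $V_0,V_1\in\mathcal{F}$ with $x_i\in V_i$ and $V_0\cap V_1=\emptyset$. Thus $odeg(K)\leq 1$, and combined with the previous paragraph $odeg(K)=1$.

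The substance is in the converse. Starting from a countable family $\mathcal{F}$ witnessing $odeg(K)=1$, let $\tau$ denote the original topology and let $\tau'$ be the topology on the underlying set generated by $\mathcal{F}$ as a subbasis. Then $\tau'\subseteq\tau$, since each member of $\mathcal{F}$ is $\tau$-open; the topology $\tau'$ is second countable, because the finite intersections of members of $\mathcal{F}$ form a countable base; and $(K,\tau')$ is Hausdorff, precisely because the defining property of $\mathcal{F}$ separates any two distinct points by disjoint $\tau'$-open sets. Now the identity $\mathrm{id}\colon (K,\tau)\To(K,\tau')$ is a continuous bijection from a compact space onto a Hausdorff space, hence a homeomorphism: it carries $\tau$-closed (thus compact) sets to compact, hence $\tau'$-closed, sets. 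Therefore $\tau=\tau'$ is second countable, and a second countable compact Hausdorff space is metrizable by Urysohn's metrization theorem (compactness together with Hausdorffness gives normality, a fortiori regularity). So $K$ is metrizable.

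The hard part is this converse, and within it the key move is to read the separating family $\mathcal{F}$ as a subbasis generating a \emph{coarser} second countable Hausdorff topology, and then to invoke the rigidity of continuous bijections from compact to Hausdorff spaces to conclude that this coarser topology coincides with the original one. Once that identification is made, metrizability is immediate from classical results, and everything else reduces to routine point-set topology.
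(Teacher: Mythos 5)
Your proof is correct, but it takes a genuinely different route from the paper's. The paper first invokes its Lemma~\ref{separationofclosed} (applied with $n=1$): after closing the witnessing family $\mathcal{F}$ under finite unions and intersections, a compactness induction upgrades the separation of \emph{points} to the separation of arbitrary disjoint \emph{closed} sets by disjoint members of $\mathcal{F}$. From this stronger property it reads off immediately that $\mathcal{F}$ is itself a countable basis of the original topology: given $x\in U$ with $U$ open, the disjoint closed sets $\{x\}$ and $K\setminus U$ are separated by disjoint $V,W\in\mathcal{F}$, so $x\in V\subset K\setminus W\subset U$. You instead leave $\mathcal{F}$ as a bare point-separating family, let it generate a coarser second countable Hausdorff topology $\tau'$, and use the standard rigidity fact that a continuous bijection from a compact space onto a Hausdorff space is a homeomorphism to conclude $\tau=\tau'$. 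The two arguments buy slightly different things: yours is self-contained (it never needs the set-separation lemma) and, as you note, it even recovers the Hausdorff property of $K$ from the hypothesis rather than assuming it; the paper's argument produces the sharper conclusion that the witnessing family, suitably enlarged, is literally a basis of $K$, and it routes through a lemma stated for all degrees $n$, which fits the paper's broader study of the open degree. Both approaches terminate in the same classical fact that a second countable compact Hausdorff space is metrizable, so neither has a gap; the difference is purely in how second countability of the original topology is extracted from $\mathcal{F}$.
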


\begin{proof}
Remember that $K$ is metrizable if and only if it has a countable basis of open sets. So if $K$ is metrizable, then the countable basis $\mathcal{F}$ shows that $odeg(K)\leq 1$. Coversely, if $odeg(K)\leq 1$, then by Lemma~\ref{separationofclosed}, $K$ has a countable family $\mathcal{F}$ of open sets such that every two disjoint closed sets are separated by disjoint elements of $\mathcal{F}$. This implies that $\mathcal{F}$ is a countable basis for the topology of $K$, so $K$ is metrizable.
\end{proof}

The following related notion (in the case $n=2$) was relevant in the analysis of \cite{1BC}:

\begin{defn}
A compact space $K$ is a \emph{premetric compactum of degree at most $n$} if  there exists a continuous function $h:K\To M$ onto a metrizable compactum such that $|h^{-1}(x)|\leq n$ for every $x\in M$.
\end{defn}

\begin{prop}\label{premetricdegree}
$K$ is a premetric compactum of degree at most $n$ if and only if there exists a countable family $\mathcal{F}$ of open $F_\sigma$-sets such that for every $x_0,\ldots,x_n\in K$ there exist $V_i\in\mathcal{F}$ with $x_i\in V_i$ and $\bigcap_i V_i = \emptyset$. In particular, if $K$ is a premetric compactum of degree at most $n$, then $odeg(K)\leq n$.
\end{prop}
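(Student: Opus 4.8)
The plan is to prove the two implications separately, reading $x_0,\dots,x_n$ as \emph{distinct} points as in Definition~\ref{defodeg} (for a tuple with a repetition the condition is vacuously unsatisfiable, since a point lying in every $V_i$ would prevent $\bigcap_i V_i=\emptyset$). Throughout I would use that a compact Hausdorff space is normal, together with the elementary fact that in a normal space the open $F_\sigma$ sets are exactly the cozero sets. I would record this as a one-line preliminary: if $U=\bigcup_m F_m$ is open with each $F_m$ closed, Urysohn's lemma gives continuous $g_m:K\To[0,1]$ with $g_m=1$ on $F_m$ and $g_m=0$ off $U$, and then $f_U=\sum_m 2^{-m}g_m$ is continuous with $\{f_U>0\}=U$. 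This identification is what lets me replace the non-continuous indicator of a $V\in\mathcal{F}$ by a genuine continuous coordinate, and I expect it to be the only step needing care.

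For the forward direction, assume $h:K\To M$ is continuous onto a metrizable compactum with $|h^{-1}(y)|\leq n$ for all $y$. I would fix a countable basis $\mathcal{B}$ of $M$ and set $\mathcal{F}=\{h^{-1}(B):B\in\mathcal{B}\}$; each such set is open by continuity and $F_\sigma$ because open subsets of a metric space are $F_\sigma$ and preimages of $F_\sigma$ sets are $F_\sigma$. The key observation is a pigeonhole: given distinct $x_0,\dots,x_n$, they cannot all lie in one fiber since fibers have at most $n$ points, so the images $h(x_0),\dots,h(x_n)$ realize at least two distinct values. Choosing, for each value $y$ occurring, a basic set $B_y\in\mathcal{B}$ around it with the finitely many chosen sets pairwise disjoint, and putting $V_i=h^{-1}(B_{h(x_i)})$, I get $x_i\in V_i$ while $\bigcap_i V_i$ is contained in $h^{-1}(B_y\cap B_{y'})=\emptyset$ for two of the distinct disjoint values. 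This is exactly the required separation, and since the $V_i$ are open it also gives $odeg(K)\leq n$.

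For the converse, assume $\mathcal{F}$ is a countable family of open $F_\sigma$ sets with the stated property. Using the preliminary, I would pick for each $V\in\mathcal{F}$ a continuous $f_V:K\To[0,1]$ with $\{f_V>0\}=V$ and form the continuous map $\Phi:K\To[0,1]^{\mathcal{F}}$, $\Phi(x)=(f_V(x))_{V\in\mathcal{F}}$. As $\mathcal{F}$ is countable, $[0,1]^{\mathcal{F}}$ is metrizable, so $M:=\Phi(K)$ is a metrizable compactum and $h:=\Phi$ is a continuous surjection onto it. To bound the fibers, suppose $x_0,\dots,x_n$ were $n+1$ distinct points with $\Phi(x_0)=\cdots=\Phi(x_n)$. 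Then $f_V(x_0)=\cdots=f_V(x_n)$ for every $V$, whence $x_0\in V\Leftrightarrow f_V(x_0)>0\Leftrightarrow f_V(x_i)>0\Leftrightarrow x_i\in V$; that is, all the $x_i$ belong to exactly the same members of $\mathcal{F}$. The hypothesis produces $V_0,\dots,V_n\in\mathcal{F}$ with $x_i\in V_i$ and $\bigcap_i V_i=\emptyset$, but then $x_0\in V_i$ for every $i$, forcing $x_0\in\bigcap_i V_i=\emptyset$, a contradiction. Hence every fiber has at most $n$ points and $K$ is a premetric compactum of degree at most $n$. Both directions are short; the main (modest) obstacle is simply ensuring the normality-based passage from open $F_\sigma$ sets to cozero functions, after which the metrizable-cube embedding and the fiber count are routine.
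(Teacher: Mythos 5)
Your proof is correct and takes essentially the same approach as the paper's: in the forward direction $\mathcal{F}=\{h^{-1}(B):B\in\mathcal{B}\}$ for a countable basis $\mathcal{B}$ of $M$, and in the converse direction cozero functions assembled into a map into the metrizable cube $[0,1]^{\mathcal{F}}$; your Urysohn preliminary, the pigeonhole on fibers, and the fiber-count argument are precisely the verifications the paper leaves implicit. The only inaccuracy is your parenthetical aside---for a tuple with a repetition such as $x_0=x_1\neq x_2$ the separation condition can still be satisfiable (it becomes unsatisfiable exactly when the distinct points of the tuple fit inside a single fiber)---but since you correctly read $x_0,\ldots,x_n$ as distinct points, in line with Definition~\ref{defodeg}, this side remark has no bearing on the validity of the argument.
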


\begin{proof}
If $h:K\To M$ is as above, it is enough to consider $\mathcal{F} = \{h^{-1}(W) : W\in\mathcal{B}\}$ where $\mathcal{B}$ is a countable basis for the topology of $M$. Conversely, if we have such a family $\mathcal{F}$, for each $W\in \mathcal{F}$ we can find a continuous function $h_W:K\To [0,1]$ such that $h_W^{-1}(0,1] = W$, and then we consider $h:K \To [0,1]^\mathcal{F}$ given by $h(x) = (h_W(x))_{W\in\mathcal{F}}$, and $M = h(K)$.
\end{proof}

As we will show later, the converse of the last statement of  Proposition~\ref{premetricdegree} is not true.

\begin{thm}\label{odegcomputation}
$odeg(K_1(\mathfrak{P})) = |\mathfrak{P}|$ and $odeg(K_\infty(\mathfrak{Q})) = |\mathfrak{Q}|+1$.
\end{thm}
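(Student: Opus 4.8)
The plan is to prove each equality by matching lower and upper bounds. Write $N=|\mathfrak{P}|$ and $L=|\mathfrak{Q}|$. For the lower bounds I would invoke Lemma~\ref{notcountablyseparated}, which is precisely the combinatorial obstruction that stops a countable family from separating few points; for the upper bounds I would exhibit an explicit countable family of open sets and reduce everything to a counting argument.

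\emph{Lower bounds.} Fix any countable family $\mathcal{F}$ of open sets and read it off on the isolated points: set $\mathcal{C}=\{\{s\in m^{<\omega}:\mathbf{f}_s\in V\}:V\in\mathcal{F}\}$. By Lemma~\ref{notcountablyseparated} the families $\{\mathcal{I}_P:P\in\mathfrak{P}\}$ of combs are not countably separated, so $\mathcal{C}$ fails to separate them: there are combs $a_P\in\mathcal{I}_P$, say $a_P$ an $(i_P,j_P)$-sequence over a branch $x_P$ with $(i_P,j_P)\in P$, so that whenever $b_P\in\mathcal{C}$ with $a_P\setminus b_P$ finite one has $\bigcap_P b_P\ne\emptyset$. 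I claim the $N$ distinct points $\{\mathbf{f}_{(x_P,P)}:P\in\mathfrak{P}\}$ cannot be separated by $\mathcal{F}$: by Proposition~\ref{convergenceK1} each $a_P$ converges to $\mathbf{f}_{(x_P,P)}$, so any $V_P\in\mathcal{F}$ with $\mathbf{f}_{(x_P,P)}\in V_P$ contains a tail of $a_P$, i.e. $a_P\setminus b_P$ is finite for $b_P=\{s:\mathbf{f}_s\in V_P\}$; then a point of $\bigcap_P b_P$ lies in all $V_P$, so $\bigcap_P V_P\ne\emptyset$. Hence $odeg(K_1(\mathfrak{P}))\ge N$. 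For $K_\infty(\mathfrak{Q})$ the same argument is applied to the partition $\mathfrak{P}^\ast$ of $m\times m$ whose pieces are the diagonals $\{(i,i):i\in P\}$ for $P\in\mathfrak{Q}$ together with one remaining piece $R^\ast=\{(i,j):i\ne j\}\cup\{(i,i):i\notin\bigcup\mathfrak{Q}\}$ (nonempty for $m\ge2$), so that $|\mathfrak{P}^\ast|=L+1$; by Proposition~\ref{convergenceKinfty} the combs in the diagonal pieces converge to the $\mathbf{g}_{(x_P,P)}$ and those in $R^\ast$ converge to $\mathbf{g}_\infty$, giving $L+1$ inseparable points $\{\mathbf{g}_{(x_P,P)}:P\in\mathfrak{Q}\}\cup\{\mathbf{g}_\infty\}$ and hence $odeg(K_\infty(\mathfrak{Q}))\ge L+1$.

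\emph{Upper bounds.} Here I would build a countable separating family and reduce to finding, among any prescribed points, a single \emph{separable pair}: two of them lying in disjoint members of $\mathcal{F}$. Giving this pair its two disjoint sets and every other point the whole space then forces an empty intersection. For $K_1(\mathfrak{P})$ take $\mathcal{F}$ to be the singletons $\{\mathbf{f}_s\}$ and the clopen branch-cones $C_t=\{f:f(t)=1\}$, so that $\mathbf{f}_{(x,P)}\in C_t\iff t\le x$: two limit points over distinct branches lie in the disjoint cones $C_{x|_k},C_{x'|_k}$, an isolated point is separated from anything by its singleton against a deep cone, and the only non-separable pairs are two limit points over the \emph{same} branch. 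Since only $N$ points $\mathbf{f}_{(x,P)}$ sit over a fixed branch, any $N+1$ distinct points contain a separable pair, whence $odeg(K_1(\mathfrak{P}))\le N$. For $K_\infty(\mathfrak{Q})$ I would use the singletons $\{\mathbf{g}_s\}$, their complements, the whole space, and the open cones $C_s=\{\mathbf{g}_u:u\ge s\}\cup\{\mathbf{g}_{(x,P)}:x\ge s,\ P\in\mathfrak{Q}\}$; now the non-separable pairs are exactly those of two limit points over one branch and the pairs $\{\mathbf{g}_{(x,P)},\mathbf{g}_\infty\}$, so a configuration in which \emph{every} pair is non-separable consists of limit points over a single branch together with possibly $\mathbf{g}_\infty$, i.e. at most $L+1$ points. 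Thus any $L+2$ distinct points contain a separable pair and $odeg(K_\infty(\mathfrak{Q}))\le L+1$.

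The main obstacle is this last upper bound. Since $\mathbf{g}_\infty$ is not a $G_\delta$-point (Proposition~\ref{Kinftydescription}) and the only clopen sets distinguishing the continuum-many limit points are the uncountably many sets $\{f:f(y,Q)=1\}$, no countable family of \emph{clopen} sets can isolate an arbitrary limit point; the resolution is the open, non-clopen cone $C_s$ indexed by a \emph{finite} node $s$. The delicate point is verifying that $C_s$ is open, i.e. that its complement is (sequentially) closed: by Proposition~\ref{convergenceKinfty} and Lemma~\ref{densecombs} any sequence of isolated points converging to some $\mathbf{g}_{(x,P)}$ with $x\ge s$ is eventually an $(i,i)$-comb along $x$ (indeed eventually $u_n\le x$ from the coordinate $(x,P)$), hence eventually lies above $s$, so no sequence from the complement can reach a point of $C_s$. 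Once the cones are available, the purely combinatorial pigeonhole on the number of limit points over each branch closes both upper bounds, and the two estimates together give $odeg(K_1(\mathfrak{P}))=|\mathfrak{P}|$ and $odeg(K_\infty(\mathfrak{Q}))=|\mathfrak{Q}|+1$.
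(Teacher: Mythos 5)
Your proof is correct and follows essentially the same route as the paper: the upper bounds come from the same countable family of singletons and branch-cones together with the pigeonhole observation that at most $|\mathfrak{P}|$ (resp.\ $|\mathfrak{Q}|$, plus $\mathbf{g}_\infty$) limit points sit over a single branch, and the lower bounds come from transferring a hypothetical separating family to the isolated points and contradicting Lemma~\ref{notcountablyseparated} via Propositions~\ref{convergenceK1} and~\ref{convergenceKinfty}. The only noteworthy (and harmless) difference is in the lower bound for $K_\infty(\mathfrak{Q})$: you apply Lemma~\ref{notcountablyseparated} to the genuine partition $\mathfrak{P}^\ast$ of $m\times m$ consisting of the diagonal pieces $\{(i,i):i\in P\}$ and the full remainder $R^\ast$, whereas the paper uses the diagonal families together with only the $(0,1)$-combs; since every comb with incidence in $R^\ast$ converges to $\mathbf{g}_\infty$ by Proposition~\ref{convergenceKinfty}, your variant is, if anything, a slightly tidier fit to the lemma as stated.
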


\begin{proof}
We prove the four inequalities.

$odeg(K_1(\mathfrak{P})) \leq |\mathfrak{P}|$. For this, it is enough to consider the clopen sets $$V_t = \{f\in K_1(\mathfrak{P}) : f(t) = 1\} = \{\mathbf{f}_s : t\leq s\}\cup \{\mathbf{f}_{(x,P)} : t\leq x\}, $$
$$W_t = \{\mathbf{f}_t\} = \{f\in K_1(\mathfrak{P}) : f(t)=1, f(t^\frown i)=0, i<m\}$$ for $t\in m^{<\omega}$. Let us check that the family $\mathcal{F} = \{V_t, W_t, K\setminus W_t : t\in m^{<\omega}\}$ has the desired property. So we declare $n=|\mathfrak{P}|$ and we pick points $x_0,\ldots,x_n\in K_1(\mathfrak{P})$. If at least one of the points is of the form $\mathbf{f}_t$ with $t\in m^{<\omega}$, then we can simply take $W_t$ as its neighborhood, and $K\setminus W_t$ as the neighborhood of the rest. Otherwise, if all points are of the form $x_i = \mathbf{f}_{(y_i,P_i)}$, then since $|\mathfrak{P}| = n$, we can find $i,j$ such that $y_i\neq y_j$. Then we can pick  $s,t\in n^{<\omega}$ such that $y_i\wedge y_j < s$, $y_i\wedge y_j<t$, $s\leq y_i$, $t\leq y_j$. In that case $V_s$ is a neighborhood of $\mathbf{f}_{(y_i,P_i)}$, $V_t$ is a neighborhood of $\mathbf{f}_{(V_j,P_j)}$ and $V_s\cap V_t = \emptyset$.

$odeg(K_\infty(\mathfrak{Q})) \leq |\mathfrak{Q}|+1$. For this, we consider the sets
$$V_t = \{\mathbf{g}_s : t\leq s\} \cup \{\mathbf{g}_{(x,P)} : t\leq x\} ,$$
$$W_t = \{\mathbf{g}_t\} = \{g\in K_\infty(\mathfrak{Q}) : g(t) = 1\}.$$
Notice that $W_t$ is a clopen set, and that $V_t$ is open because
$$V_t = \bigcup_{t\leq y, P\in\mathfrak{Q}} \{ g \in K_\infty(\mathfrak{Q}) : g(y,P) = 1\} \setminus \bigcup_{s<t}W_s.$$
We check that $$\mathcal{F} = \{V_t,W_t , K\setminus W_t: t\in n^{<\omega}\}$$
satisfies that whenever we pick $x_0,\ldots,x_n\in K_\infty(\mathfrak{Q})$, with $n=|\mathfrak{Q}|+1$, we can find neighborhoods of each point in the family $\mathcal{F}$ which are disjoint. If at least one of the points is of the form $\mathbf{g}_t$ with $t\in m^{<\omega}$, then we can take $W_t$ and $K\setminus W_t$ as neighborhoods. Otherwise, all points are either of the form $x_i = \mathbf{g}_{(y_i,P)}$ or $\mathbf{g}_\infty$. Similarly as before, since $n=|\mathfrak{Q}|+1$, we can find  $i,j$ such that $y_i\neq y_j$. Then we can pick incomparable $s,t\in n^{<\omega}$ such that $s,t > y_i\wedge y_j$, $s\leq y_i$, $t\leq y_j$, and in that case $V_s$ is a neighborhood of $\mathbf{g}_{(y_i,P_i)}$, $V_t$ is a neighborhood of $\mathbf{g}_{(y_j,P_j)}$ and $V_s\cap V_t = \emptyset$.

$odeg(K_1(\mathfrak{P})) \geq |\mathfrak{P}|$. We suppose that we have a countable family $\mathcal{F}$ of open subsets of $K_1(\mathfrak{P})$ that satisfies the property of the definition of open degree for $n=|\mathfrak{P}|-1$ and we work towards a contradiction. Consider the family $$\mathcal{F}' = \{ \{s\in m^{<\omega} : \mathbf{f}_s\in V\} : V\in \mathcal{F}\}.$$
Claim: If we have $\{A_P\subset m^{<\omega} : P\in\mathfrak{P}\}$ such that $A_P$ is an $(i,j)$-sequence for some $(i,j)\in P$, then there exist sets $\{B_P : P\in \mathfrak{P}\}\subset \mathcal{F}'$ such that $A_P \setminus B_P$ is finite and $\bigcap_{P\in\mathfrak{P}} B_P = \emptyset$.

Proof of the claim: By Proposition~\ref{convergenceK1}, $\{\mathbf{f}_s : s\in A_P\}$ converges to a point of the form $\mathbf{f}_{(x_P,P)}$. We can find open sets $V_P\in \mathcal{F}$ such that $\mathbf{f}_{(x_P,P)}\in V_P$ and $\bigcap_P V_P = \emptyset$. It is enough to take $B_P = \{s\in m^{<\omega} : \mathbf{f}_s\in V_P\}\in \mathcal{F}'$. This finishes the proof of the claim.

The claim above implies that, if we denote  $$\mathcal{I}_P = \{A\subset m^{<\omega} : A \text{ is an }(i,j)\text{-comb for some }(i,j)\in P\},$$
we have that the families $\{\mathcal{I}_P : P\in\mathfrak{P}\}$ are countably separated, in contradiction with Lemma~\ref{notcountablyseparated}.

$odeg(K_\infty(\mathfrak{Q})) \geq |\mathfrak{Q}|+1$.  We suppose that we have a countable family $\mathcal{F}$ of open subsets of $K_\infty(\mathfrak{Q})$ that satisfies the property of the definition of open degree for $n=|\mathfrak{Q}|$ and we work towards a contradiction. Consider the family $$\mathcal{F}' = \{ \{s\in m^{<\omega} : \mathbf{g}_s\in V\} : V\in \mathcal{F}\}.$$
Claim: If we have $\{A_P\subset m^{<\omega} : P\in\mathfrak{Q}\cup\{\infty\}\}$ such that $A_P$ is an $(i,i)$-sequence for some $i\in P$, and $A_\infty$ is a $(0,1)$-comb, then there exist sets $\{B_P : P\in \mathfrak{Q}\cup\{\infty\}\}\subset \mathcal{F}'$ such that $A_P \setminus B_P$ is finite and $\bigcap_P B_P = \emptyset$.

Proof of the claim: By Proposition~\ref{convergenceKinfty}, $\{\mathbf{g}_s : s\in A_P\}$ converges to a point of the form $\mathbf{g}_{(x_P,P)}$ for $P\in \mathfrak{Q}$, and  $\{\mathbf{g}_s : s\in A_\infty\}$ converges to $\mathbf{g}_{\infty}$. Then, we can find open sets $V_P\in \mathcal{F}$ such that $\mathbf{g}_{(x_P,P)}\in V_P$, $\mathbf{g}_\infty\in V_\infty$ and $\bigcap\{ V_P : P\in \mathfrak{Q}\cup \{\infty\}\} = \emptyset$. It is enough to take $B_P = \{s\in m^{<\omega} : \mathbf{g}_s\in V_P\}\in \mathcal{F}'$. This finishes the proof of the claim.

The claim above implies that, if we denote  $$\mathcal{I}_P = \{A\subset m^{<\omega} : A \text{ is an }(i,i)\text{-comb for some }i\in P\},$$
for $P\in \mathfrak{Q}$, and $\mathcal{I}_\infty$ is the family of all $(0,1)$-combs of $m^{<\omega}$, then the families $\{\mathcal{I}_P : P\in\mathfrak{Q}\cup \{\infty\}\}$ are countably separated, in contradiction with Lemma~\ref{notcountablyseparated}.
\end{proof}

\begin{prop}
$K_1(\mathfrak{P})$ is a premetric compactum of degree $|\mathfrak{P}|$ but if $\mathfrak{Q}\neq \emptyset$, then $K_\infty(\mathfrak{Q})$ is not premetric of any degree.
\end{prop}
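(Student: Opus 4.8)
The plan is to prove two separate statements. The assertion that $K_1(\mathfrak{P})$ is premetric of degree $|\mathfrak{P}|$ follows almost immediately from the work already done: by Theorem~\ref{odegcomputation} we know $odeg(K_1(\mathfrak{P})) = |\mathfrak{P}|$, so it suffices to exhibit a continuous surjection onto a metrizable compactum with fibers of size at most $|\mathfrak{P}|$, and then invoke Proposition~\ref{premetricdegree} (whose forward direction gives $odeg \leq |\mathfrak{P}|$, matching the lower bound, so the degree is exactly $|\mathfrak{P}|$). The natural candidate map is the projection that forgets the partition label: send $\mathbf{f}_s \mapsto s$ and $\mathbf{f}_{(x,P)} \mapsto x$, landing in the metrizable space $m^{<\omega} \cup m^\omega$ (where the isolated points $s$ are retained and $x \in m^\omega$ carries the Cantor topology, making a compact metrizable target). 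The fibers over points of $m^\omega$ are exactly $\{\mathbf{f}_{(x,P)} : P \in \mathfrak{P}\}$, of size $|\mathfrak{P}|$, and over points of $m^{<\omega}$ the fiber is a single isolated point. The one thing I would verify carefully is continuity: using Proposition~\ref{convergenceK1} one checks that whenever $\mathbf{f}_{s_k} \to \mathbf{f}_{(x,P)}$ the images $s_k \to x$ in $m^{\leq\omega}$, and similarly for the second convergence statement, so the map respects limits of sequences and hence is continuous by the Fr\'echet-Urysohn property.

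For the negative assertion, the plan is to show that when $\mathfrak{Q} \neq \emptyset$ the point $\mathbf{g}_\infty$ obstructs premetricity of any finite degree. The key observation is Proposition~\ref{Kinftydescription}: if $\mathfrak{Q} \neq \emptyset$, then $\mathbf{g}_\infty$ is \emph{not} a $G_\delta$-point of $K_\infty(\mathfrak{Q})$. I would use Proposition~\ref{premetricdegree} in contrapositive form. Suppose $K_\infty(\mathfrak{Q})$ were premetric of degree at most $n$ via some continuous $h : K_\infty(\mathfrak{Q}) \To M$ with $M$ metrizable and all fibers of size at most $n$. Since $M$ is metrizable, the singleton $\{h(\mathbf{g}_\infty)\}$ is a $G_\delta$ in $M$, so its preimage $h^{-1}(h(\mathbf{g}_\infty))$ is a $G_\delta$ in $K_\infty(\mathfrak{Q})$. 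But this preimage is a finite set (size at most $n$) containing $\mathbf{g}_\infty$, and a finite $G_\delta$ set in a compact space has each of its points as a $G_\delta$-point. This would force $\mathbf{g}_\infty$ to be a $G_\delta$-point, contradicting Proposition~\ref{Kinftydescription}.

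The main obstacle, and the step deserving the most care, is the inference that a point lying inside a \emph{finite} $G_\delta$ subset of a compact Hausdorff space is itself a $G_\delta$-point. This is where I would spell out the argument: if $F = \{z_0, \ldots, z_r\}$ is a finite $G_\delta$ set with $z_0 = \mathbf{g}_\infty$, write $F = \bigcap_k U_k$ with each $U_k$ open; by Hausdorffness choose an open $U$ separating $z_0$ from the finitely many other points $z_1, \ldots, z_r$, and then $\{z_0\} = U \cap \bigcap_k U_k$ is a countable intersection of open sets, hence a $G_\delta$. This closes the argument. I would phrase the whole negative direction compactly, since the substance reduces entirely to the non-$G_\delta$ character of $\mathbf{g}_\infty$ already established, together with this elementary separation fact.
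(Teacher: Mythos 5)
Your proof is correct, but the positive half takes a different route from the paper. The paper disposes of $K_1(\mathfrak{P})$ in one line: the witnessing family $\{V_t, W_t, K\setminus W_t\}$ from the proof of Theorem~\ref{odegcomputation} consists of \emph{clopen} sets, hence of open $F_\sigma$-sets, so the characterization in Proposition~\ref{premetricdegree} applies directly; the lower bound (not premetric of degree $\leq|\mathfrak{P}|-1$) then comes, as in your argument, from $odeg(K_1(\mathfrak{P}))=|\mathfrak{P}|$. You instead exhibit the premetric structure explicitly, constructing the finite-to-one quotient $\mathbf{f}_s\mapsto s$, $\mathbf{f}_{(x,P)}\mapsto x$ onto $m^{\leq\omega}$. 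This is a legitimate and arguably more transparent approach --- it identifies the metric quotient concretely rather than routing through the $F_\sigma$-characterization --- but it obliges you to specify the topology on $m^{\leq\omega}$ carefully: it must be the tree compactification in which finite nodes extending $x|_k$ accumulate at the branch $x$, not the disjoint union of a discrete set and a Cantor set (which is not compact); your convergence description shows you have the right topology in mind, and one slick way to make all of this automatic is to observe that your map is just restriction of functions to the countable set $m^{<\omega}$, i.e.\ $f\mapsto f|_{m^{<\omega}}$, so it is continuous for the pointwise topology and its image is a compact subset of $2^{m^{<\omega}}$, hence metrizable. On the negative half you follow the same idea as the paper, which merely says the statement ``follows from the fact that $K_\infty(\mathfrak{Q})$ has a point $\mathbf{g}_\infty$ that is not a $G_\delta$-point''; your contribution is to make explicit the suppressed step (finite fibers over a metrizable image force every point to be $G_\delta$, via the elementary fact that a point of a finite $G_\delta$ set in a Hausdorff space is itself $G_\delta$), which is exactly the argument behind the paper's later remark that all premetric compacta of finite degree are first countable.
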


\begin{proof}
The first statement follows from Proposition~\ref{premetricdegree} because the sets $V_t$, $W_t$ and $K\setminus W_t$ in the first part of the proof of Theorem~\ref{odegcomputation} are all clopen. The second statement follows from the fact that $K_\infty(\mathfrak{Q})$ has a point $\mathbf{g}_\infty$ that is not $G_\delta$-point.
\end{proof}

A compact space $K$ is a premetric compactum of degree $n$ if it is a premetric compactum of degree at most $n$ but not a premetric compactum of at most $n-1$. The main result of \cite{APT} is that, for every $n$, there exist two Rosenthal premetric compacta $S_n(I)$ and $D_n(2^\mathbb{N})$ of degree $n$ such that every separable Rosenthal premetric compactum of degree $n$ contains a homeomorphic copy of either $S_n(I)$ and $D_n(2^\mathbb{N})$. Although there is some superficial similarity, the result from \cite{APT} is not deduced from the results of this paper, nor vice-versa. It is worth to notice some differences:
\begin{itemize}
\item The spaces $S_n(I)$ and $D_n(2^\mathbb{N})$ from \cite{APT} are not separable, so that is not a \emph{basis} result,
\item While $S_n(I)$ and $D_n(2^\mathbb{N})$ are just two spaces for every $n$, the number of spaces in the list of Theorem~\ref{finitebasis} increases with $n$,
\item All premetric compacta of finite degree are first countable. However, non-first countable spaces, like the $K_\infty(\mathfrak{Q})$, may have finite open degree.
\item Even for first-countable separable Rosenthal compacta, the two degrees might be quite different. For example, let $\mathfrak{P}=\{P_1,\ldots,P_{m-1}\}$ be a partition of $m\times m$ where $P_k = \{(i,j) : \max\{1,i,j\} = k\}$. Inside $K_1(\mathfrak{P})$, consider
\begin{eqnarray*}
L &=& \overline{\{\mathbf{f}_{s^\frown i} : s\in 2^{<\omega}, i\in m\}}\\
 &=& \{\mathbf{f}_{s^\frown i} : s\in 2^{<\omega}, i\in m\} \cup \left\{\mathbf{f}_{(x,P)} : x\in 2^\omega, P\in\mathfrak{P}\right\}
\end{eqnarray*}
We have that $odeg(L)=2$. As countable family of open sets to witness this, we can take $$\mathcal{F} = \{W_t,L\setminus W_t, L\cap V_t, U_i : t\in 2^{<\omega},i\in \{2,\ldots,m\}\}$$ where $W_t$ and $V_t$ are as in the first part of the proof of Theorem~\ref{odegcomputation}, and $$U_i = \{\mathbf{f}_{s^\frown i} : s\in 2^{<\omega}\} \cup \left\{\mathbf{f}_{(x,P_i)} : x\in 2^\omega\right\}.$$
On the other hand, $L$ is a premetric compactum of degree $m-1$. In fact $\left\{\mathbf{f}_{(x,P)} : x\in 2^\omega, P\in\mathfrak{P}\right\}$ is homeomorphic to the space $D_{m-1}(2^\mathbb{N})$ of \cite{APT}.

\end{itemize}

\section{The main result}\label{sectionmain}

\begin{lem}\label{findseparatedK}
Let $K$ be a separable Rosenthal compact space with $odeg(K)\geq n$ and $D$ a countable dense subset of $K$. Then there exists an injective map $d:n^{<\omega}\To D$ such that
$$(\star\star)\ \ \ \overline{\{d[t] : t^\frown i \leq z\}} \cap \overline{\{d[t] : t^\frown j \leq z\}} = \emptyset$$
for all $z\in n^{\omega}$ and all $i<j<n$.
\end{lem}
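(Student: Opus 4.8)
The plan is to prove the lemma by an infinite game, converting the hypothesis $odeg(K)\ge n$ into a winning condition for the player who builds the tree. First I would record what $odeg(K)\ge n$ means: by Definition~\ref{defodeg} and Lemma~\ref{separationofclosed} it is exactly the statement that \emph{no} countable family $\mathcal{F}$ of open sets separates every $n$-tuple, i.e.\ for every countable $\mathcal{F}$ there are distinct $x_0,\dots,x_{n-1}\in K$ admitting no $V_0,\dots,V_{n-1}\in\mathcal{F}$ with $x_l\in V_l$ and $\bigcap_l V_l=\emptyset$. I would also reduce, via Lemma~\ref{densesetofcontinuous}, to the case where $D$ is a countable set of continuous functions on $\omega^\omega$, so that all relevant open sets come from a fixed countable basis and every move is taken from a countable set. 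It is worth noting at the outset why a naive level-by-level construction fails: a single value $d(s)$ belongs to the colour class $B_i(z):=\{d(z|_k):z_k=i\}$ for whichever child the branch $z$ takes at $s$, so $d(s)$ lies in \emph{every} colour class as $z$ varies; hence no static assignment of the points $d(t)$ to $n$ fixed pairwise disjoint open ``funnels'' can witness $(\star\star)$, and the separation must be produced branch by branch. This is what forces a dynamic argument.

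Accordingly I would define a game $\mathcal{G}$ of length $\omega$ in which, at round $k$, the Builder reveals $d$ on the nodes of level $k+1$ (choosing points of $D$ and keeping $d$ injective) together with, for each newly created node $t=s^\frown i$, a basic open set $U_t$ with $d(s)\in U_t$; the Obstructor responds with finitely many basic open sets, thereby accumulating a family $\mathcal{F}_\tau$. The Builder's commitments $U_t$ are required to shrink along each branch and to be arranged so that, for every branch $z$ and every pair $i\neq j$, the Builder has declared disjoint open sets capturing the tails of $B_i(z)$ and $B_j(z)$. The payoff set for the Builder is precisely that the resulting $d$ satisfies $(\star\star)$; thanks to these round-by-round neighbourhood commitments this becomes a condition checkable along each branch from the countable record of the play, hence Borel. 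Since the move sets are countable and the payoff is Borel, $\mathcal{G}$ is determined by Martin's theorem.

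It then remains to exclude an Obstructor win. If the Obstructor had a winning strategy $\tau$, then, because the game tree is countably branching, the open sets that $\tau$ ever plays form a single countable family $\mathcal{F}=\mathcal{F}_\tau$. I would show that $\mathcal{F}$ separates every $n$-tuple $x_0,\dots,x_{n-1}$ of distinct points, contradicting $odeg(K)\ge n$: given such a tuple, use that $K$ is Fr\'echet--Urysohn (Pol's theorem, together with Lemma~\ref{biseqblocks}) to choose $D$-sequences converging to the $x_l$, and let the Builder play so as to realise these as the $n$ colour-limits along a single branch; since $\tau$ wins, the sets it played must already separate the tuple, producing $V_0,\dots,V_{n-1}\in\mathcal{F}$ with $x_l\in V_l$ and $\bigcap_l V_l=\emptyset$. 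Hence the Obstructor cannot win, so by determinacy the Builder wins, and a Builder-winning play yields the desired $d$. The passages between the open-set side and the sequence side would be handled throughout by Lemma~\ref{continuousondense} and Lemma~\ref{biseqblocks}.

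The main obstacle I anticipate is reconciling the uncountable ``for all branches'' quantifier in $(\star\star)$ with a game presented over countable move sets and required to have Borel payoff. The device above---forcing the Builder to commit, round by round, to shrinking open neighbourhoods that pre-separate the colour classes along the branch currently being extended---is exactly what localises $(\star\star)$ to each branch and renders the payoff Borel; arranging these commitments to be simultaneously honourable for all branches (so that the Builder's winning set is nonempty whenever separation is genuinely possible) and strong enough that a win delivers disjoint \emph{closures} rather than merely disjoint open neighbourhoods is the delicate point. This last upgrade, from neighbourhood separation to closure separation, is where the bisequentiality of $K$ (every cluster point is a sequential limit) is essential.
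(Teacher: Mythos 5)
Your high-level instinct matches the paper's: Lemma~\ref{findseparatedK} is indeed proved by an infinite game, Martin's Borel determinacy, and the conversion of a winning strategy for the ``bad'' player into a countable family of open sets contradicting $odeg(K)\geq n$. But your concrete game breaks at three points. First, the reduction ``so that all relevant open sets come from a fixed countable basis'' is false: after Lemma~\ref{densesetofcontinuous} the space $K$ sits in $\mathbb{R}^{\omega^\omega}$ with the pointwise topology, which has no countable basis unless $K$ is metrizable, i.e.\ unless $odeg(K)=1$. The paper's game is designed exactly around this obstacle: Player I plays, besides a point $d_k\in D$, finite sequences $s_k^{ij}\in\omega^{<\omega}$ approximating reals $x^{ij}\in\omega^\omega$, and separation is expressed by rational cuts of the values $d_k(x^{ij})$; this codes the (uncountably many potential) separating open sets into countably many legal moves, and the continuity of the functions $d_k$ is what makes the winning condition Borel. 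Your ``basic open sets $U_t$'' have no countable pool to be drawn from, and your payoff $(\star\star)$ --- a statement about closures in $K$ quantified over all uncountably many branches --- is not visibly Borel; the ``commitment'' device is not worked out, and it is hard to see how it could be, since the colour of a point $d(s)$ depends on the branch through $s$, so the colour classes interleave along each branch and cannot be captured by open sets attached once and for all to nodes.

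Second, and more fundamentally, your game is degenerate as described: the Obstructor's moves (the open sets accumulating into $\mathcal{F}_\tau$) do not occur in the declared payoff, which mentions only the Builder's $d$. Then ``Obstructor has a winning strategy'' just means ``no Builder play satisfies $(\star\star)$,'' determinacy is vacuous, and your contradiction step --- ``since $\tau$ wins, the sets it played must already separate the tuple'' --- does not follow from the stated winning condition. In the paper the opponent plays the tree directions $p_k\in n$, so that a single play corresponds to a single branch, $d[t]$ is defined as Player I's strategic answer to the finite colour sequence $t$, and the payoff is the branch-local separation $(\star)$. This is what makes the two players genuinely interact, and it is also what makes the extraction of the countable family possible: when Player II (the colour player) has a winning strategy, the sets $V[\xi,(s^{ij}),p]=K\setminus\overline{D[\xi,(s^{ij}),p]}$, indexed by finite positions $\xi$ and the strategy's responses $p$, form a countable family witnessing $odeg(K)<n$; the open sets come from the fibers of the strategy, not from moves of either player. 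Finally, your closing claim that bisequentiality is ``essential'' for upgrading from neighbourhood separation to disjoint closures is misplaced: in the paper this upgrade is automatic, because a rational gap $q<q'$ between the values of the two colour classes at the point $x^{ij}$ already separates their pointwise closures; bisequentiality is not used in this lemma at all.
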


\begin{proof}
By Lemma~\ref{densesetofcontinuous}, we can suppose that $K$ is a compact set of first Baire class on the Polish space $\omega^\omega$ with a dense countable set of continuous functios $D\subset K$. We will further assume that $f(r_0,r_1,r_2,\ldots) = -f(r_0+1,r_1,r_2,\ldots)$ for all $f\in K$ and all $(r_0,r_1,\ldots)\in \omega^\omega$. This can be easily done by adding a new coordinate to $\omega^\omega$ at the beginning. The reason for this is to avoid consideration of too many cases in the future because now we will have that if $f,g\in K$ and $f\neq g$, then there exists $x\in\omega^\omega$ such that $f(x)<g(x)$.

We consider an infinite game of two players in the sense of \cite[Section 20]{Kechris}. At stage $k<\omega$ Player I plays $(d_k,s^{ij}_k: i<j<n)$ where $d_k\in D$, $s^{ij}_k\in \omega^{<\omega}$ and, if $k>0$, $s^{ij}_{k-1}< s^{ij}_k$. Player II responds with an integer $p_k\in \{0,\ldots,n-1\}$. At the end of the game, consider $x^{ij}\in \omega^\omega$ to be the branch determined by the $s^{ij}_k$'s. Player I wins if and only if for every $i<j<n$ there exist two rational numbers $q<q'$ and a natural number $k_0$ such that 
$$ (\star) \ \ \{d_k(x^{ij}) : k>k_0, p_k=i\} < q <q' < \{d_k(x^{ij}) : k>k_0, p_k=j\}$$ 

Since the $d_k's$ are continuous, the last statement $(\star)$ can be rephrased by saying that for every $k>k_0$ there there exists a further $k_1>k_0$ such that $d_k(x)<q$ whenever $s^{ij}_{k_1}\leq x$, $p_k=i$, and such tht $d_k(y)>q'$ whenever $s^{ij}_{k_1}\leq y$, $p_k=j$. Rephrased in this way, it is clear that this is a Borel game. By Martin's Theorem, cf. \cite[Theorem 20.5]{Kechris}, one of the two players has a winning strategy.

If Player II has a winning strategy, then we claim that $odeg(K)<n$. For every finite partial round of the game $\xi = (d_0,s_0^{ij},p_0,\ldots,d_k,s_k^{ij},p_k)$, every $(s^{ij} : i<j<n)$ with $s_k^{ij}<s^{ij}$ and every $p\in\{0,\ldots,n-1\}$, consider $D[\xi,(s^{ij}),p]$ the set of all $d\in D$ such that Player II, according to its strategy, plays $p$ after $\xi$ is played and Player I plays  $(d,s^{ij},i<j<n)$. We claim that the countable family $\mathcal{F}$ of all open sets of the form $$V[\xi, (s^{ij}), p] = K \setminus \overline{D[\xi,(s^{ij}),p]}$$
witnesses that $odeg(K)<n$. So, let us take $f_0,\ldots,f_n\in K$, and we shall find disjoint neighborhoods from $\mathcal{F}$. For every $i<j$ we can find $x^{ij}\in \omega^{\omega}$ and $q_{ij} < q'_{ij}$ such that $f_i(x^{ij}) < q_{ij} < q'_{ij} < f_j(x^{ij})$. 

For every $i$, let $$W(i) = \{f\in K : f(x^{ij})<q_{ij} \text{ for } j>i\text{ and } f(x^{ji}) > q'_{ji} \text{ for }j<i\}$$

If Player I is able to play all the time in such a way that $s^{ij}_k = x^{ij}|_k$, and, when Player II is playing according to its strategy, $d_k \in W(p_k)$, then Player I will win, a contradiction. Therefore Player I cannot play like this all the time, and this implies  that there is a finite round of the game $\xi = (d_0,s_0^{ij},p_0,\ldots,d_k,s_k^{ij},p_k)$ where $s_l^{ij} = x^{ij}|_ l$ such that
$$D[\xi,(x^{ij}|_{k+1}),p] \cap W(p) = \emptyset$$
for every $p\in n$. Since $W(p)$ is open, this implies that
$$\overline{D[\xi,(x^{ij}|_{k+1}),p]} \cap W(p) = \emptyset,$$
hence
$$W(p) \subset V[\xi,(x^{ij}|_{k+1}),p].$$
On the one hand, $f_p\in W(p)$  and on the other hand $\bigcup_{p<n} D[\xi,(x^{ij}|_{k+1}),p] = D$, therefore $\bigcap_{p<n}V[\xi,(x^{ij}|_{k+1}),p]  = \emptyset$. This finishes the proof that $odeg(K)<n$. 

So we suppose that Player I has a winning strategy. For every $t = (t_0,\ldots,t_k)\in n^{<\omega}$, we consider $(d[t], s^{ij}[t])$. the $k$-th move of Player I according to its strategy after Player II has played $t_0,t_1,\ldots,t_k$. Since Player I always wins when playing with his strategy, using property $(\star)$ after Player II plays the integers in any $z \in n^\omega$, we get
$$(\star\star)\ \ \ \overline{\{d[t] : t^\frown i \leq z\}} \cap \overline{\{d[t] : t^\frown j \leq z\}} = \emptyset.$$ 

This is not the end because $d:n^{<\omega}\To D$ might not be injective. However, we can define a first-move embedding $\sigma:n^{<\omega}\To n^{<\omega}$ inductively such that $\sigma(t^\frown i) = \sigma(t)^\frown i ^\frown s_t$ where $s_t$ is chosen so that $d(\sigma(t^\frown i))$ is different from all previously defined values of $d\circ \sigma$. Notice that $(\star\star)$ implies that such an $s_t$ must exist. In this way $d\circ \sigma:n^{<\omega}\To n^{<\omega}$ would be a new function which still satisfies $(\star\star)$ and is moreover injective.
\end{proof}

\begin{lem}\label{findstandardK}
For every injective funciton $d:n^{<\omega}\To K$ from $n^{<\omega}$ into a Rosenthal compactum $K$ 
there exists a first-move embedding $\sigma:n^{<\omega}\To n^{<\omega}$  such that
\begin{enumerate}
\item either the bijection $d(\sigma(t))\mapsto \mathbf{f}_t$ extends to a homeomorphism between $\overline{d(\sigma(n^{<\omega}))}$ and a space $K_1(\mathfrak{P})$ for some partition $\mathfrak{P}$ of $n\times n$
\item or the bijection $d(\sigma(t))\mapsto \mathbf{g}_t$ extends to a homeomorphism between $\overline{d(\sigma(n^{<\omega}))}$ and a space $K_\infty(\mathfrak{Q})$ for some family $\mathfrak{Q}$ of pairwise disjoint subsets of $n$.
\end{enumerate}
\end{lem}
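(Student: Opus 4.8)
The plan is to first normalise the ambient space and then extract, by an iterated application of the strong Ramsey theorem (Theorem~\ref{strongRamsey}), a finite amount of discrete ``convergence data'' from $d$, which will determine both which of the two alternatives holds and the exact partition $\mathfrak{P}$ or family $\mathfrak{Q}$. Using Lemma~\ref{densesetofcontinuous} I would first represent $K$ as a pointwise compact set of first Baire class functions on $\omega^\omega$ with $d(n^{<\omega})$ consisting of continuous functions; this makes all the comparisons below testable pointwise at rational thresholds, and hence with the property of Baire. Since $K$ is a separable Rosenthal compactum it is bisequential, and by Lemma~\ref{densecombs} every infinite subset of $n^{<\omega}$ contains a comb, so the whole convergence behaviour of $d$ is controlled by what happens along combs.

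The engine is the following sequence of finite-valued colourings, each applied to one of the $\approx$-classes of $(i,j)$-combs, $(i,j,k,l)$-double combs on a common branch, and $(u,v)$-splitted double combs on different branches, after each of which I pass to a first-move subtree via Theorem~\ref{strongRamsey}, composing the resulting first-move embeddings into the final $\sigma$. First, for each incidence, I would stabilise whether two $(i,j)$-combs over a common branch have a common limit, so that after restriction there is a well-defined limit point attached to each incidence over each branch; the pathological alternative of uncountably many limits on a single branch is ruled out by bisequentiality through Lemma~\ref{biseqblocks}. Next, the same-branch colouring ``do the $(i,j)$- and $(k,l)$-parts converge to the same point?'' produces an equivalence relation on $n\times n$, i.e. a partition $\mathfrak{P}$; and the split colouring ``do two combs over different branches converge to the same point?'' decides the global dichotomy.

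If combs over different branches never merge, I would set $\mathfrak{P}$ to be the partition just obtained and define the dense bijection $d(\sigma(t))\mapsto \mathbf{f}_t$. By Proposition~\ref{convergenceK1}, for combs $\{s_n\}$ and $\{t_n\}$ the limits of $\{\mathbf{f}_{s_n}\}$ and $\{\mathbf{f}_{t_n}\}$ coincide exactly when the branches coincide and the incidences lie in the same block of $\mathfrak{P}$; reducing arbitrary convergent sequences to combs by Lemma~\ref{densecombs}, this is precisely the relation forced on the $d$-side by the Ramsey stabilisation, so neither bad alternative of Lemma~\ref{continuousondense} can occur for the extension $\phi$ or for $\phi^{-1}$, and the bijection extends to a homeomorphism onto $K_1(\mathfrak{P})$. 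In the merging case there is a point that is a limit of off-diagonal combs over all branches; I would check, again via Lemma~\ref{biseqblocks}, that it is unique and non-$G_\delta$, read off $\mathfrak{Q}$ from the diagonal incidences whose combs remain branch-dependent, and conclude the homeomorphism onto $K_\infty(\mathfrak{Q})$ in the same way using Proposition~\ref{convergenceKinfty}.

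I expect the main obstacle to be the bookkeeping in the merging case: one must show that the common limit of off-diagonal combs is a single non-$G_\delta$ point and, crucially, that no off-diagonal incidence can remain branch-dependent once some merging occurs, so that the extracted data is genuinely a legitimate family $\mathfrak{Q}$ of disjoint subsets of $n$ rather than some hybrid matching neither canonical space. This is exactly where bisequentiality, through the block Lemma~\ref{biseqblocks}, has to do the heavy lifting, together with the care needed to verify that every colouring used has the property of Baire so that Theorem~\ref{strongRamsey} can be applied.
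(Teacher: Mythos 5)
Your overall architecture coincides with the paper's proof: represent $K$ via Lemma~\ref{densesetofcontinuous} so that convergence of $\{d_t : t\in a\}$ becomes a coanalytic, hence Baire-measurable, property of the set $a$; stabilize via Theorem~\ref{strongRamsey} the convergence of $(i,j)$-combs, same-branch double combs and splitted double combs; read off a partition $\mathfrak{P}$ of $n\times n$ from the induced equivalence relation; split into the non-merging case (giving $K_1(\mathfrak{P})$) and the merging case (giving $K_\infty(\mathfrak{Q})$); and transfer the homeomorphism through Lemmas~\ref{continuousondense} and~\ref{densecombs} together with Propositions~\ref{convergenceK1} and~\ref{convergenceKinfty}. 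Two small corrections: the well-definedness of the limit attached to an incidence over a branch does not come from bisequentiality --- once all $(i,j)$-combs converge, any two $(i,j)$-combs over the same branch interleave into a third $(i,j)$-comb containing infinite subsets of both, so their limits agree; and nothing about $G_\delta$-points needs to be verified, since the statement only asks for a homeomorphism with $K_\infty(\mathfrak{Q})$.

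The one genuine gap is exactly the step you defer: showing that once a single $(u,v)$-splitted $(i,j,k,l)$-double comb type merges, \emph{every} off-diagonal comb (after passing to a further first-move subtree) converges to one and the same point, so that the limit data is genuinely of $K_\infty$ type. Announcing that Lemma~\ref{biseqblocks} ``has to do the heavy lifting'' is not an argument; this construction is the real content of the merging case. The paper does it as follows. Restrict to the subtree $T_u$ of nodes extending $u$, and fix one $(k,l)$-comb $c$ inside $T_v$; since any $(i,j)$-comb $a\subset T_u$ combines with $c$ into a convergent splitted double comb, all $(i,j)$-combs in $T_u$ share a common limit $\mathbf{h}_\infty$, independently of their branch. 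Now take any $(i',j')$-comb $\{t_q : q<\omega\}$ over a branch $z$ in $T_u$ with $i'\neq j'$, and above each node $t_q$ choose an $(i,j)$-comb $\{t_{qp} : p<\omega\}$; each sequence $\{d_{t_{qp}}\}_p$ converges to $\mathbf{h}_\infty$, so Lemma~\ref{biseqblocks} (applied with the constant sequence $\mathbf{h}_\infty$) yields blocks converging to $\mathbf{h}_\infty$, and one can select $p[q]$ so that $\{t_{qp[q]} : q<\omega\}$ is again an $(i',j')$-comb over $z$. Its image then converges both to $\mathbf{h}_\infty$ and to the limit attached to $(z,i',j')$, forcing these to coincide. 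Only after this propagation is $\mathfrak{Q}$ well defined (as the classes of diagonal incidences whose limits remain branch-dependent, i.e.\ distinct from $\mathbf{h}_\infty$), and only then does the transfer to $K_\infty(\mathfrak{Q})$ go through as you describe.
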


\begin{proof}
Again, by Lemma~\ref{densesetofcontinuous}, we can suppose that $D = d(n^{<\omega})$ is a family of continuous functions on $\omega^\omega$ and $K$ is its pointwise closure. The image of a $t\in n^{<\omega}$ under the function $d$ will be denoted by $d_t$, while $d_t(x)$ will be the evaluation of the function $d_t$ on some $x\in \omega^\omega$. First of all, observe that the family
$$\mathfrak{A} = \{a\subset n^{<\omega} : \{d_t : t\in a\}\text{ is a convergent sequence}\}$$
is a coanalytic family of subsets of $n^{<\omega}$.
This is because $a\in \mathfrak{A}$ if and only if for every $x\in \omega^\omega$ and for every rational $\varepsilon>0$ there exists a finite set $F\subset n^{<\omega}$ such that $|d_t(x) - d_s(x)|<\varepsilon$ for all $t,s\in a\setminus F$, and we are supposing that the functions $d_t$ are continuous on $\omega^\omega$.  In particular $\mathfrak{A}$ is Baire-measurable, so we can apply Theorem~\ref{strongRamsey}, and we conclude that for any given infinite $A\subset n^{<\omega}$ there exists a first move subtree $T_A\subset n^{<\omega}$ such that either $\{d_t : t\in a\}$ is convergent whenever $A\approx a\subset T_A$ or $\{d_t : t\in a\}$ is never convergent whenever  $A\approx a\subset T_A$. By passing to a first-move subtree after applying this principle finitely many times, we can suppose that:

\begin{enumerate}
\item For every $i,j,k,l\in n$, either $\{d_t : t\in a\}$ converges for every $(i,j,k,l)$-double comb $a$, or $\{d_t : t\in a\}$ never converges for any $(i,j,k,l)$-doublecomb $a$.
\item  For every $i,j,k,l,u,v\in n$, either $\{d_t : t\in a\}$ converges for every $(u,v)$-split $(i,j,k,l)$-double comb $a$, or $\{d_t : t\in a\}$ never converges for any $(u,v)$-split $(i,j,k,l)$-doublecomb $a$.
\item For every $i,j\in n$, the sequence $\{d_t : t\in a\}$ converges for every $(i,j)$-comb $a$.
\end{enumerate}  

Notice that in the case of $(i,j)$-combs the case of never convergence cannot occur because any infinite subset of an $(i,j)$-comb is an $(i,j)$-comb, and every sequence in $K$ has a convergent subsequence.

We claim that for every $y\in n^\omega$ and for every $(i,j)\in n\times n$, the limit $\lim\{d_t : t\in a\}$ is the same for all $(i,j)$-combs over the branch $y$, and we can call this limit $\mathbf{h}_{(z,i,j)}$. This follows from the fact that if we have two $(i,j)$-combs $a$ and $b$ over the same branch, then it is easy to construct a new $(i,j)$-comb of the form $a'\cup b'$ where $a'\subset a$ and $b'\subset b$ are infinite.

We consider an equivalence relation on $n\times n$ by declaring that $(i,j)\sim (k,l)$ if $\{d_t : t\in a\}$ converges for all $(i,j,k,l)$-double combs. This is an equivalence relation because we can produce an $(i,j,k.l)$-double comb $a$ and an $(k,l,p,q)$-double comb $b$ that contain a common $(k,l)$-comb. Thus, if the first two double combs were convergent, they have to converge to the same limit, and then we can produce an $(i,j,p,q)$-double comb that converges to that same limit by joining an $(i,j)$-comb inside $a$  and a $(p,q)$-comb inside $b$. 

Let $\mathfrak{P}$ be the partition of $n\times n$ associated to the equivalence relation $\sim$. It is clear that $\mathbf{h}_{(z,i,j)} = \mathbf{h}_{(z,k,l)}$ if $(i,j)\sim (k,l)$. So we can rename the point $\mathbf{h}_{(z,i,j)}$ as $\mathbf{h}_{(z,P)}$, where $P\in\mathfrak{P}$ is the equivalence class for which $(i,j)\in P$. We distinguish two cases.

\textbf{Case 1.} For all $i,j,k,l,u,v\in n$ with $u\neq v$, the sequence $\{d_t : t\in a\}$ never converges for any $(u,v)$-split $(i,j,k,l)$-doublecomb $a$. First of all, this implies that $\mathbf{h}_{(x,P)} \neq \mathbf{h}_{(y,Q)}$ whenever $x\neq y$, because in other case, we cound construct a $inc(x,y)$-split $(i,j,k,l)$-double comb $a$ such that $\{d_t : t\in a\}$ converges. We claim that the bijective map $d_t \mapsto \mathbf{f}_t$ extends to a homeomorphisms between their closures. By Lemma~\ref{continuousondense}, if the map $d_t \mapsto \mathbf{f}_t$ does not extend to a continuous function between their closures, then there exist two sequences $a,b\subset n^{<\omega}$ such that $\{d_t : t\in a\}$ and $\{d_t : t\in b\}$ converge to the same point but $\{\mathbf{f}_t : t\in a\}$ and $\{\mathbf{f}_t : t\in b\}$ converge to different points. We can suppose, by Lemma~\ref{densecombs} that $a$ is an $(i,j)$-comb over some $x$ and $b$ is a $(k,l)$-comb over some $y$. Since $\{d_t : t\in a\}$ and $\{d_t : t\in b\}$ converge to the same point, we have that $x=y$ and there exists $P$ such that $(i,j),(k,l)\in P$. This implies that $\{\mathbf{f}_t : t\in a\}$ and $\{\mathbf{f}_t : t\in b\}$ also converge to the same point, by Proposition~\ref{convergenceK1}. Exactly the same argument shows that the inverse map $\mathbf{f}_t\To d_t$ extends to a continuous function between their closures, hence this continuous function is a homeomorphism.

\textbf{Case 2.} There exist $i,j,k,l,u,v\in n$ with $u\neq v$ such that the sequence $\{d_t : t\in a\}$ converges for any $(u,v)$-split $(i,jk,l)$-doublecomb $a$. In this case, we restrict to a further first-move subtree $T_u= \{u^\frown t : t \in n^{<\omega}\}$. Fix a $(k,l)$-comb $c$ inside $T_v = \{v^\frown t : t \in n^{<\omega}\}$. For every $(i,j)$-comb $a$ inside $T_u$ it is possible to find  infinite $a'\subset a$ and $c'\subset c$ such that  $a'\cup c'$ is a $(u,v)$-split $(i,j,k,l)$-double comb. Therefore, we conclude that for all $(i,j)$-combs $a\subset T_u$, the sequence $\{d_t : t\in a\}$ converges to the same point, that we call $\mathbf{h}_\infty$. We claim that, indeed, every $(\tilde{\i},\tilde{\j})$-comb inside $T_u$ for any $\tilde{\i}\neq \tilde{\j}$, the set $\{d_t : t\in a\}$ converges to $\mathbf{h}_\infty$. In other words, we claim that $\mathbf{h}_{(z,\tilde{\i},\tilde{\j})} = \mathbf{h}_\infty$ whenever $\tilde{\i}\neq \tilde{\j}$. To see this, let $\{t_q : q<\omega\}$ be an $(\tilde{\i},\tilde{\j})$-comb over a branch $z$. For every $q<\omega$, let $\{t_{qp} : p<\omega\}$ be an $(i,j)$-comb such that $t_q < t_{qp}$ por all $p$. Since each sequence $\{t_{qp} : p<\omega\}$ converges to $\mathbf{h}_\infty$, we can apply Lemma~\ref{biseqblocks} and we obtain infinite $N,N_q\subset\omega$ such that the sets $\{\{d_{t_{qp}} : p\in N_q\} : q\in N\}$ converge to $\mathbf{h}_\infty$. It is possible to pick $p[q]\in N_q$ such that $\{t_{qp[q]} : q<\omega\}$ is an $(\tilde{\i},\tilde{\j})$-comb over $z$. This shows that  $\mathbf{h}_{(z,\tilde{\i},\tilde{\j})} = \mathbf{h}_\infty$. 

For $p,q\in n$ we have an equivalence relation $p\sim q$ induced by the equivalence relation defined on pairs before: $p\sim q$ if and only if $(p,p)\sim (q,q)$. Notice that if $\mathbf{h}_{(z,p,p)} = \mathbf{h}_\infty$ for some $z$ then $\mathbf{h}_{(z,p,p)} = \mathbf{h}_\infty$ for all $z$. This is because if $\mathbf{h}_{(z,p,p)} = \mathbf{h}_\infty$ then there is an $(i,j,p,p)$-double comb $a$ for which $\{d_t :t\in a\}$ is convergent and then $\{d_t : t\in a\}$ is convergent for every $(i,j,p,p)$-comb $a$. Let $\mathfrak{Q}$ be the set of the equivalence classes of $\sim$ on $n$ with the exception of the equivalence class of all $p$ such that $\mathbf{h}_{(z,p,p)} = \mathbf{h}_\infty$. We claim that the map $d_t\mapsto \mathbf{g}_t$ induces a homeomorphism between the closures. The proof is similar as in Case 1, since now we have a complete control on the convergence of all combs. If this map did not induce a continuous function between the closures, then we would have $\{d_t : t\in a\}$ and $\{d_t : t\in b\}$ that converge to the same point but $\{\mathbf{g}_t : t\in a\}$ and $\{\mathbf{g}_t : t\in b\}$ that converge to different points. We can suppose that $a$ is a $(p,q)$-comb over $x$ and $b$ is a $(p',q')$-comb over $y$, so that $\{d_t : t\in a\}$ converges to $\mathbf{h}_{(x,p,q)}$ and $\{d_t : t\in b\}$ converges to $\mathbf{h}_{(x,p',q')}$. But the discussion above implies that $\mathbf{h}_{(x,p,q)} = \mathbf{h}_{(x,p',q')}$ if and only if $\{\mathbf{g}_t : t\in a\}$ and $\{\mathbf{g}_t : t\in b\}$ converge to the same point according to Lemma~\ref{convergenceKinfty}. The same argument shows that the inverse mapping $\mathbf{h}_t \mapsto d_t$ also extends to a cotinuous function between their closures, so this continuous extension is a homeomorphism.
\end{proof}

Let $\mathfrak{Q}_2 = \{2\}$ be the trivial partition of $2=\{0,1\}$ into just one set, and for $n>2$, let $\mathfrak{Q}_n = \{\{0\},\{1\},\ldots,\{n-2\}\}$ be the partition of $n-1$ into singletons. In this way, $odeg(K_\infty(\mathfrak{Q}_n)) = n$.

\begin{thm}\label{mainthm}
Let $K$ be a separable Rosenthal compact space and $n$ a natural number. If $odeg(K)\geq n$ then  $K$ contains either a homeomorphic copy of $K_\infty(\mathfrak{Q}_n)$ or of $K_1(\mathfrak{P}_{f^\alpha})$ for some strong-dense-type $\alpha$ with $|\alpha|=n$.
\end{thm}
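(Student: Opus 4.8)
My plan is to feed the two extraction lemmas into each other and then let the separation property $(\star\star)$ dictate the combinatorics of the resulting partition or family. First I would fix a countable dense $D\subseteq K$ and apply Lemma~\ref{findseparatedK} to produce an injective $d:n^{<\omega}\To D$ satisfying $(\star\star)$; then I feed $d$ into Lemma~\ref{findstandardK} to obtain a first-move embedding $\sigma$ such that the closure of $d(\sigma(n^{<\omega}))$, a closed subspace of $K$, is homeomorphic either to some $K_1(\mathfrak{P})$ (Case 1) or to some $K_\infty(\mathfrak{Q})$ (Case 2). The first routine point to record is that $(\star\star)$ survives composition with $\sigma$: since $\sigma$ is a first-move embedding, $t^\frown i\leq z$ forces $\sigma(t)^\frown i\leq \sigma(z)$, so $\{d(\sigma(t)):t^\frown i\leq z\}\subseteq \{d(s):s^\frown i\leq \sigma(z)\}$, and $(\star\star)$ for $d$ along the branch $\sigma(z)$ yields $(\star\star)$ for $d\circ\sigma$ along $z$. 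Transporting along the homeomorphism, I obtain $\overline{\{\mathbf{f}_t:t^\frown i\leq z\}}\cap\overline{\{\mathbf{f}_t:t^\frown j\leq z\}}=\emptyset$ (respectively with $\mathbf{g}$) for every branch $z$ and all $i<j<n$.

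The core computation is that $\{\mathbf{f}_t:t^\frown i\leq z\}=\{\mathbf{f}_{z|_k}:z(k)=i\}$ is an $(i,i)$-sequence over $z$. By Proposition~\ref{convergenceK1} it converges to $\mathbf{f}_{(z,P_i)}$, where $P_i$ is the block of $\mathfrak{P}$ containing $(i,i)$; choosing a branch $z$ that takes every value infinitely often, $(\star\star)$ forces $P_i\neq P_j$ for $i\neq j$, so the $n$ diagonal pairs lie in $n$ distinct blocks and $|\mathfrak{P}|\geq n$. Lemma~\ref{minimal} then places a copy of $K_1(\mathfrak{P}_{f^\alpha})$ with $|\alpha|=n$ inside $K_1(\mathfrak{P})$, hence inside $K$, which settles Case 1. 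In Case 2 the same sequences converge, by Proposition~\ref{convergenceKinfty}, to $\mathbf{g}_{(z,P_i)}$ when $i$ lies in some block of $\mathfrak{Q}$ and to $\mathbf{g}_\infty$ otherwise; now $(\star\star)$ forces every block to be a singleton and at most one coordinate to miss $\bigcup\mathfrak{Q}$, so $|\mathfrak{Q}|\geq n-1$.

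It then remains, in Case 2, to locate $K_\infty(\mathfrak{Q}_n)$ inside this $K_\infty(\mathfrak{Q})$. For $n>2$ this is clean: $\mathfrak{Q}_n$ is $n-1$ singletons over an $(n-1)$-letter alphabet, so I select $n-1$ of the good coordinates and pass to the closure of the isolated points indexed by the corresponding sub-alphabet subtree; there the off-diagonal combs converge to $\mathbf{g}_\infty$ and the diagonal ones to the respective $G_\delta$-points, so this closure is a copy of $K_\infty(\mathfrak{Q}_n)$.

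The genuine obstacle is the case $n=2$, where $\mathfrak{Q}_2=\{\{0,1\}\}$ is a doubleton while $(\star\star)$ has just forced $\mathfrak{Q}$ to be singletons. The point is that $K_\infty(\{\{0,1\}\})$ cannot be obtained by any incidence-preserving (first-move) map, since it has two ``good'' comb directions feeding a single limit whereas $K_\infty(\{\{0\}\})$ has only one. I would resolve this by an explicit comb-splitting tree map, for instance $\iota(s_0,\dots,s_{k-1})=(0,s_0,0,s_1,\dots,0,s_{k-1})$, so that both the $0$-comb and the $1$-comb over each new branch land cofinally in one $(0,0)$-comb of $K_\infty(\{\{0\}\})$ and hence share a limit, while mixed combs still run to $\mathbf{g}_\infty$; Lemma~\ref{continuousondense} applied in both directions then upgrades $\mathbf{g}_t\mapsto \mathbf{g}_{\iota(t)}$ to a homeomorphic copy of $K_\infty(\mathfrak{Q}_2)$. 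Verifying this embedding, and in particular checking that the off-diagonal behaviour is genuinely preserved, is where I expect the only real work to lie.
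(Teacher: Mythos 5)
Your proposal is correct and follows essentially the same route as the paper's proof: apply Lemma~\ref{findseparatedK} and then Lemma~\ref{findstandardK}, use $(\star\star)$ to force the diagonal pairs $(i,i)$ into distinct blocks of $\mathfrak{P}$ (so $|\mathfrak{P}|\geq n$ and Lemma~\ref{minimal} applies) in Case 1, and to force $\mathfrak{Q}$ to consist of singletons with at most one letter of $n$ missing in Case 2, then restrict to an $(n-1)$-letter subtree when $n>2$. Your interleaving map for $n=2$ is the same device the paper uses (the paper takes $d_{(t_0,0,t_1,0,\ldots,t_k)}$, inserting $0$'s between the letters), merging the two diagonal comb directions into a single limit so that the closure becomes $K_\infty(\mathfrak{Q}_2)$.
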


\begin{proof}
After applying Lemma~\ref{findseparatedK} and then Lemma~\ref{findstandardK}, we obtain $\{d_t : t\in n^{<\omega}\}$ such that
$$(\star\star)\ \ \ \overline{\{d_t : t^\frown i \leq z\}} \cap \overline{\{d_t : t^\frown j \leq z\}} = \emptyset$$
for all $z\in n^{\omega}$ and all $i<j<n$, and 
\begin{enumerate}
\item either $d_t\mapsto \mathbf{f}_t$ induces a homeomorphism of $\overline{\{d_t : t\in n^{<\omega}\}}$ with a space $K_1(\mathfrak{P})$,
\item or $d_t\mapsto \mathbf{g}_t$ induces a homeomorphism of  $\overline{\{d_t : t\in n^{<\omega}\}}$ with a space $K_\infty(\mathfrak{Q})$.
\end{enumerate}
In the first case, property $(\star\star)$ implies that each $(i,i)$ lies in a different piece of the partition $\mathfrak{P}$ for each $i\in n$, so in particular $|\mathfrak{P}|\geq n$. Then, Lemma~\ref{minimal} implies that $K$ contains a copy of $K_1(\mathfrak{P}_{f^\alpha})$ for some $\alpha$ with $|\alpha|=n$. In the second case, property $(\star\star)$ implies that for $i\neq j$, $i$ and $j$ cannot be in the same set from $\mathfrak{Q}$. This leaves only two possibilities, either $\mathfrak{Q} = \mathfrak{Q}_{n+1}$ consists of all singletons of $n$, or $\mathfrak{Q}$ consists of all singletons of $n$ except one, and we can suppose without loss of generality that this missed singleton is $\{n-1\}$. If $n>2$, in both cases we obtain that $\overline{\{d_t : t\in (n-1)^{<\omega}\}}$ is homeomorphic to $\mathfrak{Q}_n$ and we are done. If $n=2$, then in both cases we obtain that $$\overline{\{d_{(t_0,0,t_1,0,\ldots,t_k)} : (t_0,t_1,\ldots,t_k)\in 2^{<\omega}\}}$$
is homeomorphic to $K_\infty(\mathfrak{Q}_2)$. In all cases the homeomorphism is easily checked as we have done several times before using Lemmas~\ref{continuousondense} and~\ref{convergenceKinfty}, as it is clear where each kind of $(i,j)$-comb converges.

\end{proof}

\section{Some classical compact spaces}\label{sectionclass}

Notice that if $|\mathfrak{P}|=1$ then $K_1(\mathfrak{P})$ has open degree 1, so it is metrizable. In fact, in that case $K_1(\mathfrak{P})$ is a zero-dimensional compact metrizable space, hence homeomorphic to a subspace of the Cantor set.

\begin{lem}\label{containsthecantor}
$K_1(\mathfrak{P})$ contains a homeomorphic copy of the Cantor set if and only if there exist $i\neq j$ such that $(i,j)$ and $(j,i)$ live in the same piece of the partition.
\end{lem}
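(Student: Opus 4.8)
The plan is to prove both implications from the description of convergence in $K_1(\mathfrak{P})$ given by Proposition~\ref{convergenceK1}, together with the structure of its points in Proposition~\ref{K1description}.

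For the easy implication, suppose $(i,j)$ and $(j,i)$ lie in a common piece $P$, with $i\neq j$. I would consider the set of branches $\{i,j\}^\omega\subseteq m^\omega$ using only the symbols $i$ and $j$, which is a copy of the Cantor set, and the map $\Psi(x)=\mathbf{f}_{(x,P)}$. Injectivity of $\Psi$ is immediate by evaluating at long initial segments of $x$. For continuity it suffices, the domain being metrizable, to check sequential continuity: if $x_k\to x$ in $\{i,j\}^\omega$, pass to a subsequence with constant incidence $inc(x,x_k)=(a,b)$; since $a\neq b$ and $a,b\in\{i,j\}$ we have $(a,b)\in\{(i,j),(j,i)\}\subseteq P$, so $\{x_k\}$ is an $(a,b)$-sequence over $x$ and Proposition~\ref{convergenceK1}(2) yields $\mathbf{f}_{(x_k,P)}\to\mathbf{f}_{(x,P)}$. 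As every subsequence has such a sub-subsequence with the same limit, $\Psi(x_k)\to\Psi(x)$. A continuous injection from a compact space into a Hausdorff space is a homeomorphism onto its image, which is then the desired Cantor set.

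For the converse I argue contrapositively: assuming $(i,j)$ and $(j,i)$ never share a piece, I show $K_1(\mathfrak{P})$ has no Cantor subset. A copy $C$ of the Cantor set is perfect, so it avoids the isolated points $\mathbf{f}_s$ and lies in $K_1(\mathfrak{P})'=\{\mathbf{f}_{(x,P)}\}$ by Proposition~\ref{K1description}. The projection $\pi(\mathbf{f}_{(x,P)})=x$ is continuous, since the preimage of a basic clopen cylinder $\{y:t\leq y\}$ of $m^\omega$ is the relatively clopen set $\{f:f(t)=1\}\cap K_1(\mathfrak{P})'$, and it is at most $|\mathfrak{P}|$-to-one. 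By a routine selection/category argument I may pass to a Cantor subset $C_1\subseteq C$ on which the piece label is a constant $P$; then $\pi|_{C_1}$ is injective, hence a homeomorphism onto a perfect set $B=\pi(C_1)\subseteq m^\omega$. The key consequence is that any convergent sequence $y_k\to x$ in $B$ lifts through this homeomorphism to $\mathbf{f}_{(y_k,P)}\to\mathbf{f}_{(x,P)}$, so comparing with Proposition~\ref{convergenceK1}(2) forces every incidence direction realized inside $B$ to belong to $P$; by hypothesis $P$ contains no reversed pair.

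It remains to derive a contradiction from a purely combinatorial statement, which I expect to be the main obstacle: every perfect $B\subseteq m^\omega$ contains a point at which some reversed pair $(a,b),(b,a)$ is realized. I would prove this on the pruned tree $T=T_B$ of initial segments, in which every node extends to a splitting node because $B$ is perfect. Call a pair $\{a,b\}$ \emph{dense below} a node $t$ if every $u\geq t$ in $T$ admits a splitting node $s\geq u$ having both $a$ and $b$ among its successor directions. The crux is that some pair is dense below some node: otherwise, enumerating the finitely many pairs $\rho_1,\dots,\rho_M$ and successively choosing $t\leq t^{(1)}\leq\cdots\leq t^{(M)}$ so that no splitting node above $t^{(i)}$ contains both elements of $\rho_i$, one reaches a node $t^{(M)}$ above which no splitting node contains any pair in its successor set, i.e. has at most one successor, contradicting that $t^{(M)}$ extends to a splitting node. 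Given $\{a,b\}$ dense below some $t_0$, I build a branch $x\in B$ by recursively locating splitting nodes with both $a$ and $b$ available and taking direction $a$ and direction $b$ alternately; this $x$ realizes both $(a,b)$ and $(b,a)$ within $B$, forcing both into $P$ and completing the contradiction.
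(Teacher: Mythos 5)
Your forward implication coincides with the paper's (it takes the same set $\{\mathbf{f}_{(z,P)} : z\in\{i,j\}^\omega\}$ and cites Proposition~\ref{convergenceK1}); your version merely fills in the routine continuity/injectivity details, correctly. The converse is where you genuinely diverge. The paper's converse is short but leans on its classification machinery: since $K_1$ of the one-piece partition of $\{0,1\}^2$ is compact, metrizable and zero-dimensional, it embeds into any Cantor set, so a Cantor subset of $K_1(\mathfrak{P}_{g^1})$ yields $K_1(\mathfrak{P}_{g^0})\subset K_1(\mathfrak{P}_{g^1})$ with $g^0$ constant; Lemma~\ref{homeomorphiccopy} (whose proof uses the Ramsey-type Theorem~\ref{strongRamsey}) then gives $\sigma\circ g^0\prec g^1$, and the definition of a reduction map forces $g^1(i,j)=g^1(j,i)$ because $\varepsilon(0,1)$ and $\varepsilon(1,0)$ are automatically a reversed pair. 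Your route avoids the Ramsey theorem and the reduction-map calculus altogether: the Cantor set must lie in the derived set, the finite-to-one continuous projection $\pi$ pushes it to $m^\omega$, and your pruned-tree lemma (whose statement and proof are correct) produces a single branch realizing a reversed pair of incidences, which Proposition~\ref{convergenceK1}(2) plus uniqueness of limits turns into a contradiction. This is a more self-contained and elementary argument, which is a real merit.

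There is, however, one step you cannot wave through: ``by a routine selection/category argument I may pass to a Cantor subset $C_1\subseteq C$ on which the piece label is a constant $P$.'' Category arguments apply only to sets with the Baire property, and a finite partition of a Cantor set into \emph{arbitrary} pieces need not have any piece containing a perfect set (think of a Bernstein set and its complement). The label classes $C_P$ are not obviously Borel: the label of $f$ can only be read off from the value of $f$ at the moving coordinate $(\pi(f),P)$, and such diagonal evaluations are not continuous --- indeed, by Proposition~\ref{convergenceK1}(2) the label is not preserved under limits, so $C_P$ is not even closed. The step is repairable, and with an idea you already use later: by Proposition~\ref{convergenceK1}(2) and uniqueness of limits, the label of $f\in C$ is the unique piece of $\mathfrak{P}$ containing every incidence $(a,b)$ \emph{realized} at $f$, i.e.\ such that some sequence $f_k\to f$ in $C$ satisfies $inc(\pi(f),\pi(f_k))=(a,b)$ for all $k$. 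The set of $f$ realizing a fixed $(a,b)$ is $G_\delta$ in $C$, and every $f\in C$ realizes some incidence (the fibers of $\pi$ are finite and $C$ is perfect), so each $C_P$ is a finite union of $G_\delta$ sets, hence Borel, and only now does Baire category legitimately yield a Cantor subset with constant label. (Alternatively, a Borel selection for the finite fibers of $\pi|_C$, being continuous on a comeager set, gives a Cantor subset on which $\pi$ is injective; constant labels are then not even needed, since your final contradiction only requires that both incidences realized at the single branch $x$ lie in the label of the one point above $x$.) With this repair your proof is complete.
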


\begin{proof}
If $(i,j),(j,i)\in P$ live in the same piece $P$ of the partition, then  $\{\mathbf{f}_{(z,P)} : z\in \{i,j\}^\omega\}$ is homeomorphic to the Cantor set. This is easy to check using Proposition~\ref{convergenceK1}. Conversely, if $K_1(\mathfrak{P})$ contains a copy of the Cantor set, then it contains a copy of $K_1(\{0,1\}^2)$. Suppose that $\mathfrak{P} = \mathfrak{P}_{g^1}$ for some function $g^1$ and $g^0$ is the constant function equal to 0 on $\{0,1\}^2$. So we have $K_1(\mathfrak{P}_{g^0}) \subset K_1(\mathfrak{P}_{g^1})$. By Lemma~\ref{homeomorphiccopy}, we get that there exists $\sigma$ such that $\sigma\circ g^0 \prec g^1$. That is, there is a constant function on $\{0,1\}^2$ that is $\prec$-below $g^1$. Just using the definition, it is easy to check that this implies that there exists $(i,j)$ such that $g^1(i,j) = g^1(j,i)$. If otherwise, $g^1(i,j) \neq g^1(j,i)$ for all $i\neq j$, then that would imply that $g^0(0,1)\neq g^0(1,0)$.
\end{proof}

Consider the compact space  $S = [0,1]\times \{0,1\}$ endowed with the order topology induced by the lexicographical order. The split interval , also known as the double arrow space, is the space
$S' =S\setminus \{(0,0),(1,1)\}$ that we get after removing the two isolated points from $S$. It is not a difficult excercise to check that, for every perfect set $A\subset [0,1]$, the space $S(A) = A\times \{0,1\}\subset S$ contains a homeomorphic copy of $S'$. Indeed, every closed subset of $S$ without isolated points is order-isomorphic and homeomorphic to $S'$. We shall consider the space $S(2^\omega) = 2^\omega\times \{0,1\}$ where $2^\omega$ is identified with the Cantor set inside $[0,1]$, so that its order is the lexicographical order.

\begin{lem}\label{splitK1}
If $g:\{0,1\}^2\To \{0,1\}$ is such that $g(0,1)\neq g(1,0)$ and $\mathfrak{P}_g \neq \{\{(0,0),(1,0)\},\{(1,1),(0,1)\}\}$, then $K_1(\mathfrak{P}_g)$ is homeomorphic to a subspace of the split interval.
\end{lem}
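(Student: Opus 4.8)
The plan is to reduce to a single ``model'' partition and then embed that model into the split interval. First I would enumerate the possibilities for $\mathfrak{P}_g$. Relabelling the two values of $g$ changes neither the partition $\mathfrak{P}_g$ nor the space $K_1(\mathfrak{P}_g)$, so I may assume $g(0,1)=0$ and $g(1,0)=1$. Then $g$ is determined by the pair $(g(0,0),g(1,1))\in\{0,1\}^2$, giving exactly four partitions of $\{0,1\}^2$. The hypothesis excludes precisely the case $(g(0,0),g(1,1))=(1,0)$, i.e. the partition $\{\{(0,1),(1,1)\},\{(0,0),(1,0)\}\}$: this is the ``Alexandroff duplicate'' type, where the two limit points over a branch are distinguished by the direction of the \emph{approaching} sequence rather than of the branch, and it is this feature that obstructs a linear (hence split-interval) representation. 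The three remaining cases are $g=g_B$ with $g_B(i,j)=i$ (partition $\{\{(0,0),(0,1)\},\{(1,0),(1,1)\}\}$), and the two ``$3$--$1$'' partitions $g_A$ with $(g(0,0),g(1,1))=(0,0)$ and $g_D$ with $(g(0,0),g(1,1))=(1,1)$.

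The heart of the argument is to embed $K_1(\mathfrak{P}_{g_B})$ into $S'$. I would work inside $S(2^\omega)=2^\omega\times\{0,1\}$ and send each isolated point $\mathbf{f}_s$ $(s\in 2^{<\omega})$ to the isolated point $\iota(s)=(s^\frown 1 0^\infty,0)$ of $S(2^\omega)$; the map $\iota$ is injective and its values are exactly the isolated points of ``lower'' type. The key computation, via Proposition~\ref{convergenceK1} together with the observation that in the lexicographic order $s^\frown 1 0^\infty$ lies just to the right of any branch continuing with $0$ at position $|s|$ and just to the left of any branch continuing with $1$, is that $\iota$ carries an $(i,j)$-comb over $x$ to a sequence converging to $(x,1)$ when $i=0$ and to $(x,0)$ when $i=1$; crucially this side depends only on the first coordinate $i$ (the two ``diagonal'' incidences $(0,0),(1,1)$ fall into line with $(0,1),(1,0)$ respectively). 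Since for $\mathfrak{P}_{g_B}$ two combs converge to the same point of $K_1$ iff they sit over the same branch and share the same first coordinate, the map $\mathbf{f}_s\mapsto\iota(s)$ and its inverse both fail alternative (2) of Lemma~\ref{continuousondense} (using that $K_1(\mathfrak{P}_{g_B})$ and $S'$ are separable Rosenthal, hence bisequential, as are the relevant closed subspaces, and pre-selecting convergent comb subsequences by Lemma~\ref{densecombs}). Hence $\iota$ extends to a homeomorphism of $K_1(\mathfrak{P}_{g_B})$ onto $\overline{\iota(2^{<\omega})}$. I would then check that eventually constant branches contribute only one of their two copies, matching exactly the isolated points of $S(2^\omega)$, so that $\overline{\iota(2^{<\omega})}$ misses the endpoints $(0^\infty,0)$ and $(1^\infty,1)$ of $S$ and is therefore a closed subspace of $S'$.

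For the remaining two cases I would invoke Lemma~\ref{fprecg}: it suffices to produce reduction maps witnessing $g_A\prec g_B$ and $g_D\prec g_B$, since then $K_1(\mathfrak{P}_{g_A})$ and $K_1(\mathfrak{P}_{g_D})$ embed as closed subspaces of $K_1(\mathfrak{P}_{g_B})$, hence of $S'$. Concretely, $g_A=g_B\circ\varepsilon$ for the reduction map given by $e(0)=(0,0)$, $e(1)=(1,0)$, $x=(1)$, and $g_D=g_B\circ\varepsilon$ for $e(0)=(0,1,0)$, $e(1)=(1,0,0)$, $x=(0,0)$; in each case one verifies that the four incidences $inc(e(u),e(v))$ and $inc(e(u),x)$ land in the piece of $\mathfrak{P}_{g_B}$ prescribed by $g_A$, respectively $g_D$.

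I expect the main obstacle to be the bookkeeping in the second paragraph: correctly assigning the two diagonal incidences $(0,0)$ and $(1,1)$ (the prefix approaches, which carry no obvious side) to the left/right order-convergence of $S(2^\omega)$, and verifying that the eventually constant branches produce exactly the isolated points of $S(2^\omega)$, so that the image lands in $S'$ rather than accidentally hitting a removed endpoint. This is precisely where the excluded partition $\{\{(0,0),(1,0)\},\{(1,1),(0,1)\}\}$ breaks down, since there the piece is governed by the \emph{second} coordinate of the incidence, which $\iota$ cannot linearise.
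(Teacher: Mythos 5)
Your proof is correct, but it reaches the lemma by a partly different route than the paper, so a comparison is in order. The paper writes down one explicit map $\phi:K_1(\mathfrak{P}_g)\To S(2^\omega)$ covering all three admissible partitions at once: each limit point $\mathbf{f}_{(x,g^{-1}(i))}$ is sent to $((x_0,0,1,x_1,0,1,\ldots),i)$ --- the interleaving forces every image branch to have infinitely many $0$'s and $1$'s, so both twins over an image branch are genuine limit points and the endpoints of $S$ are never approached --- while the isolated points $\mathbf{f}_s$ go to interleaved branches with one of three case-dependent tails ($1^\infty$, $0^\infty$ or $(01)^\infty$); the verification is then exactly the comb-matching you perform, via Proposition~\ref{convergenceK1} and Lemma~\ref{continuousondense}. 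You instead verify a single case, $g_B(i,j)=i$, by hand with the non-interleaved placement $\iota(s)=(s^\frown 1 0^\infty,0)$, and obtain the two ``3--1'' partitions from it through the reduction relation and Lemma~\ref{fprecg}; your two reduction maps are correct (all four incidences check out in each case, e.g. $inc((0,0),(1))=(0,1)$ and $inc((1,0),(1))=(0,0)$ give $g_B\circ\varepsilon=g_A$, and $inc((0,1,0),(0,0))=inc((1,0,0),(0,0))=(1,0)$ give $g_B\circ\varepsilon=g_D$). What each approach buys: the paper's interleaving trick uniformizes the branch types and so needs no discussion of eventually constant branches, but requires writing (and checking) three embedding formulas; your scheme is more modular --- only one convergence computation is ever done, the rest being inherited from machinery the paper has already proved --- at the price of exactly the endpoint bookkeeping you flag. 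On that point your description contains a small, harmless inaccuracy: an eventually-zero branch $x=t^\frown 1 0^\infty$ contributes \emph{both} of its twins to $\overline{\iota(2^{<\omega})}$, namely $(x,0)=\iota(t)$ as an isolated point and $(x,1)$ as the limit of combs with first coordinate $0$ (it is the eventually-one branches that lose a twin). What your argument actually needs is only that $(0^\infty,0)$ and $(1^\infty,1)$ avoid the closure, and this is immediate: both are isolated in $S(2^\omega)$ and neither lies in $\iota(2^{<\omega})$. With that observation your proof is complete as proposed.
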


\begin{proof}
We consider a homeomorphic embedding $\phi:K_1(\mathfrak{P}) \To S(2^\omega)$ defined as follows. For $x=(x_0,x_1,\ldots)\in 2^\omega$, let $$\phi(\mathbf{f}_{(x,g^{-1}(i))} )= ((x_0,0,1,x_1,0,1,x_2,0,1\ldots),i).$$ For $s=(s_0,\ldots,s_k)\in 2^{<\omega}$, we define

$\phi(\mathbf{f}_s) = \psi(s_0,0,1,s_1,0,1,\ldots,s_k,1,1,1,1,1,\ldots)$ if $g(0,0) = g(1,1) = g(0,1)$,

$\phi(\mathbf{f}_s) = \psi(s_0,0,1,s_1,0,1,\ldots,s_k,0,0,0,0,0,\ldots)$ if $g(0,0) = g(1,1) = g(1,0)$,

$\phi(\mathbf{f}_s) = \psi(s_0,0,1,s_1,0,1,\ldots,s_k,0,1,0,1,0,\ldots)$ if $g(0,0) = g(0,1)$ and $g(1,1) = (1,0)$.

Using Proposition~\ref{convergenceK1} and Lemma~\ref{continuousondense}, one easily checks that this is continuous one-to-one map.
\end{proof}

\begin{lem}\label{containsthesplit}
$K_1(\mathfrak{P})$ contains a homeomorphic copy of the split interval if and only if there exist $i\neq j$ such that $(i,j)$ and $(j,i)$ live in different pieces of the partition $\mathfrak{P}$.
\end{lem}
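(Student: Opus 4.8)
The plan is to prove both implications, treating the easy sufficiency direction by a direct construction and the necessity direction by reduction to the already-established Lemmas~\ref{splitK1} and~\ref{homeomorphiccopy}.

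For sufficiency, suppose $(i,j)\in P$ and $(j,i)\in P'$ with $P\neq P'$, and assume without loss of generality $i<j$. I would work directly inside $K_1(\mathfrak{P})$ rather than through Lemma~\ref{splitK1}, whose conclusion points the wrong way. Set $B=\{i,j\}^\omega\subset m^\omega$, a perfect set which I identify with the Cantor set inside $[0,1]$ via the lexicographic order ($i<j$), and put $L=\{\mathbf{f}_{(z,P)}:z\in B\}\cup\{\mathbf{f}_{(z,P')}:z\in B\}$. The key observation is that any two distinct branches of $B$ have incidence either $(i,j)$ or $(j,i)$. Hence, by Proposition~\ref{convergenceK1}(2) together with Lemma~\ref{densecombs}, every convergent sequence drawn from $L$ has a limit of the form $\mathbf{f}_{(z,P)}$ or $\mathbf{f}_{(z,P')}$, again in $L$; so $L$ is sequentially closed, and since $K_1(\mathfrak{P})$ is Fr\'echet-Urysohn, $L$ is closed and thus compact. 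Moreover an $(i,j)$-sequence over $z$ consists of branches $z_k>z$ and converges to $\mathbf{f}_{(z,P)}$, while a $(j,i)$-sequence consists of $z_k<z$ and converges to $\mathbf{f}_{(z,P')}$, whatever the labels attached to the $z_k$. This is exactly the two-sided double-arrow behaviour, so I would verify that $h(\mathbf{f}_{(z,P)})=(z,1)$ and $h(\mathbf{f}_{(z,P')})=(z,0)$ defines a homeomorphism of $L$ onto the closed subspace $S(B)$ of $S$. Since $B$ is perfect, $S(B)$ contains a homeomorphic copy of the split interval by the remark preceding Lemma~\ref{splitK1}, and therefore so does $K_1(\mathfrak{P})$. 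The hypothesis $P\neq P'$ is used precisely to guarantee $\mathbf{f}_{(z,P)}\neq\mathbf{f}_{(z,P')}$, i.e.\ genuine splitting (by the injectivity in Proposition~\ref{K1description}).

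For necessity I mirror the proof of Lemma~\ref{containsthecantor}. Fix the good partition $g_0:\{0,1\}^2\To\{0,1\}$ given by $g_0(0,0)=g_0(0,1)=0$ and $g_0(1,0)=g_0(1,1)=1$, which falls under Lemma~\ref{splitK1}, so that $K_1(\mathfrak{P}_{g_0})$ embeds into the split interval. Writing $\mathfrak{P}=\mathfrak{P}_f$, if the split interval embeds into $K_1(\mathfrak{P})$ then composing gives an embedding $K_1(\mathfrak{P}_{g_0})\hookrightarrow K_1(\mathfrak{P}_{f})$, and Lemma~\ref{homeomorphiccopy} supplies an injective $\sigma$ with $\sigma\circ g_0\prec f$. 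Unwinding $\prec$, there is a reduction map $\varepsilon$ realized by an injective $e:\{0,1\}\To m^k$ and some $x$, with $\sigma\circ g_0=f\circ\varepsilon$. Since $e(0)\neq e(1)$ have equal length they are incomparable, branching in directions $a\neq b$, so $\varepsilon(0,1)=inc(e(0),e(1))=(a,b)$ and $\varepsilon(1,0)=(b,a)$. As $\sigma$ is injective and $g_0(0,1)\neq g_0(1,0)$, we get $f(a,b)=(\sigma\circ g_0)(0,1)\neq(\sigma\circ g_0)(1,0)=f(b,a)$; that is, $(a,b)$ and $(b,a)$ lie in different pieces of $\mathfrak{P}$, the required pair with $i=a\neq b=j$.

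The ratio of difficulty is asymmetric: the necessity direction is essentially mechanical once $g_0$ is fixed, so the main obstacle lies in the sufficiency direction, namely checking that $h$ is a homeomorphism. This is the careful bookkeeping of orientations and of the closedness of $L$, including the behaviour at the extreme branches $iii\cdots$ and $jjj\cdots$, which produce isolated points of $L$ matching exactly the isolated top and bottom of $S(B)\subset S$. I would in particular double-check that no sequence from $L$ escapes to an isolated point $\mathbf{f}_s$ or to a label $Q\notin\{P,P'\}$; this is precisely the point where the ``arbitrary $P_k$'' clause of Proposition~\ref{convergenceK1}(2) is essential, since the branches $z_k$ approaching a given $z$ may carry either label.
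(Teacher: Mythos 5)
Your proof is correct and follows essentially the same route as the paper: for sufficiency, the paper also exhibits $\{\mathbf{f}_{(z,P)},\mathbf{f}_{(z,P')} : z\in\{i,j\}^\omega\}$ as a natural copy of $S(2^\omega)$ (verified via Proposition~\ref{convergenceK1}), and for necessity it uses the same $g^0(u,v)=u$ together with Lemmas~\ref{splitK1} and~\ref{homeomorphiccopy}. Your version merely spells out details the paper leaves implicit, such as the closedness of $L$, the orientation of the two labels, and the unwinding of $\prec$ via the incidence $(a,b)$ versus $(b,a)$ of $e(0)$ and $e(1)$.
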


\begin{proof}
If $(i,j)\in P$ and $(j,i)\in P'$ live in different pieces of the partition, then  $\{\mathbf{f}_{(z,P)}, \mathbf{f}_{(z,P')} : z\in \{i,j\}^\omega\}$ is naturally homeomorphic to $S(2^\omega)$. Hence that set contains a copy of the split interval. The homeomorphism is easy to check using Proposition~\ref{convergenceK1} since in both spaces, the topology is determined by convergent sequences. If $K_1(\mathfrak{P})$ contains a copy of the split interval, then by Lemma~\ref{splitK1} it also contains copy of $K_1(\mathfrak{P}_{g^0})$ where $g^0:\{0,1\}^2\To \{0,1\}$ is $g^0(i,j) = i$.  Suppose that $\mathfrak{P}$ is of the form $\mathfrak{P} = \mathfrak{P}_{g^1}$, so by Lemma~\ref{homeomorphiccopy} there exists $\sigma$ such that $\sigma\circ g^0 \prec g^1$. But from the definition of $\prec$ it easily follows that if $g^1(i,j) = g^1(j,i)$ for all $i,j$, the same would happen for $g^0$.
\end{proof}

\begin{thm}\label{nonscattered}
Let $K$ be a Rosenthal compact space that is not scattered. Then $K$ contains either a homeomorphic copy of the Cantor set or a homeomorphic copy of the split interval.
\end{thm}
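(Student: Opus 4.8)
The plan is to reduce the problem to the standard spaces $K_1(\mathfrak{P})$ via Lemma~\ref{findstandardK}, arranging matters so that the \emph{scattered} alternative $K_\infty(\mathfrak{Q})$ cannot occur. First I would pass to the perfect kernel: since $K$ is not scattered, its Cantor--Bendixson process stabilizes at a nonempty closed perfect set $P=K^{(\infty)}$, which is again a Rosenthal compactum (being a closed subspace of one) and has no isolated points. It suffices to find a copy of the Cantor set or of the split interval inside $P$.

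The key construction is a \emph{branch-separating Cantor scheme} inside $P$. Using only that $P$ is a perfect compact Hausdorff (hence normal) space, I would build by recursion points $d_s\in P$ and nonempty open sets $U_s$, for $s\in 2^{<\omega}$, with $\overline{U_{s0}}\cup\overline{U_{s1}}\subseteq U_s$, $\overline{U_{s0}}\cap\overline{U_{s1}}=\emptyset$, with $d_t\in U_s$ whenever $t\geq s$, and with $d_s\notin\overline{U_{s0}}\cup\overline{U_{s1}}$ (possible since every nonempty open subset of $P$ is infinite). The condition $d_t\in U_s$ for $t\geq s$ makes $d$ injective and is the crucial property: if $z\in 2^\omega$ and $\{s_k\}$ is any $(i,j)$-comb over $z$, then $|s_k\wedge z|\to\infty$ forces $d_{s_k}\in U_{z|n}$ for each fixed $n$ and all large $k$, so every cluster point of $\{d_{s_k}\}$ lies in $\bigcap_n\overline{U_{z|n}}$. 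Hence for $z\neq y$, combs over $z$ and combs over $y$ cluster inside the disjoint sets $\overline{U_{z|N0}}$ and $\overline{U_{y|N1}}$ (with $N$ the splitting level), so they have disjoint sets of limits.

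Next I would feed the injective map $d:2^{<\omega}\to P\subseteq K$ into Lemma~\ref{findstandardK}, obtaining a first-move embedding $\sigma$ after which $\overline{d(\sigma(2^{<\omega}))}$ is homeomorphic either to some $K_1(\mathfrak{P})$ (Case 1) or to some $K_\infty(\mathfrak{Q})$ (Case 2). Since first-move embeddings carry distinct branches to distinct branches and combs to combs, the branch-separation of the previous paragraph survives, so limits of combs over distinct branches stay distinct. But Case 2 arises precisely from a convergent $(u,v)$-split double comb with $u\neq v$, i.e.\ an $(i,j)$-comb over one branch together with a $(k,l)$-comb over a \emph{different} branch converging to a common point --- exactly the branch collapse we have excluded. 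Therefore we must be in Case 1, and $\overline{d(\sigma(2^{<\omega}))}\cong K_1(\mathfrak{P})$ for some partition $\mathfrak{P}$ of $2\times 2$.

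Finally, the underlying tree is $2^{<\omega}$, so the only distinct pair is $(0,1),(1,0)$. If they lie in the same piece of $\mathfrak{P}$, Lemma~\ref{containsthecantor} yields a copy of the Cantor set; if they lie in different pieces, Lemma~\ref{containsthesplit} yields a copy of the split interval. Either copy sits inside $\overline{d(\sigma(2^{<\omega}))}\subseteq K$, completing the proof. The main obstacle is exactly the one resolved above: Lemma~\ref{findstandardK} by itself permits the scattered space $K_\infty(\mathfrak{Q}_2)$, which contains neither a Cantor set nor the split interval, so the whole point is to use non-scatteredness --- through a branch-separating Cantor scheme in the perfect kernel --- to force Case 1.
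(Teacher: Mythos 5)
Your proposal is correct, and its overall skeleton coincides with the paper's: produce an injective family $\{d_t : t\in 2^{<\omega}\}$ whose comb-limits over distinct branches are forced apart, feed it to Lemma~\ref{findstandardK}, use the separation to exclude the $K_\infty(\mathfrak{Q})$ alternative (in $K_\infty(\mathfrak{Q})$ every $(i,j)$-comb with $i\neq j$ converges to $\mathbf{g}_\infty$ regardless of the branch, which is exactly the collapse you rule out), and finish with the dichotomy of Lemmas~\ref{containsthecantor} and~\ref{containsthesplit} for partitions of $2\times 2$. Where you genuinely diverge is in how the branch-separation is obtained. The paper invokes the classical fact that a non-scattered compactum admits a continuous surjection $\psi:K\To [0,1]$, and takes $d_t$ to be preimages of the triadic Cantor points $z_t$, so that separation $(\clubsuit)$ falls out of the continuity of $\psi$ and the disjointness of the corresponding closed intervals. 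You instead pass to the perfect kernel and run a Cantor-scheme recursion using only regularity of a compact Hausdorff space and the fact that nonempty open subsets of a perfect space are infinite. Your route is more self-contained (it avoids citing the surjection theorem for non-scattered compacta), at the cost of losing the by-product the paper gets for free: since the paper's family is built from a surjection, its proof actually shows that any continuous surjection onto a non-scattered space restricts, on some closed subspace, to either a homeomorphism of Cantor sets or a copy of the canonical map $S(2^\omega)\To 2^\omega$ --- a refinement the paper reuses in Theorem~\ref{injectiveonsplit}, and which your construction does not directly yield. One small presentational remark: you justify the exclusion of Case 2 by appealing to the internal mechanism of the proof of Lemma~\ref{findstandardK} (convergent split double combs); it is cleaner, and equivalent, to argue from the statement of the lemma alone, as the pulled-back point $\mathbf{g}_\infty$ would lie in the closure of combs over every branch.
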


\begin{proof}
Since $K$ is not scattered, there exists a continuous surjection $\psi:K\To [0,1]$. For every $t=(t_1,\ldots,t_k)\in 2^{<\omega}$, let $z_t = \sum_{j=1}^{k} 2t_j 3^{-j}\in [0,1]$. In this way, $\overline{\{z_t : t\in 2^{<\omega}\}}$ is the Cantor set. For every $t\in 2^{<\omega}$, pick $d_t\in K$ such that $\psi(d_t) = z_t$. The key property of these points is that 
$$ (\clubsuit) \ \ \overline{\{d_t : s^\frown 0 \leq t\}} \cap \overline{\{d_t : s^\frown 1 \leq t\}} = \emptyset$$
for every $s\in 2^{<\omega}$. By Lemma~\ref{findstandardK}, we pass to a first-move subtree $T\approx 2^{<\omega}$ for which the natural bijection induces a homeomorphism between $\overline{\{d_t : t\in T\}}$ and the dense subset of a space of the form $K_1(\mathfrak{P})$ or $K_\infty(\mathfrak{Q})$ on the dyadic tree. However, property $(\clubsuit)$ eliminates the case of $K_\infty(\mathfrak{Q})$  because $\mathbf{g}_{\infty}$ would be in all those closures. By Lemmas~\ref{containsthecantor} and~\ref{containsthesplit}, the space $K_1(\mathfrak{P})$ contains a Cantor set if $(0,1)$ and $(1,0)$ lie in the same piece of the partition, and it contains a split interval if $(0,1)$ and $(1,0)$ lie in different pieces of the partition.
\end{proof}

Notice that the above proof shows something a litle bit stronger: if we have a continuous surjection from a Rosenthal compact space onto a non scattered space, then there is a closed subspace where the restriction is either a homeomorphism between Cantor sets or it looks like the canonical surjection $S(2^\omega)\To 2^\omega$.

A similar argument shows that if a Rosenthal compactum maps continuously onto the split interval, then it contains the split interval:

\begin{thm}\label{injectiveonsplit}
Let $K$ be a Rosenthal compact space and $\psi:K\To S$ be a continuous surjection from $K$ onto the split interval. Then, there exists a closed subset $Z\subset K$ homeomorphic to the split interval such that the restiction $\psi|_Z : Z\To L$ is one to one.
\end{thm}

\begin{proof}
Consider now $z^{-}_t = (z_t,0) \in S$ to be the left twin of the $z_t$ considered in the previous proof. We pick again $d_t$ such that $\psi(d_t) = z^{-}_t$, and we have the same key property $(\clubsuit)$ as above. This property also implies that $\psi\overline{\{d_t : t\in 2^{<\omega}\}}$ is uncountable. Using Lemma~\ref{findstandardK}, we suppose  that $\mathbf{f}_t\mapsto d_t$ induces a natural homeomorphism $$\phi: K_1(\mathfrak{P})\To \overline{\{d_t : t\in 2^{<\omega}\}}.$$ If $(0,1)$ and $(1,0)$ live in the same piece $P$ of the partition $\mathfrak{P}$,  then $\{\mathbf{f}_{(x,P)} : x\in 2^\omega\}$ would be homeomorphic to the Cantor set, but its image under $\psi\phi$ would be, by property $(\clubsuit)$, an uncountable subspace of the split interval. This is impossible because every metrizable closed subspace of the split interval is countable. So $(0,1)$ and $(1,0)$ lie in different pieces of the partition $P,P'\in \mathfrak{P}$. But then 
$$Z = \{\mathbf{f}_{(x,P)} : x\in 2^\omega\}\cup \{\mathbf{f}_{(x,P')} : x\in 2^\omega\}$$
is homeomorphic to $S(2^\omega)$ and $\psi$ is injective on $\phi Z$ by the choice of the elements $d_t$. 
\end{proof}

\section{Low degrees}\label{sectionlow}

In order togive the explicit list of the minimal separable Rosenthal compacta of degree $n$ we just have to enumerate all possible strong dense-types $\alpha = (A,B,C,D,E,\psi,\mathcal{P},\gamma)$ with $|\alpha|=n$, consider the corresponding $K_1(\mathfrak{P}_{f^\alpha})$ and adding $K_\infty(\mathfrak{Q}_n)$. The lists  of the strong types for $n=2$ and $n=3$ are found in \cite[Section 8]{stronggaps}, the dense-types being shorter sublists of them. In any case, this is an easy discussion of cases:

For $n=2$, we have, up to permutation, only two dense strong types $\alpha^0$ and $\alpha^1$:

$$
\begin{array}{|c|c|c|c|c|c|c|c|c|c|}
\hline
& m & A & B & C & D & E & \psi & \mathcal{P} & \gamma\\
 \hline
\alpha_2^0 & 2 &    &   & \{0,1\} &   &   &   & \{\{0,1\}\} &  \\
\hline
\alpha_2^1 & 2 & \{0\} &   & \{1\} &   &   &   & \{\{1\}\} &  \\
\hline
\end{array}
$$

The corresponding partitions of $2\times 2$ are $\mathfrak{P}_2^0 = \{\{ (0,0),(1,1),(1,0)\}, \{(0,1)\} \}$ and $\mathfrak{P}_2^1 = \{\{(0,0)\},\{(0,1),(1,0),(1,1)\}\}$. Thus, $K_1(\mathfrak{P}_2^0)$, $K_1(\mathfrak{P}_2^1)$ and $K_\infty(\mathfrak{Q}_2)$ form the basis of three elements for separable Rosenthal compact spaces.  By Lemmas~\ref{splitK1} and \ref{containsthesplit}, the space $K_1(\mathfrak{P}_2^0)$ both contains and is contained in the split interval. On the other hand, $\{\mathbf{f}_{(x,P)} : x\in 2^\omega, P\in \mathfrak{P}_2^1\}$ is homeomorphic to the so-called Alexandroff duplicate of the Cantor set, but this duplicate is non-separable while $K_1(\mathfrak{P}_2^1)$ is separable. These three minimal spaces are like the seven spaces of \cite{ADK}.

For $n=3$, the possible $\alpha$'s are:
$$
\begin{array}{|c|c|c|c|c|c|c|c|c|c|}
\hline
& m & A & B & C & D & E & \psi & \mathcal{P} & \gamma\\
 \hline
\alpha_3^0 & 2 & \{0\} &   & \{1,2\} &   &   &   & \{\{1,2\}\} &  \\
\hline
\alpha_3^1 & 3 & \{0\} &   & \{1,2\} &   &   &   & \{\{1\},\{2\}\} &  \\
\hline
\alpha_3^2 & 2 &  \{0,1\} & \{2\} &   &   &   & \equiv 2 &   &  \\
\hline
\end{array}
$$

\medskip
For $n=4$, the possible $\alpha$'s are (in this table, we omit some brackets):
$$
\begin{array}{|c|c|c|c|c|c|c|c|c|c|}
\hline
& m & A & B & C & D & E & \psi & \mathcal{P} & \gamma\\
 \hline
\alpha_4^0 & 3 &    &   &  0,1,2,3  &   &   &   &  \{0,1\},\{2,3\}   &  \\
\hline
\alpha_4^1 & 4 &  0  &   &  1,2,3  &   &   &   &  \{1\},\{2\},\{3\}  &  \\
\hline
\alpha_4^2 & 3 &  0  &   &  1,2,3  &   &   &   &  \{1\},\{2,3\}  &  \\
\hline
\alpha_4^3 & 3 &  0  &   &  &  1,2   &  3   &   &  & \equiv 3 \\
\hline
\alpha_4^4 & 2 &  0,1  &  2,3   &  &   &  & (0,1)\mapsto 2 &  &  \\
 &  &   &  &  &   &  &  (1,0)\mapsto 3  &  &  \\
\hline
\alpha_4^5 & 3 &  0,1  &  2   &  3   &   &  & \equiv 2 &  \{3\} &  \\
\hline
\alpha_4^6 & 3 &  0,1  &  2   &   &  3   &  & \equiv 2 &  & \equiv 2  \\
\hline
\alpha_4^7 & 3 &  0,1,2  &  3   &   &   &  & \equiv 3 &  &   \\
\hline
\end{array}
$$
\medskip

Thus there are three minimal separable Rosenthal compact spaces of open degree 2, four minimals of open degree 3 and eight minimals of open degree 4.

\section{Problems}\label{sectionprob}

The fact that Rosenthal compact spaces are sequentially compact can be rephrased by saying that a Rosenthal compact space is finite if and only if it does not contain homeomorphic copies of $K_\infty(\emptyset)$. One of the main results of \cite{1BC} can be reformulated in our language by saying that Rosenthal compact space is first countable if and only if it does not contain homeomorphic copies of $K_\infty(\mathfrak{Q}_2)$. In the same spirit, Corollary~\ref{nonscattered} can be restated by saying that
a Rosenthal compact space is scattered if and only if it does not contain copies of either $K_1(\mathfrak{P}_1)$ or $K_1(\mathfrak{P}_2)$, where $\mathfrak{P}_1 = \{ \{(0,0),(1,1),(0,1),(1,0)\} \}$ and $\mathfrak{P}_2 = \{\{(0,1)\},\{(1,0),(0,0),(1,1)\}\}$. All these results suggest a general problem: Given a fixed set of spaces of the form $K_1(\mathfrak{P})$ or $K_\infty(\mathfrak{Q})$, which is the class of Rosenthal compact spaces that do not contain any of them? Or from another point of view, which classes of Rosenthal compact spaces can be described as those that do not contain certain spaces  of the form $K_1(\mathfrak{P})$ or $K_\infty(\mathfrak{Q})$? For example, we do not know any characterisation of the class of Rosenthal compact spaces that do not contain the split interval.

\end{document}